\newtheorem{prop}{Proposition}[section]
\newtheorem{thm}[prop]{Theorem}
\newtheorem{lemma}[prop]{Lemma}
\newtheorem{cor}[prop]{Corollary}
\newtheorem{conj}[prop]{Conjecture}
\newtheorem{exa}[prop]{Example}
\newtheorem{rem}[prop]{Remark}
\newtheorem{defn}[prop]{Definition}
\newcommand{\NN}{\mathbb{N}}
\newcommand{\df}{:=}
\newcommand{\fL}{\mathfrak{L}}
\newcommand{\KD}{\mathsf{KD}}
\newcommand{\KKD}{\mathsf{KKD}}
\newcommand{\KT}{\mathsf{KT}}
\newcommand{\key}{\mathsf{key}}
\newcommand{\X}{\mathsf{X}}
\newcommand{\la}{\lambda}
\newcommand{\Label}{\mathsf{Label}}
\newcommand{\LL}{\mathsf{L}}
\newcommand{\KL}{\mathsf{K_-}}
\newcommand{\RSVT}{\mathsf{RSVT}}
\newcommand{\RSSYT}{\mathsf{RSSYT}}
\newcommand{\supp}{\mathsf{supp}}
\newcommand{\wt}{\mathsf{wt}}
\newcommand{\ex}{\mathsf{ex}}
\newcommand\setItemnumber[1]{\setcounter{enumi}{\numexpr#1-1\relax}}
\definecolor{darkred}{rgb}{0.7,0,0} % darkred color
\newcommand{\definition}[1]{{\color{darkred}\emph{#1}}} % emphasis of a definition
\title{A bijection between $K$-Kohnert diagrams and reverse set-valued tableaux}
\author[J.~Pan]{Jianping Pan}
\address[J. Pan]{Department of Mathematics, NC State University, Raleigh, NC 95616-8633, U.S.A.}
\email{jpan9@ncsu.edu}
\author[T.~Yu]{Tianyi Yu}
\address[T. Yu]{Department of Mathematics, UC San Diego, La Jolla, CA 92093, U.S.A.}
\email{tiy059@ucsd.edu}
\date{\today}
\keywords{Lascoux polynomials, key polynomials, Kohnert diagrams}
\subjclass[2020]{Primary 05E05}
\begin{document}
\maketitle
\begin{abstract}
Lascoux polynomials are $K$-theoretic analogues of the key polynomials. 
They both have combinatorial formulas involving tableaux:
reverse set-valued tableaux ($\RSVT$) rule for Lascoux polynomials
and reverse semistandard Young tableaux ($\RSSYT$) rule for key polynomials.
Furthermore, key polynomials have a simple algorithmic model in terms of Kohnert diagrams,
which are in bijection with $\RSSYT$.
Ross and Yong introduced $K$-Kohnert diagrams, 
which are analogues of Kohnert diagrams.
They conjectured a $K$-Kohnert diagram rule for Lascoux polynomials.
We establish this conjecture by constructing a weight-preserving bijection 
between $\RSVT$ and $K$-Kohnert diagrams.
\end{abstract}

\section{Introduction}
\label{section.intro}
Fix a positive integer $n$ throughout this paper. 
A \definition{weak composition of length $n$} 
is a sequence of $n$ non-negative integers.
If $\alpha$ is a weak composition,
we use $\alpha_i$ to denote its $i^{th}$ entry. 

Key polynomials $\kappa_\alpha$ are homogeneous polynomials labeled by weak compositions. They were first introduced by Demazure~\cite{De} as the characters of the Demazure modules. Further studies~\cite{Ko,LS1,LS2,RS,Kashiwara,L2,Littelmann,A,A2} provided several combinatorial formulas.

Lascoux polynomials $\fL^{(\beta)}_\alpha$ are $K$-theoretic generalizations of key polynomials~\cite{L}. They are inhomegeneous polynomials with an extra variable $\beta$. Setting $\beta = 0$
in $\fL^{(\beta)}_\alpha$ yields $\kappa_\alpha$. There are several existing combinatorial formulas for $\fL^{(\beta)}_\alpha$ involving set-valued skyline fillings and set-valued tableaux~\cite{BSW, Y}.
In this paper, we will define Lascoux polynomials by a combinatorial formula 
involving reverse set-valued tableaux ($\RSVT$).
It first appeared implicitly in~\cite{BSW} 
and was rediscovered by Shimozono and the second author~\cite{SY}.
Specifically, for each weak composition $\alpha$,
there is a set $\RSVT(\alpha)$, 
which consists of certain $\RSVT$ satisfying a left key condition 
(see subsection~\ref{background.tableaux}). 
Then $\fL^{(\beta)}_\alpha$ can be written as a sum over $\RSVT(\alpha)$:
\[
    \fL^{(\beta)}_\alpha \df \sum_{T\in \RSVT(\alpha)} \beta^{\ex(T)}x^{\wt(T)}\,.
\]

Ross and Yong~\cite{RossY} defined a generalization of 
Kohnert's move on diagrams~\cite{Ko}. 
We call them $K$-Kohnert moves. 
Repeatedly applying K-Kohnert moves on the key diagram of~$\alpha$ 
yields a set of diagrams, which is denoted as $\KKD(\alpha)$
(see subsection~\ref{background.diagrams}).

\begin{conj}~\cite{RossY} \label{Main conjecture} 
The Lascoux polynomials indexed by $\alpha$, is given by
\[
\fL^{(\beta)}_\alpha = \sum_{D \in \KKD(\alpha)}\beta^{\ex(D)}\bm{x}^{\wt(D)}\,.
\]
\end{conj}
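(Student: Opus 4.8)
The plan is to prove Conjecture~\ref{Main conjecture} by constructing an explicit weight-preserving bijection
\[
\Phi \colon \RSVT(\alpha) \longrightarrow \KKD(\alpha)
\]
satisfying $\ex(\Phi(T)) = \ex(T)$ and $\wt(\Phi(T)) = \wt(T)$ for every $T \in \RSVT(\alpha)$. Since the excerpt has already set up $\fLb_\alpha = \sum_{T \in \RSVT(\alpha)} \beta^{\ex(T)} x^{\wt(T)}$ as the definition of the Lascoux polynomial, exhibiting such a $\Phi$ immediately yields $\fLb_\alpha = \sum_{D \in \KKD(\alpha)} \beta^{\ex(D)} \bm{x}^{\wt(D)}$, which is the conjecture. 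So the entire content is the bijection.

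First I would recall (or reprove in this $K$-theoretic setting) the classical $\beta = 0$ story: there is a known bijection between $\RSSYT$ with the appropriate left-key condition and $\KKD(\alpha)$ built from iterated Kohnert moves, obtained by reading each diagram column by column and recording, in the tableau, which rows are occupied. My $\Phi$ will extend this: a box of $D \in \KKD(\alpha)$ in column $c$ sitting in row $r$ contributes an entry ``$r$'' to the $c$-th column of the tableau, and the \emph{extra} boxes created by $K$-Kohnert moves (the ones counted by $\ex(D)$) will correspond exactly to the secondary/bracketed entries in the set-valued cells, so that $\ex$ on both sides counts the same thing. Concretely I would define $\Phi^{-1}$ first: given $T \in \RSVT(\alpha)$, process its cells (say, from bottom-right, in a fixed reading order) and, for each entry $i$ in a cell of column $c$, perform the $K$-Kohnert move (or ghost-move) that places a box in column $c$, row $i$, starting from the key diagram of $\alpha$; the ``reverse'' nature of $\RSVT$ is what makes the entries decrease as required for legal Kohnert moves. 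Then I would show this sequence of moves is always legal and lands in $\KKD(\alpha)$, that distinct $T$ give distinct $D$, and that every $D$ arises this way, by running the inverse reading procedure on $D$.

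The main obstacle will be the two-sided well-definedness in the presence of the $K$-moves. On the forward side, I must show that the box-removal/reading procedure applied to an arbitrary $D \in \KKD(\alpha)$ always produces a filling that (a) is a genuine $\RSVT$ (columns strictly decreasing, rows weakly decreasing, the flag/shape conditions), and (b) satisfies the left-key condition defining membership in $\RSVT(\alpha)$ — this last point is the analogue of the classical fact that Kohnert diagrams of $\alpha$ are exactly those whose associated tableau has left key $\le$ the key of $\alpha$, and verifying it requires tracking how $K$-Kohnert moves affect the left key, likely via an induction on the number of moves together with a careful analysis of how a single (possibly ghost) move changes a column. On the backward side, I must show the move sequence coming from a given $T$ is legal regardless of the order in which cells are processed, or else fix a canonical order and prove the output is order-independent; a confluence/commutation lemma for $K$-Kohnert moves acting on disjoint columns, plus a ``leftmost-lowest first'' normalization, should handle this.

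I expect the proof to be organized as: (1) a precise description of the reading map $D \mapsto T_D$ and the move map $T \mapsto D_T$; (2) a lemma that $D \mapsto T_D$ and $T \mapsto D_T$ are mutually inverse as maps between diagrams-with-fixed-shape and fillings, ignoring the side conditions — this part is essentially bookkeeping; (3) the key structural lemma that $D \in \KKD(\alpha) \iff T_D \in \RSVT(\alpha)$, proved by induction on the number of $K$-Kohnert moves used to reach $D$, with the inductive step comparing left keys before and after a single move; and (4) the observation that both maps preserve $\wt$ (each box in row $r$ contributes one to $x_r$ on both sides) and $\ex$ (extra boxes match extra entries), completing the proof. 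Step (3) is the crux and where almost all of the real work lies; steps (1), (2), (4) are routine once the definitions are pinned down.
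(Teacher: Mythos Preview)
Your proposal has a genuine gap at the very first step: the ``reading map'' $D \mapsto T_D$ you describe (a box at $(c,r)$ contributes entry $r$ to column $c$; ghost cells become the extra entries) is not a bijection between $\KKD(\alpha)$ and $\RSVT(\alpha)$, and in fact is not even well-defined as a map into $\RSVT$. A ghost cell in a $K$-Kohnert diagram is always left \emph{above} the Kohnert cell that moved out of it, so in a typical $D \in \KKD(\alpha)$ the ghost cells in a column sit higher than many of the Kohnert cells. But an extra entry in an $\RSVT$ is by definition \emph{smaller} than the leading entry of its cell, so in the diagram-pair encoding the extra cells sit \emph{below} the leading cells. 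Already for $\alpha = (0,2,1)$ the diagram with Kohnert cells at $(1,1),(1,2),(2,2)$ and a ghost at $(1,3)$ lies in $\KKD(\alpha)$, yet your map would try to place $3$ as an extra entry in a column whose leading entries are $\{2,1\}$---not an $\RSVT$. The paper in fact displays, for this very $\alpha$, the six $\KKD$ elements with ghosts and the six $\RSVT$ elements with extras side by side as diagram pairs and points out that they are different sets; the identity on diagram pairs is not the answer.

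What the paper actually does is a nontrivial rearrangement of the Kohnert cells. Given $(K,G) \in \KKD(\alpha)$, it \emph{raises} cells of $K$ up through the ghost positions via an operator $\sharp_G$ on $\KD(\alpha)$: for each $(c,r) \in G$ (processed right-to-left, bottom-to-top within a column) one lifts the highest Kohnert cell in column $c$ at or below row $r$ that can be moved to $(c,r)$ while remaining in $\KD(\alpha)$. The resulting diagram $L = \sharp_G(K)$ supplies the leading entries, and the extras are $(K \sqcup G)\setminus L$. Proving that $\sharp_G$ is always defined on $\KKD(\alpha)$, lands in $\RSVT(\alpha)$, and is inverted by an analogous lowering operator $\flat_E$ requires a column-recursive characterization of both $\KKD(\alpha)$ and $\RSVT(\alpha)$ (via $\overline{M(\alpha,K_1)}$ and a Bruhat-type order on compositions), together with a delicate analysis using Kohnert labelings to show each single-cell lift $\sharp_g$ is well-defined and that these lifts commute appropriately. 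None of this machinery is present in your outline, and your step (3) induction on the number of $K$-Kohnert moves cannot start, because the underlying correspondence in steps (1)--(2) is already the wrong one.
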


Pechenik and Scrimshaw~\cite{PS} proved a special case
of this conjecture 
where all positive numbers in $\alpha$ 
are the same.
This paper will prove the conjecture for all $\alpha$.
\begin{thm}
Conjecture~\ref{Main conjecture} is true.
\end{thm}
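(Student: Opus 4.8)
The plan is to take the abstract at face value: since we have \emph{defined} $\fL^{(\beta)}_\alpha = \sum_{T \in \RSVT(\alpha)} \beta^{\ex(T)}\bm{x}^{\wt(T)}$, it suffices to construct a bijection $\Psi \colon \RSVT(\alpha) \to \KKD(\alpha)$ with $\ex(\Psi(T)) = \ex(T)$ and $\wt(\Psi(T)) = \wt(T)$ for every $T$. I would first pin down the $\beta = 0$ skeleton: specializing both sides recovers the classical Kohnert-diagram formula for $\kappa_\alpha$, and there is a known weight-preserving bijection $\psi_0 \colon \RSSYT(\alpha) \to \KD(\alpha)$ between reverse semistandard Young tableaux satisfying the left-key condition and ordinary Kohnert diagrams. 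The bijection $\Psi$ should restrict to $\psi_0$ on the ``ground layer,'' so I would either quote $\psi_0$ in a convenient form or re-derive it, making its column-by-column structure explicit.

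Next I would separate, on each side, the ground data from the ``extra'' data counted by $\ex$. Every $T \in \RSVT(\alpha)$ has an underlying ordinary reverse semistandard tableau $T^\flat$, obtained by keeping only the smallest entry of each box; one checks $T^\flat \in \RSSYT(\alpha)$ and that the discarded extra entries occupy a combinatorially controlled family of slots determined by $T^\flat$ (each extra entry is squeezed between the values of neighboring boxes). Dually, every $D \in \KKD(\alpha)$ should admit a canonical deflation $D^\flat \in \KD(\alpha)$ that removes the ``ghost'' cells left behind by $K$-Kohnert moves, with the ghost cells sitting in a controlled family of positions relative to $D^\flat$ (a ghost can only be deposited weakly above where its host cell descends). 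The heart of the argument is then: for a matched pair $T^\flat$ and $D^\flat = \psi_0(T^\flat)$, the legal extra-entry configurations on $T^\flat$ are in weight- and cardinality-preserving bijection with the legal ghost-cell configurations on $D^\flat$. I would prove this by giving both $\KKD(\alpha)$ and $\RSVT(\alpha)$ a parallel recursive description --- conditioning, say, on the bottom row for diagrams and on the largest entries / last column for tableaux --- and defining $\Psi$ along this recursion, so that each recursive step is a purely local move whose correctness is a finite check.

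The main obstacle, as I see it, is not the $x$-weight --- once the cell $\leftrightarrow$ entry dictionary is fixed, $\wt$ and $\ex$ are bookkeeping --- but establishing simultaneously that $\Psi$ lands in $\KKD(\alpha)$ and that $\Psi^{-1}$ lands in $\RSVT(\alpha)$. Membership in $\KKD(\alpha)$ is an \emph{existential} condition (the diagram must be reachable by \emph{some} sequence of $K$-Kohnert moves), whereas membership in $\RSVT(\alpha)$ is a \emph{global} condition (the left-key condition constrains the whole tableau); a naive ``one ghost cell per extra entry'' rule can easily produce an unreachable diagram, and a naive deflate-and-read-off can easily violate the left-key condition. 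So the crux will be isolating the right invariant --- a monotonicity or charge-type statement relating how far ghost cells can float up and left in a $K$-Kohnert sequence to how large extra entries can be in a left-key-compatible $\RSVT$ --- and showing it forces both inclusions. I expect the bulk of the technical work to be in proving that invariant, presumably by induction on the number of $K$-Kohnert moves (or on the number of columns) while carefully tracking the interplay between ghosting and the key condition, after which bijectivity and weight-preservation follow essentially formally.
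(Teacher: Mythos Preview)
Your high-level plan is right, and the instinct toward a column-by-column recursion matches the paper. But the specific decomposition you propose fails in two places. First, keeping the smallest entry of each box of $T \in \RSVT(\alpha)$ need not land in $\RSSYT(\alpha)$: for $\alpha = (0,2,1)$ take the $\RSVT$ of shape $(2,1)$ with row one $3,2$ and row two the set $\{2,1\}$; its left key equals $\key(\alpha)$, but your $T^\flat$ has row one $3,2$ and row two $1$, whose left key has a $3$ in column $2$ and hence fails $\KL(T^\flat) \leqslant \key(\alpha)$. (Keeping the \emph{largest} entry does give $\LL(T) \in \RSSYT(\alpha)$; that is the paper's deflation.) Second, and more fundamentally, even with the corrected deflation the fibered bijection you envision cannot exist, because the fibers have different sizes: for the same $\alpha$, the key diagram $D_\alpha$ admits no ghost cells at all (so its $\KKD$-fiber has size $1$), whereas four tableaux in $\RSVT(\alpha)$ satisfy $\LL(T) = \key(\alpha)$ (insert a $1$ into either, both, or neither of the two cells containing $2$). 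No weight-preserving fiber-by-fiber matching is possible.

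The paper's bijection is therefore not fibered over $\KD(\alpha) = \RSSYT(\alpha)$: encoding both sides as diagram pairs, it sends $(K,G) \in \KKD(\alpha)$ to $(L,E)$ where $L$ is obtained from $K$ by an operator $\sharp_G$ that \emph{raises} Kohnert cells within $\KD(\alpha)$ toward the ghost positions, and then $E = (K \sqcup G) \setminus L$; in general $L \neq K$. Your recursive idea survives---one peels off column $1$ and reduces to a smaller weak composition $\overline{M(\alpha,K_1)}$---but each recursive step genuinely rearranges the underlying Kohnert diagram rather than fixing it, and the technical core is showing that $\sharp_G$ and its inverse $\flat_E$ are well-defined on $\KD(\alpha)$ and undo each other, which the paper carries out via Assaf--Searles Kohnert labelings.
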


To prove this theorem, 
we define two maps: $\Psi_\alpha$ on $\KKD(\alpha)$
(see subsection~\ref{prelim.kkd.to.rsvt})
and $\Phi_\alpha$ on $\RSVT(\alpha)$ 
(see subsection~\ref{prelim.rsvt.to.kkd}).
We will show $\Psi_\alpha$ (resp. $\Phi_\alpha$)
is a well-defined map to $\RSVT(\alpha)$ 
(resp. $\KKD(\alpha)$).
Finally, we establish the following.

\begin{thm}
\label{T: Main}
The maps $\Psi_\alpha: \KKD(\alpha) \to \RSVT(\alpha)$ 
and $\Phi_\alpha: \RSVT(\alpha) \to \KKD(\alpha)$
are mutually inverses of each other.
Moreover, they preserve $\wt(\cdot)$ and $\ex(\cdot)$.
\end{thm}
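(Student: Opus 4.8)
The plan is to prove Theorem~\ref{T: Main} by analyzing both maps at the level of a single $K$-Kohnert move, rather than comparing the two global constructions directly. The key structural observation is that $\KKD(\alpha)$ carries a natural order: every diagram $D \in \KKD(\alpha)$ is obtained from the key diagram of $\alpha$ by a finite sequence of $K$-Kohnert moves, and we can induct on the length of such a sequence. Dually, the set $\RSVT(\alpha)$ should come equipped with (or be shown to admit) a matching notion of ``elementary step'' --- a local modification of a reverse set-valued tableau --- so that $\Psi_\alpha$ intertwines one $K$-Kohnert move with one such tableau step, and $\Phi_\alpha$ intertwines the reverse step with the reverse move.

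First I would verify the base case: the key diagram of $\alpha$ maps under $\Psi_\alpha$ to the unique ``bottom'' element of $\RSVT(\alpha)$ (the superstandard-type tableau with no extra entries, whose weight is $\alpha$), and $\Phi_\alpha$ sends that tableau back to the key diagram. This is a direct unwinding of the two definitions in subsections~\ref{prelim.kkd.to.rsvt} and~\ref{prelim.rsvt.to.kkd} and should be essentially immediate. Next I would establish the two \emph{compatibility lemmas}: (i) if $D' \in \KKD(\alpha)$ is obtained from $D \in \KKD(\alpha)$ by a single $K$-Kohnert move, then $\Psi_\alpha(D')$ is obtained from $\Psi_\alpha(D)$ by the corresponding elementary tableau step, and conversely $\Phi_\alpha$ undoes this; and (ii) these steps are invertible on the nose, so that $\Phi_\alpha \circ \Psi_\alpha = \id$ on each diagram reachable in $k$ moves follows from the same identity for diagrams reachable in $k-1$ moves. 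Because both a $K$-Kohnert move and its tableau counterpart change $\wt(\cdot)$ by moving a single cell/letter one row and change $\ex(\cdot)$ by $0$ or $1$ in a controlled way, the weight- and $\ex$-preservation claims fall out of the same local bookkeeping once the bijectivity is in hand; I would fold these into the compatibility lemmas rather than proving them separately.

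The surjectivity/well-definedness halves --- that $\Psi_\alpha$ actually lands in $\RSVT(\alpha)$ (the left-key condition is met) and that $\Phi_\alpha$ actually lands in $\KKD(\alpha)$ (the target diagram is $K$-Kohnert-reachable) --- are presumably discharged in the subsections cited above, so here I would only need: every $T \in \RSVT(\alpha)$ is reached from the bottom tableau by a sequence of elementary steps (an induction on $\ex(T)$ together with the number of cells strictly above their ``home row''), and each such step lifts through $\Phi_\alpha$ to a legal $K$-Kohnert move. Combining this with the compatibility lemmas gives $\Psi_\alpha \circ \Phi_\alpha = \id$ on all of $\RSVT(\alpha)$, and the two one-sided inverse statements together yield the theorem.

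The main obstacle I anticipate is precisely the matching of ``elementary steps'' on the two sides. A single $K$-Kohnert move can, because of the $K$-theoretic ghost/shadow cell it may create, correspond not to a single clean modification of a reverse set-valued tableau but to a short cascade of adjustments (moving an entry down one row, possibly splitting or merging a set-valued box, and re-validating semistandardness and the left-key condition along an entire column). Making the local dictionary precise --- identifying exactly which tableau operation is the image of which of the (two) types of $K$-Kohnert move, and checking that it is reversible with the expected effect on $\ex$ --- is where the real work lies; the inductive scaffolding around it is routine once that dictionary is nailed down.
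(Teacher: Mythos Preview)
Your plan is to induct on the number of $K$-Kohnert moves and show that $\Psi_\alpha$ intertwines a single move with some ``elementary tableau step'' on the $\RSVT$ side. You correctly identify that defining this step and proving the compatibility lemmas is ``where the real work lies,'' but you never supply it --- and this is a genuine gap, not a routine detail. The map $\Psi_\alpha$ is defined globally: $\sharp_G$ processes \emph{all} ghost cells in a fixed right-to-left, bottom-to-top order, and there is no a priori reason its output changes in a local, invertible way when one extra ghost cell is created by a single $K$-Kohnert move. Adding a ghost at $(c,r)$ can alter which Kohnert cell gets raised at every later step in columns $\leqslant c$, so the ``short cascade of adjustments'' you anticipate may in fact be global. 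Without the dictionary in hand, the inductive scaffolding does not amount to a proof.

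The paper avoids this difficulty by recursing on columns rather than on moves. A diagram pair is decomposed as $(K_1, G_1, d)$ --- first-column data plus the rest shifted left --- and both $\Psi_\alpha$ and $\Phi_\alpha$ are described recursively: apply the map for the smaller composition $\overline{M(\alpha, K_1)}$ to $d$, then handle column $1$ by a sequence of single-cell operators $\sharp_{g}$ (or $\flat_{e}$). Induction on $\max(\alpha)$ reduces the mutual-inverse claim to a purely first-column statement: a sequence $\sharp_{g_1},\dots,\sharp_{g_m}$ is undone by $\flat_{k_{\sigma(m)}},\dots,\flat_{k_{\sigma(1)}}$ for the permutation $\sigma$ that sorts the outputs (Lemma~\ref{L: many flat after many sharp} and its dual). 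The technical engine is a pair of commutativity lemmas for adjacent $\sharp$ (resp.\ $\flat$) operators, proved via Kohnert labelings; once those are in place the bijectivity and the preservation of $\wt$ and $\ex$ are immediate. So the paper's inductive parameter is $\max(\alpha)$, not move-length, and its ``elementary step'' lives entirely in column $1$ --- a decomposition orthogonal to the one you propose.
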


The paper is organized as follows. 
In section~\ref{section.new.background}, 
we review related combinatorial rules 
for $\kappa_\alpha$ and $\fL^{(\beta)}_\alpha$. 
In section~\ref{section.maps}, 
we define two maps $\Psi_\alpha$ and $\Phi_\alpha$ 
on $\KKD(\alpha)$ and $\RSVT(\alpha)$ respectively. 
The following sections will prove Theorem~\ref{T: Main}.
In section~\ref{section.Bruhat}, 
we introduce a partial order on all weak compositions. 
We call it the Bruhat order and show it is equivalent to the left swap order in~\cite{A}. 
In section~\ref{section.describe KKD RSVT}, 
we describe the sets $\KKD(\alpha)$ and $\RSVT(\alpha)$
recursively using the Bruhat order. 
In section~\ref{section.flat.sharp}, we introduce two auxiliary operators 
$\sharp_g$ and $\flat_e$ on $\KD(\alpha)$ and discuss their properties. 
In section~\ref{section.recursive}, 
we give recursive descriptions of maps $\Psi_\alpha$ and $\Phi_\alpha$ 
in terms of $\sharp_g$ and $\flat_e$.
Then we show $\Psi_\alpha$ (resp. $\Phi_\alpha$) is 
a well-defined map to $\RSVT(\alpha)$ (resp. $\KKD(\alpha)$)
using the recursive definitions developed in section~\ref{section.describe KKD RSVT}.
Finally we prove Theorem~\ref{T: Main}.

\section{Background}\label{section.new.background}
\subsection{$\RSSYT(\alpha)$ and $\RSVT(\alpha)$}\label{background.tableaux}
Given a partition $\la = (\la_1\geqslant \la_2\geqslant \dots \geqslant \la_\ell \geqslant 0)$, a \definition{Young diagram} of shape $\la$ is a finite collection of boxes, aligned at the left, in which the $i^{th}$ row has $\la_i$ boxes. 
We use English convention for our Young diagrams and tableaux,
so the first row is the highest row. 

A \definition{reverse semistandard Young tableau} of shape $\la$ is a filling of the Young diagram $\la$ with positive number such that
\begin{enumerate}[label=(\roman*)]
    \item each box contains exactly one number,
    \item the entries in each row weakly decrease from left to right, and
    \item the entries of each column strictly decrease from top to bottom.
\end{enumerate}
Let $\RSSYT_\lambda$ be the set of all the reverse semistandard Young tableaux of shape $\la$.

Following~\cite{B:Gr},
we introduce another set of tableaux
where a box might have more than one number.
A \definition{reverse set-valued tableau} of shape $\la$ is a filling of the Young diagram $\la$ with positive numbers such that
\begin{enumerate}[label=(\roman*)]
    \item each box contains a finite and non-empty set of positive integers,
    \item if a set $A$ is to the left of a set $B$ in the same row, then $\min(A) \geqslant \max(B)$, and
    \item if a set $C$ is below a set $A$ in the same column, then $\min(A) > \max(C)$.
\end{enumerate}
Let $\RSVT_\lambda$ be all the reverse set-valued tableaux of shape $\la$.

Let the \definition{weight vector} for $T$ be the weak composition whose $i^{th}$ component is the the total number of appearance of $i$ in $T$, denoted by $\wt(T)$. Given any weak composition $\alpha$, let $|\alpha| = \sum_{i\geqslant 1}\alpha_i$. Given $T\in \RSVT_\lambda$, define $\LL(T)$ to be an element in $\RSSYT_\la$ constructed by only keeping the largest number in each box of $T$. We call these numbers the \definition{leading numbers} of $T$. Any number in $T$ that is not a leading number is called a \definition{extra number}. Let the \definition{excess} of $T$ be the number of extra numbers in $T$, so we can denote it by $\ex(T) = |\wt(T)|-|\la|$.

Next we give the definition of \definition{left key} of $T$, denoted by $\KL(T)$, where $T$ is a $\RSSYT$. It was first given in ~\cite[Section~5]{W}. We give the description as in~\cite[Definition~3.11]{SY}.

\begin{defn} \label{def:left_key} 
Let $C_1,C_2$ be two adjacent columns from a $\RSSYT$ with $C_1$ on the left. 
We may view $C_1$ and $C_2$ as sets. 
We define $C_1 \lhd C_2$ as follows. Assume $C_2 = \{a_1<a_2<\dots <a_m\}$. Start by finding the smallest $b_1\in C_1$ such that $b_1 \geqslant a_1$. Then find the smallest $b_2\in C_2$ such that $b_2 \geqslant a_2$ and $b_2>b_1$. Continue until we find all $b_1,b_2,\dots, b_m$. Then $C_1\lhd C_2 \df b_1<b_2<\dots b_m$. Let $C_1,\dots C_k$ be $k$ columns in a $\RSSYT$, then we can define recursively,
\[
C_1 \lhd C_2 \dots \lhd C_k \df C_1 \lhd (C_2\lhd \dots \lhd C_k).
\]

Given a reverse semistandard Young tableau $T$ with columns $C_1,C_2,\dots,C_n$. Then its left key $\KL(T)$ is a $\RSSYT$ constructed by taking $C_1 \lhd C_2 \dots \lhd C_k$ as its $k^{th}$ column.

Given a reverse set-valued tableau $T$, 
its left key $K_-(T)$ is defined as $K_-(\LL(T))$.
\end{defn}

\begin{exa}\label{eg_left_key}Consider the following $T\in \RSVT_{(3,2)}$. 
We have $\wt(T) = (2,2,2,1,0,1)$ and \linebreak $\ex(T) = 3$. We can also compute $\LL(T)$ and its left key.
\[
T = \begin{ytableau}
64 & 32 & 2\cr
31 & 1 \cr
\end{ytableau}\,, \quad
\LL(T) = \begin{ytableau}
6 & 3 & 2\cr
3 & 1 \cr
\end{ytableau}\,, \quad
\KL(T) = 
\KL(\LL(T)) = \begin{ytableau}
6 & 6 & 6\cr
3 & 3 \cr
\end{ytableau}\,.
\]
\end{exa} 

Given a weak composition $\alpha = (\alpha_1,\alpha_2,\dots,\alpha_n)$, let $\alpha^+$ be the partition obtained from $\alpha$ by sorting the numbers in decreasing order and ignoring the trailing $0$'s. Define the \definition{key tableau} for $\alpha$ to be the unique element in $\RSSYT_{\alpha^+}$ whose $j^{th}$ column consists of the numbers $\{i |\, \alpha_i \geqslant j\}$. Denote this tableau by $\key(\alpha)$.

\begin{rem}
For any reverse set-valued tableau $T$, 
$\KL(T)$ is a key tableau.
\end{rem}

With above concepts, we now define the subsets of $\RSSYT_{\alpha^+}$ and $\RSVT_{\alpha^+}$ that will be used to compute $\kappa_\alpha$ and $\fL^{(\beta)}_\alpha$.

\begin{align*}
\RSSYT(\alpha) &\df \{T\in \RSSYT_{\alpha^+}: \KL(T) \leqslant \key(\alpha)\}\\
\RSVT(\alpha) &\df \{T\in \RSVT_{\alpha^+}: \KL(T) \leqslant \key(\alpha)\}
\end{align*}
Here the $\leqslant$ relation means entry-by-entry comparison. For example, $T$ in Example~\ref{eg_left_key} is in $\RSVT((0,0,2,0,0,3))$ but not in $\RSVT((0,2,0,0,3))$.

We now list the combinatorial formulas in~\cite{LS1,LS2,RS} for \definition{key polynomials}, and in~\cite{BSW,SY} for \definition{Lascoux polynomials} labeled by a weak composition $\alpha$:
\[
    \kappa_\alpha \df \sum_{T\in \RSSYT(\alpha)} x^{\wt(T)}\,, \quad
    \fL^{(\beta)}_\alpha \df \sum_{T\in \RSVT(\alpha)} \beta^{\ex(T)}x^{\wt(T)}\,.
\]

\subsection{Viewing $\RSVT(\alpha)$ 
as a pair of diagrams}
In this subsection, we introduce another perspective
on $\RSVT(\alpha)$.
A \definition{diagram} is a finite subset 
of $\NN \times \NN$.
We may represent a diagram by putting a box at row $r$
and column $c$ for each $(c,r)$ in the diagram.
We adopt the convention where columns begin at $1$
from the left and rows begin at $1$ from the bottom.
The \definition{weight} of a diagram $D$, 
denoted as $\wt(D)$, 
is a weak composition 
whose $i^{th}$ entry is the number of boxes 
in its $i^{th}$ row.

A \definition{diagram pair} is 
an ordered pair $D = (D_1, D_2)$
such that $D_1$ and $D_2$ are disjoint diagrams.
We may represent $D$ by putting a box 
at $(c,r)$ for each $(c,r) \in D_1$ and putting a box 
with label $\X$ at $(c,r)$ for each $(c,r) \in D_2$. 
Cells in $D_1$ are called \definition{Kohnert cells}.
Cells in $D_2$ are called \definition{ghost cells}.
The \definition{weight} of $D$, 
denoted as $\wt(D)$, 
is a weak composition 
whose $i^{th}$ entry is the number of Kohnert cells and ghost cells 
in its $i^{th}$ row.
Let the \definition{excess} of $D$, 
denoted by $\ex(D)$, be $|D_2|$.

Now we embed the set of $\RSVT$ 
into the set of diagram pairs.
Given an $\RSVT$ $T$, 
we send it to $(L, E)$.
The set $L$ (resp. $E$) consists of all $(r, c)$
such that $r$ is a leading (resp. extra) number 
in column $c$ of $T$.
This map is injective.
If we know $(L, E)$ is the image of some $\RSVT$ $T$,
we can uniquely recover $T$:
First, for each $c$, 
build a column that consists of $r$
such that $(c, r) \in L$.
The column should be decreasing from top to bottom.
Then for each $(c,r) \in E$,
put $r$ in the lowest cell whose largest number
is larger than $r$.
This will be column $c$ of $T$.
Now we may view each $RSVT$ as a diagram pair.
We write $T = (L, E)$
to denote this correspondence.
It is clear that this correspondence preserves
$\wt(\cdot)$ and $\ex(\cdot)$.

\begin{exa} \label{RSVT as diagram pair}
Let $T$ be the RSVT in the previous example. 
It corresponds to the diagram pair
$(\{(1,3), (1,6), (2,1), (2,3), (3,2) \}, \{ (1,1), (1,4), (2,2)\})$, which can be presented as
$$
\begin{ytableau}
\cdot \\
\none \\
\X \\
\cdot & \cdot \\
\none & \X & \cdot \\
\X & \cdot
\end{ytableau}
$$
Viewing $T$ as a diagram pair, 
we have $\wt(T) = (2,2,2,1,0,1)$ and $\ex(T) = 3$,
which agrees with the previous example. 
\end{exa}

We may also view $\RSSYT(\alpha)$ as a subset of $\RSVT(\alpha)$.
Thus, $\RSSYT(\alpha)$ is the set of diagram pairs $(L, \emptyset) \in \RSVT(\alpha)$.
With this convention, we have the following observation.
\begin{rem}
If the diagram pair $(L, E)$ is $\RSVT(\alpha)$,
then $(L, \emptyset) \in \RSSYT(\alpha)$.
\end{rem}

\subsection{$\KD(\alpha)$ and $\KKD(\alpha)$}\label{background.diagrams}
We give another combinatorial definition of key polynomials due to Kohnert~\cite{Ko}. 
A diagram pair is called a \definition{key diagram pair}
if its Kohnert cells are left-justified and has no ghost cells.
Given a weak composition $\alpha$, 
we let $D_\alpha$ the key diagram pair associated to $\alpha$:
On its row $i$, there are $\alpha_i$ left-justified Kohnert cells 
and no ghost cells. 

Next, we define a \definition{Kohnert move} on a diagram pair
with no ghost cells:
Select the rightmost box in any row and move it downward to the first position available, possibly jumping over other cells as needed. Let $\KD(\alpha)$ be the closure of $\{D_\alpha\}$ under all possible Kohnert moves.

\begin{thm}~\cite{Ko}
The key polynomials indexed by $\alpha$, is given by 
\[
\kappa_\alpha = \sum_{D \in \KD(\alpha)} \bm{x}^{\wt(D)}
\]
\end{thm}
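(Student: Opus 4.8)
The cleanest route is to reduce the claim to the $\RSSYT$ formula $\kappa_\alpha = \sum_{T\in\RSSYT(\alpha)} x^{\wt(T)}$ stated above: it suffices to produce a weight-preserving bijection $\KD(\alpha)\to\RSSYT(\alpha)$. Using the diagram-pair embedding of the previous subsection, I identify each $T\in\RSSYT(\alpha)$ with the diagram $L$ for which $T=(L,\emptyset)$, so that $\wt(L)=\wt(T)$; it is then enough to prove the set identity $\KD(\alpha)=\{\,L:(L,\emptyset)\in\RSSYT(\alpha)\,\}$, after which the weight statement is automatic. A useful first observation is that a Kohnert move slides a box downward within its own column, so it never changes how many cells a column carries; hence every diagram reachable from $D_\alpha$ still has column lengths $\alpha^+_1,\alpha^+_2,\dots$, and the only conditions that can fail are the row weak-decrease / column strict-decrease requirements and the left-key bound $\KL\leqslant\key(\alpha)$.

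For the inclusion $\KD(\alpha)\subseteq\RSSYT(\alpha)$ I would induct on the number of Kohnert moves used to reach a diagram from $D_\alpha$. The base case holds because $D_\alpha$ corresponds, under the embedding, to the key tableau $\key(\alpha)$, which satisfies $\KL(\key(\alpha))=\key(\alpha)$ and hence lies in $\RSSYT(\alpha)$. For the inductive step I must show that a single Kohnert move carries a diagram $L$ with $(L,\emptyset)\in\RSSYT(\alpha)$ to a diagram $L'$ with $(L',\emptyset)\in\RSSYT(\alpha)$: one checks (a) that after lowering the chosen box the columns stay strictly decreasing and adjacent columns remain compatible, so $(L',\emptyset)$ is a genuine $\RSSYT$, and (b) that no entry of the left key strictly increases, so the bound $\KL\leqslant\key(\alpha)$ survives. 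For the reverse inclusion I would run the induction the other way, on $\sum_{(c,r)\in L}r$, which a Kohnert move strictly decreases and which $D_\alpha$ uniquely maximizes within $\RSSYT(\alpha)$: the plan is to show that whenever $L\neq D_\alpha$ with $(L,\emptyset)\in\RSSYT(\alpha)$ there is an inverse Kohnert move keeping the diagram inside $\RSSYT(\alpha)$, so that reversing the resulting chain exhibits $L\in\KD(\alpha)$. Alternatively, and more in the spirit of this paper, one can observe that the bijection $\Psi_\alpha$ of Theorem~\ref{T: Main} restricts on the excess-zero locus to a weight-preserving bijection $\KD(\alpha)\to\RSSYT(\alpha)$, since a $K$-Kohnert diagram with no ghost cells is exactly a Kohnert diagram and an $\RSVT$ of excess zero is exactly an $\RSSYT$; the present theorem is then the $\beta=0$ specialization of the paper's main result.

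The step I expect to be the real obstacle is (b), the stability of the left key under a single Kohnert move: the operation $\lhd$ is computed recursively across all columns lying to the right of the one that changed, so a purely local move can a priori propagate, and pinning down exactly why it cannot raise any entry of $\KL$ is the combinatorial heart of Kohnert's theorem — precisely the kind of bookkeeping the auxiliary operators of Section~\ref{section.flat.sharp} are built to manage.
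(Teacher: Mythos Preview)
This theorem is not proved in the paper at the point where it is stated; it is cited from Kohnert as background. The paper does, however, independently recover the equivalent set identity $\KD(\alpha)=\RSSYT(\alpha)$ later, as a corollary of Theorems~\ref{T: KKD image} and~\ref{T: RSVT image}: restricted to diagram pairs with no ghost cells, the two recursive characterizations coincide (same first-column condition $K_1\leqslant\supp(\alpha)$, same recursive condition on $d$ relative to $\overline{M(\alpha,K_1)}$, and condition~(3) is vacuous), so an induction on $\max(\alpha)$ finishes immediately.

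Your first route --- inducting on the number of Kohnert moves and tracking the left key directly --- is a genuinely different strategy from the paper's, but what you have written is an outline rather than a proof. Step~(a) can indeed be carried out cleanly (the key point is that the moved cell was \emph{rightmost} in its row, so column $c+1$ has no cell in row $r$, which forces strict inequalities $a_i>b_i$ in the affected range and hence $a_i-1\geqslant b_i$). Step~(b), stability of $\KL$ under a single Kohnert move, you correctly flag as the crux and then leave unresolved; the reverse inclusion via inverse Kohnert moves is likewise only asserted. That step is essentially the substance of Kohnert's original argument, so your sketch reduces the theorem to itself. Your alternative --- restricting $\Psi_\alpha$ to the excess-zero locus, where it is the identity map --- is correct and is exactly the spirit of the paper's corollary, but note it is a forward reference to machinery built much later. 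The conceptual gain of the paper's recursive approach is that by peeling off the first column and inducting on $\max(\alpha)$ it never has to analyze what a single Kohnert move does to $\KL$ at all.
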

\begin{rem} \label{natual_map}
There is a natural identification 
between $\KD(\alpha)$ and $\RSSYT(\alpha)$ which yields $\KD(\alpha) = \RSSYT(\alpha)$. 
Take $T\in \RSSYT(\alpha)$. 
By our convention in the previous subsection,
$T$ is viewed as a diagram pair $(L, \emptyset)$. This result is well-known
to experts. For example, it follows from work done in~\cite{A}.    
For completeness, 
we will recover this result
in section \ref{section.describe KKD RSVT}.
\end{rem}

\begin{exa} \label{eg_021} Let $\alpha = (0,2,1)$, then 
\begin{align*}
\key(\alpha) &= 
\begin{ytableau}
3 & 2\cr
2 \cr
\end{ytableau}\,,\quad D(\alpha) = 
\raisebox{0.5cm}{
\begin{ytableau}
\cdot \cr
\cdot & \cdot \cr
\none
\end{ytableau}}\,, \text{ thus we obtain:}\\
\KD(\alpha) &= \Bigg\{
\raisebox{0.5cm}{
\begin{ytableau}
\cdot \cr
\cdot & \cdot \cr
\none
\end{ytableau}\,,
\quad
\begin{ytableau}
\none \cr
\cdot & \cdot \cr
\cdot
\end{ytableau}\,,
\quad
\begin{ytableau}
\cdot \cr
\cdot \cr
\none & \cdot 
\end{ytableau}\,,
\quad
\begin{ytableau}
\none \cr
\cdot \cr
\cdot & \cdot
\end{ytableau}\,,
\quad
\begin{ytableau}
\cdot \cr
\none \cr
\cdot & \cdot
\end{ytableau}}\Bigg\}\,, \text{ and}\\
\RSSYT(\alpha) &= \Bigg\{
\begin{ytableau}
3  & 2 \cr
2
\end{ytableau}\,,
\quad  
\begin{ytableau}
2  & 2 \cr
1
\end{ytableau}\,,
\quad 
\begin{ytableau}
3  & 1 \cr
2
\end{ytableau}\,,
\quad 
\begin{ytableau}
2  & 1 \cr
1
\end{ytableau}\,,
\quad 
\begin{ytableau}
3  & 1 \cr
1
\end{ytableau}\Bigg\}\,.
\quad 
\end{align*}
If we view each $\RSSYT$ as a diagram pair,
it is clear that $\RSSYT(\alpha) = \KD(\alpha)$.
\end{exa}

\smallskip
Ross and Yong~\cite[Section~1.2]{RossY} generalized Kohnert moves. We state their construction below.

A \definition{$K$-Kohnert move} is an operation on a diagram pair.
It selects the rightmost cell in a row. The selected cell cannot be a ghost cell. Then move this cell downward to the first position available. It can jump over other Kohnert cells, but cannot jump over any ghost cells. After the move, it may or may not leave a ghost cell at the original position. When a $K$-Kohnert move leaves a ghost cell, we also refer it as a \definition{ghost move}. Let $\KKD(\alpha)$ be the closure of $\{D_\alpha\}$ under all possible $K$-Kohnert moves.
We make the following observations.

\begin{rem}
Let $\alpha$ be a weak composition.
We have 
\begin{enumerate}
\item[$\bullet$]
$\KD(\alpha) \subseteq \KKD(\alpha)$.
\item[$\bullet$] If $(K, G) \in
\KKD(\alpha)$,
then $(K, \emptyset) \in \KD(\alpha)$.
\end{enumerate}
\end{rem}

\begin{rem}
Usually, an element of $\KD(\alpha)$ is viewed as a diagram.
We defined $\KD(\alpha)$ as a set of diagram pairs
so we can work with $\KD(\alpha)$ and $\KKD(\alpha)$
using the same technique.
In particular, with our convention,
$\KD(\alpha)$ is viewed as a subset of $\KKD(\alpha)$.
\end{rem}

Ross and Yong~\cite{RossY}
conjectured a formula for Lascoux 
polynomials involving $K$-Kohnert diagrams.

\begin{conj}~\cite{RossY} The Lascoux polynomials indexed by $\alpha$, is given by
\[
\fL^{(\beta)}_\alpha = \sum_{D \in \KKD(\alpha)}\beta^{\ex(D)}\bm{x}^{\wt(D)}\,.
\]
\end{conj}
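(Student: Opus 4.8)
The plan is to deduce this identity from a weight- and excess-preserving bijection between $\RSVT(\alpha)$ and $\KKD(\alpha)$. Since by definition
\[
\fL^{(\beta)}_\alpha = \sum_{T \in \RSVT(\alpha)} \beta^{\ex(T)}x^{\wt(T)},
\]
it suffices to construct maps $\Psi_\alpha : \KKD(\alpha) \to \RSVT(\alpha)$ and $\Phi_\alpha : \RSVT(\alpha) \to \KKD(\alpha)$ that are mutually inverse and preserve $\wt(\cdot)$ and $\ex(\cdot)$; summing $\beta^{\ex}x^{\wt}$ over either side then gives the claimed formula. This is exactly Theorem~\ref{T: Main}, so the whole problem reduces to building and analyzing $\Psi_\alpha$ and $\Phi_\alpha$.

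I would build these maps recursively along a partial order on weak compositions. First I would introduce a ``Bruhat order'' on weak compositions and show it coincides with the left swap order of~\cite{A}; in this order the weakly decreasing compositions are the minimal elements. The base case is $\alpha$ weakly decreasing: then no $K$-Kohnert move applies, so $\KKD(\alpha) = \{D_\alpha\}$, while the left key condition $\KL(T) \le \key(\alpha)$ pins down a single tableau $\key(\alpha)$ (even set-valued entries are excluded, since consecutive column entries of $\key(\alpha)$ leave no room for extra numbers), and both singletons carry $\ex = 0$. For the inductive step one picks, for non-minimal $\alpha$, a cover $\gamma \lessdot \alpha$ and establishes \emph{parallel} recursive descriptions: $\KKD(\alpha)$ in terms of $\KKD(\gamma)$, and $\RSVT(\alpha)$ in terms of $\RSVT(\gamma)$. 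The ghost-free shadow of this is the classical identification $\KD(\alpha) = \RSSYT(\alpha)$ (Remark~\ref{natual_map}), which gets re-derived along the way; the genuinely new content is carrying the extra numbers and ghost cells through the recursion.

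To pass between the pieces of these two decompositions I would introduce the auxiliary operators $\sharp_g$ and $\flat_e$ on diagrams, acting (roughly) by inserting or deleting a single ghost cell, respectively a single extra number, via a $K$-Kohnert-type move, and then define $\Psi_\alpha$ and $\Phi_\alpha$ recursively: forget the data distinguishing $\alpha$ from $\gamma$ using $\flat_e$, apply $\Psi_\gamma$ (resp.\ $\Phi_\gamma$) from the induction hypothesis, and reinsert that data using $\sharp_g$. One then checks, by induction on the Bruhat order: (i) $\Psi_\alpha$ lands in $\RSVT(\alpha)$ and $\Phi_\alpha$ lands in $\KKD(\alpha)$; (ii) $\Psi_\alpha \circ \Phi_\alpha = \id$ and $\Phi_\alpha \circ \Psi_\alpha = \id$; (iii) both maps preserve $\wt$ and $\ex$. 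Given the recursive definitions, (ii) and (iii) become essentially formal once $\sharp_g$ and $\flat_e$ are shown to have the expected effect on a diagram pair and its statistics.

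The main obstacle is well-definedness, i.e.\ part~(i): verifying that $\Psi_\alpha$ applied to a $K$-Kohnert diagram really yields an $\RSVT$ with $\KL \le \key(\alpha)$, and dually that $\Phi_\alpha$ applied to an $\RSVT$ really lands in the $K$-Kohnert closure of $D_\alpha$. In the ghost-free setting this is classical, but ghost cells break the clean ``read off the columns'' description of the left key -- a ghost-leaving move alters the diagram in a way invisible to the leading numbers -- so one must track precisely which rows and columns the extra numbers occupy after each recursive step and confirm the key inequality is never violated. A second, more technical obstacle is showing that the recursive description of $\KKD(\alpha)$ actually equals the closure under \emph{all} $K$-Kohnert moves: this requires a confluence-style argument that the set is independent of the order in which moves are applied, and it must be in place before the recursion on $\KKD$ can be used at all.
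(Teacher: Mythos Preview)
Your reduction to Theorem~\ref{T: Main} is exactly right, and your base case (weakly decreasing $\alpha$ giving singletons on both sides) is correct. But the induction you sketch is not the paper's and, as written, has a real gap. You propose to induct along Bruhat covers $\gamma \lessdot \alpha$, describing $\KKD(\alpha)$ ``in terms of'' $\KKD(\gamma)$ and passing between levels via operators $\sharp_g, \flat_e$ that insert or delete a single ghost cell or extra number. The problem is that for a left swap $\gamma = (i\,j)\alpha$ the difference $\KKD(\alpha) \setminus \KKD(\gamma)$ is not parametrized by any single datum one could insert or remove: an element of this difference may have arbitrarily many ghost cells and bear no visible relation to rows $i,j$, and there is no evident one-step move carrying it into $\KKD(\gamma)$. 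So the phrase ``forget the data distinguishing $\alpha$ from $\gamma$'' hides the entire difficulty, and the recursive step does not go through as stated. Your description of $\sharp_g, \flat_e$ also does not match the paper's operators, which never create or destroy ghost cells.

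The paper instead inducts on $\max(\alpha)$ via a column-by-column decomposition: encode a diagram pair as $(K_1, G_1, d)$, where $K_1, G_1 \subseteq [n]$ record column~1 and $d$ is the rest shifted left. Theorems~\ref{T: KKD image} and~\ref{T: RSVT image} then characterize $\KKD(\alpha)$ and $\RSVT(\alpha)$ recursively: $d$ must lie in the corresponding set for $\overline{M(\alpha, K_1)}$, a composition of strictly smaller maximal entry, and $K_1, G_1$ must satisfy explicit numerical inequalities. The Bruhat order is a tool here---used to define $M(\alpha,\cdot)$ and to obtain containments such as Corollary~\ref{C: KD RSVT order}---not the induction variable. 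The operators $\sharp_g$ and $\flat_e$ act only on column~1 of a ghost-free Kohnert diagram, raising or lowering a single Kohnert cell; $\Psi_\alpha$ and $\Phi_\alpha$ are then built by first recursing on $d$ and afterwards applying a sequence of these column-1 moves. The commutation Lemmas~\ref{L: sharp commute 1}--\ref{L: flat commute} allow those sequences to be reordered, and that reordering---rather than any confluence argument on $K$-Kohnert moves themselves---is what makes $\Psi_\alpha$ and $\Phi_\alpha$ mutual inverses.
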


We prove this conjecture by establishing
bijections between $\KKD(\alpha)$ and $\RSVT(\alpha)$
that preserve $\wt(\cdot)$ and $\ex(\cdot)$.
Moreover, when restricted to $\KD(\alpha)\subseteq \KKD(\alpha)$
and $\RSSYT(\alpha) \subseteq \RSVT(\alpha)$,
our bijections restrict to the identity maps.
We will describe our bijections 
in the next two subsections.

\begin{exa} \label{eg_021k}Continue Example~\ref{eg_021} for $\alpha = (0,2,1)$, we get
\begin{align*}
\KKD(\alpha) &= \KD(\alpha) \bigcup \Bigg\{
\raisebox{0.5cm}{
\begin{ytableau}
\X \cr
\cdot & \cdot \cr
\cdot
\end{ytableau}\,,
\quad
\begin{ytableau}
\cdot \cr
\cdot & \X \cr
\none & \cdot
\end{ytableau}\,,
\quad
\begin{ytableau}
\none \cr
\cdot & \X\cr
\cdot & \cdot 
\end{ytableau}\,,
\quad
\begin{ytableau}
\X \cr
\cdot \cr
\cdot & \cdot
\end{ytableau}\,,
\quad
\begin{ytableau}
\cdot \cr
\X \cr
\cdot & \cdot
\end{ytableau}\,,
\quad
\begin{ytableau}
\X \cr
\cdot & \X \cr
\cdot & \cdot
\end{ytableau}}\Bigg\}\,, \text{ and}\\
\RSVT(\alpha) &= \RSSYT(\alpha) \bigcup\Bigg\{
\begin{ytableau}
3  & 2 \cr
21
\end{ytableau}\,,
\quad  
\begin{ytableau}
3  & 21 \cr
2
\end{ytableau}\,,
\quad 
\begin{ytableau}
2  & 21 \cr
1
\end{ytableau}\,,
\quad 
\begin{ytableau}
32  & 1 \cr
1
\end{ytableau}\,,
\quad 
\begin{ytableau}
3  & 1 \cr
21
\end{ytableau}\,,
\quad 
\begin{ytableau}
3  & 21 \cr
21
\end{ytableau}\Bigg\}\,.
\end{align*}
Viewing the $6$ elements in $\RSVT(\alpha)$ with at least one extra number as diagram pairs, we obtain the following. Note they are different from the elements in $\KKD(\alpha)$ with at least one ghost cell.
\[
\Bigg\{
\raisebox{0.5cm}{
\begin{ytableau}
\cdot \cr
\cdot & \cdot \cr
\X
\end{ytableau}\,,
\quad
\begin{ytableau}
\cdot \cr
\cdot & \cdot \cr
\none & \X
\end{ytableau}\,,
\quad
\begin{ytableau}
\none \cr
\cdot & \cdot \cr
\cdot & \X 
\end{ytableau}\,,
\quad
\begin{ytableau}
\cdot \cr
\X \cr
\cdot & \cdot
\end{ytableau}\,,
\quad
\begin{ytableau}
\cdot \cr
\cdot \cr
\X & \cdot
\end{ytableau}\,,
\quad
\begin{ytableau}
\cdot \cr
\cdot & \cdot \cr
\X & \X
\end{ytableau}}\Bigg\}
\]

Thus, Conjecture~\ref{Main conjecture} is correct
when $\alpha = (0, 2, 1)$ since
$$
\sum_{D \in \KKD(\alpha)}\beta^{\ex(D)}\bm{x}^{\wt(D)}
= \sum_{T \in \RSVT(\alpha)}\beta^{\ex(T)}\bm{x}^{\wt(T)}.
$$
\end{exa}

\subsection{Kohnert Tableaux}
\label{Kohnert Tableaux}
Assaf and Searles introduced Kohnert tableaux
in~\cite{A}.
We will use Kohnert tableaux
to prove the correctness of our bijections.

\begin{defn}~\cite[Definition 2.3]{A}
\label{def: KT}
Let $\alpha$ be a weak composition. A {\em \definition{Kohnert tableau} with content $\alpha$}
is a Young diagram filled by numbers
such that:  
\begin{enumerate}
\item Column $c$ of the tableau 
consists of numbers $\{i: \alpha_i \geqslant c\}$,
with each number appearing exactly once.
\item If a number $i$ appears in row $r$,
then $i \geqslant r$.
\item If a number $i$ appears in $(c,r)$ 
and $(c + 1, r')$,
then $r \geqslant r'$.
\item Let $i, j$  appear in column $c$ 
with $j > i$ and $j$ is lower than $i$.
Then there is an $i$ in column $c+1$
that is strictly above the $j$ in column $c$.
\end{enumerate}

Let $\KT(\alpha)$ be the set of all
Kohnert tableaux with content $\alpha$.
\end{defn}

Assaf and Searles constructed bijections 
between $\KT(\alpha)$ and $\KD(\alpha)$ in~\cite{A}.
For each \linebreak $T \in \KT(\alpha)$,
we may ignore its numbers and view each cell
as a Kohnert cell.
By~\cite[Lemma 2.4]{A},
the resulting diagram pair is in $\KD(\alpha)$.

The inverse of the map above is called 
{\em Kohnert Labeling with respect to $\alpha$},
denoted as $\Label_\alpha(\cdot)$.
We may describe it as the following algorithm
on certain diagram pairs. 

Let $D$ be an arbitrary diagram pair
such that column $c$ of $D$
has $|\{i: \alpha_i \geqslant c\}|$ 
Kohnert cells and no ghost cells.
Initialize sets $S_1, S_2, \dots$
as $S_c = \{i: \alpha_i \geqslant c\}$.
Iterate through boxes of $D$ from right to left,
and from bottom to top within each column.
For the box $(c,r)$, 
find the smallest $i \in S_c$ 
such that $i$ does not appear at $(c+1, r')$
for all $r' > r$.
We remove $i$ from $S_c$ and fill $i$
in $(c,r)$ of $D$.
If no such $i$ exists
or $i < r$,
terminate the algorithm.
After all boxes are filled, 
output the final tableau.

By~\cite[Lemma 2.6, Lemma 2.7, Theorem 2.8]{A},
the labeling algorithm on $D$ produces an output
if and only if $D \in \KD(\alpha)$.
Moreover, if we restrict the algorithm
on $\KD(\alpha)$, 
then this is a bijection from $\KD(\alpha)$
to $\KT(\alpha)$ whose inverse is described above.

\begin{exa} Let $\alpha = (0,2,1)$, we have 
\[
\KT(\alpha) = \Bigg\{
\raisebox{0.5cm}{
\begin{ytableau}
3 \cr
2 & 2 \cr
\none
\end{ytableau}\,,
\quad
\begin{ytableau}
\none \cr
2 & 2 \cr
3
\end{ytableau}\,,
\quad
\begin{ytableau}
3 \cr
2 \cr
\none & 2 
\end{ytableau}\,,
\quad
\begin{ytableau}
\none \cr
3 \cr
2 & 2
\end{ytableau}\,,
\quad
\begin{ytableau}
3 \cr
\none \cr
2 & 2
\end{ytableau}}\Bigg\}\,,
\]
where the relative order in the sets corresponds to $\KD(\alpha)$ from Example~\ref{eg_021} under the above labeling algorithm.
\end{exa}

\section{Describing the maps}
\label{section.maps}
For each composition $\alpha$, 
we have introduced two sets of diagram pairs: 
$\KKD(\alpha)$ and $\RSVT(\alpha)$.
We will define two maps: $\Psi_\alpha$ on $\KKD(\alpha)$ 
and $\Phi_\alpha$ on $\RSVT(\alpha)$.
In section~\ref{section.recursive},
we will show the image of $\Psi_\alpha$ (resp. $\Phi_\alpha$)
lies in $\RSVT(\alpha)$ (resp. $\KKD(\alpha)$).

\subsection{An informal description of $\Psi_\alpha$}\label{prelim.kkd.to.rsvt}
In this section, 
we describe a map $\Psi_\alpha$ from $\KKD(\alpha)$
to the set of all diagram pairs. 
First, we describe an operator on $\KD(\alpha)$.
Let $G$ be a diagram.
Then $\sharp_G(\cdot)$ acts on $\KD(\alpha)$ in the following way:
Take $D \in \KD(\alpha)$.
Iterate through cells of $G$ from right to left.
Within each column, go from bottom to top.
For $(c, r) \in G$,
search for the largest $r' \leqslant r$ such that
$(c, r')$ is a Kohnert cell in $D$.
Moreover, if we raise the cell $(c,r')$ to $(c,r)$,
the resulting diagram is still in $\KD(\alpha)$.
After finding such $r'$, 
we move cell $(c,r')$ to $(c,r)$.
After iterating over all cells in $G$, 
we denote the final Kohnert diagram by $\sharp_G(D)$.
If we cannot find such an $r'$ during an iteration,
then $\sharp_G(D)$ is undefined. 

\begin{exa} Let $D$ be the fourth Kohnert diagram in Example~\ref{eg_021}. 
Let $G$ be the diagram $\{ (1,3), (2,2)\}$.
We may compute $\sharp_G(D)$ as follows. 
We label $(c,r)$ and $(c,r')$ involved in each step above and below the arrows.
\begin{align*}
\begin{ytableau}
\none \cr
\cdot \cr
\cdot & \cdot
\end{ytableau}\;&\xrightarrow[(2,1)]{(2,2)}\;
\begin{ytableau}
\none \cr
\cdot & \cdot \cr
\cdot
\end{ytableau}\;\xrightarrow[(1,1)]{(1,3)}\;
\begin{ytableau}
\cdot \cr
\cdot & \cdot \cr
\none
\end{ytableau}
\end{align*}
\end{exa}
Now, we may describe the map 
$\Psi_\alpha$. 
Take $D = (K, G) \in \KKD(\alpha)$.
Compute $ (L, \emptyset) = \sharp_G((K,\emptyset)) \in \KD(\alpha)$.
Then $\Psi_\alpha(D) := (L, (K \sqcup G) - L)$.

\begin{exa} Let $D = (K, G)$ be the last diagram pair
in Example~\ref{eg_021k}.
The previous example shows $ \sharp_G(K) = (\{ (1,2), (1,3), (2,2)\}, \emptyset)$.
Thus, $D$ is sent to the diagram pair 
$(\{(1,2), (1,3), (2,2)\},$ $ \{(1,1), (2,1)\})$.
Notice that this diagram pair
corresponds to the following $\RSVT$:
\begin{align*} 
\raisebox{0.43cm}
{\begin{ytableau}
\cdot \cr
\cdot & \cdot \cr
\X & \X
\end{ytableau}}
=
\raisebox{0.43cm}{
\begin{ytableau}
3  & 21 \cr
21
\end{ytableau}}.
\end{align*}
\end{exa}

We say this is an informal description of $\Psi_\alpha$
since the map is not obviously well-defined. 
It seems possible that $\sharp_G((K,\emptyset))$ is undefined. 
In section~\ref{section.recursive},
we will provide an alternative description 
of the map $\Psi_\alpha$
and check the following.
\begin{lemma}
\label{L: Psi well-defined}
The map $\Psi_\alpha$ is a well-defined map
from $\KKD(\alpha)$ to $\RSVT(\alpha)$.
\end{lemma}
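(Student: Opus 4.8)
\textbf{Proof proposal for Lemma~\ref{L: Psi well-defined}.}

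The plan is to postpone the direct verification and instead build up the necessary machinery in the intervening sections, exactly as the introduction outlines. The central difficulty is that $\Psi_\alpha$ as described is a ``global'' operation: given $D = (K, G) \in \KKD(\alpha)$ we must show that $\sharp_G((K, \emptyset))$ never fails, i.e.\ that at each cell $(c,r) \in G$ there really is a Kohnert cell at some $(c, r') $ with $r' \leqslant r$ that can be legally raised to $(c,r)$ while staying inside $\KD(\alpha)$, and moreover that the resulting $(L, (K \sqcup G) - L)$ satisfies the left-key condition defining $\RSVT(\alpha)$. Proving this by brute force on an arbitrary $K$-Kohnert diagram seems hopeless, so the approach is to trade the ``closure under moves'' description of both $\KKD(\alpha)$ and $\RSVT(\alpha)$ for recursive descriptions, and to show $\Psi_\alpha$ respects those recursions.

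Concretely, I would proceed in the following order. First, in the spirit of section~\ref{section.Bruhat}, introduce the Bruhat/left-swap partial order on weak compositions and establish that both $\KKD(\alpha)$ and $\RSVT(\alpha)$ decompose recursively along this order: an element of $\KKD(\alpha)$ is obtained from an element of $\KKD(\alpha')$ for some $\alpha'$ covered by $\alpha$ (plus a last Kohnert/ghost move), and likewise for $\RSVT(\alpha)$ via the left-key condition, with the base case being the single diagram $D_\alpha$ when $\alpha$ is a partition. Second, factor the operator $\sharp_G$ through the auxiliary operators $\sharp_g$ (raising a single cell to row $g$ in a single column) and $\flat_e$ (its partial inverse, lowering), and record their basic compatibility with Kohnert moves and with $\Label_\alpha$, using the Kohnert-tableau technology of subsection~\ref{Kohnert Tableaux}; the key point is that applying $\sharp_g$ or $\flat_e$ to a diagram in $\KD(\alpha)$ either lands again in $\KD(\alpha)$ in a controlled way or fails for a reason that the recursion rules out. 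Third, give the promised recursive description of $\Psi_\alpha$ itself: if $D \in \KKD(\alpha)$ is built from $D^- \in \KKD(\alpha^-)$ by one last move, then $\Psi_\alpha(D)$ is obtained from $\Psi_{\alpha^-}(D^-)$ by a corresponding modification expressible through $\sharp_g$, $\flat_e$. Finally, induct: assuming $\Psi_{\alpha^-}$ is a well-defined map into $\RSVT(\alpha^-)$, the single-step compatibility lemmas show the intermediate $\sharp_G$ computation never gets stuck and that the output still satisfies $\KL(\cdot) \leqslant \key(\alpha)$, hence lies in $\RSVT(\alpha)$.

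The main obstacle is the third step: producing a recursive formula for $\Psi_\alpha$ and proving that the ``global'' $\sharp_G$ applied all at once agrees with the step-by-step construction. The subtlety is that $\sharp_G$ processes the ghost cells of $G$ in a fixed order (right to left, bottom to top within a column), and one must check that peeling off the last Kohnert/ghost move from $D$ corresponds to peeling off exactly one cell from $G$ (or from $K$) in a way that commutes with this processing order; the different cases (a plain Kohnert move versus a ghost move, and whether the moved cell's column already carries ghosts) will each need separate but parallel treatment. Once that commutation is in hand, well-definedness and the membership $\Psi_\alpha(D) \in \RSVT(\alpha)$ follow from the recursive characterizations of section~\ref{section.describe KKD RSVT} by a routine induction on the Bruhat order, and the base case $\alpha = \alpha^+$ is immediate since then $G = \emptyset$, $\sharp_\emptyset$ is the identity, and $D_\alpha \mapsto (D_\alpha, \emptyset) \in \RSSYT(\alpha) \subseteq \RSVT(\alpha)$.
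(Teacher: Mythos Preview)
Your high-level strategy (postpone the proof, build the machinery of sections~\ref{section.Bruhat}--\ref{section.flat.sharp}, then give a recursive description of $\Psi_\alpha$ and induct) is correct, but the recursion you describe is not the one the paper uses, and as stated it does not work.

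You propose to induct along the Bruhat order on compositions, writing ``if $D \in \KKD(\alpha)$ is built from $D^- \in \KKD(\alpha^-)$ by one last move, then $\Psi_\alpha(D)$ is obtained from $\Psi_{\alpha^-}(D^-)$ by a corresponding modification''. This conflates two different things. Peeling off one $K$-Kohnert move from $D$ produces a diagram $D^-$ that is still in $\KKD(\alpha)$, not in $\KKD(\alpha^-)$ for some strictly smaller $\alpha^-$; the index $\alpha$ never changes along a chain of moves. So an induction on the number of moves would keep $\alpha$ fixed, and an induction on the Bruhat order has no obvious inductive step, since $\KKD(\alpha^-) \subseteq \KKD(\alpha)$ already. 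Moreover, tracking how a single $K$-Kohnert move on $D$ interacts with the global operator $\sharp_G$ (where both $K$ and $G$ may change) is exactly the ``brute force'' approach you wanted to avoid; the commutation problem you flag in your last paragraph is genuinely hard in that framework.

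The paper's recursion is column-wise, and the induction is on $\max(\alpha)$. One writes $D = (K_1, G_1, d)$, where $K_1, G_1$ record the first column and $d$ is the remaining columns shifted left. Theorem~\ref{T: KKD image} says precisely that $d \in \KKD(\overline{M(\alpha, K_1)})$, a composition with strictly smaller maximum, so by induction $t := \Psi_{\overline{M(\alpha, K_1)}}(d)$ is well-defined and lies in $\RSVT(\overline{M(\alpha, K_1)})$. One then applies the single-column operators $\sharp_{g_1}, \sharp_{g_2}, \dots$ (for $g_i \in G_1$ in increasing order) to $(K_1, \emptyset, t)$; each $\sharp_{g_i}$ is well-defined by Lemma~\ref{L: sharp well-defined} together with condition~(3) of Theorem~\ref{T: KKD image}. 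Finally one checks the four conditions of Theorem~\ref{T: RSVT image} directly for the output, using Lemma~\ref{L: RSVT in another set} for condition~(4). The Bruhat order enters only indirectly, through the characterizations in section~\ref{section.describe KKD RSVT}; it is not the variable of induction.
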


\subsection{An informal description of $\Phi_\alpha$}
\label{prelim.rsvt.to.kkd}
The map $\Phi_\alpha$
can be described similarly on $\RSVT(\alpha)$.
First, we need an analogue 
of the $\sharp_G(\cdot)$ operator.
Let $E$ be a diagram.
Then $\flat_E(\cdot)$ acts on $\KD(\alpha)$ in the following way:
Take $D \in \KD(\alpha)$.
Iterate through cells of $E$ from left to right.
Within each column, go from top to bottom.
For $(c, r) \in G$,
search for the smallest $r' \geqslant r$ such that
$(c, r')$ is a Kohnert cell in $D$.
Moreover, if we drop the cell $(c,r')$ to $(c,r)$,
the resulting diagram is still in $\KD(\alpha)$.
After finding such $r'$, 
we move cell $(c,r')$ to $(c,r)$.
After iterating over all cells in $G$, 
we denote the final Kohnert diagram by $\flat_G(D)$.
If we cannot find such an $r'$ during an iteration,
then $\flat_G(D)$ is undefined.

\begin{exa} Let $D$ be the first Kohnert diagram in Example~\ref{eg_021}.
Let $E$ be the diagram $\{ (1,1), (2,1) \}$.
We may compute $\flat_E(D)$ as follows. 
We label $(c,r)$ and $(c,r')$ involved in each step above and below the arrows.
\begin{align*}
\begin{ytableau}
\cdot \cr
\cdot & \cdot \cr
\none
\end{ytableau}\;&\xrightarrow[(1,3)]{(1,1)}\;
\begin{ytableau}
\none \cr
\cdot & \cdot \cr
\cdot
\end{ytableau}\;\xrightarrow[(2,2)]{(2,1)}\;
\begin{ytableau}
\none \cr
\cdot \cr
\cdot & \cdot
\end{ytableau}
\end{align*}
\end{exa}
Now, we may describe the map 
$\Phi_\alpha$. 
Take $T \in \RSVT(\alpha)$
and we may write $T$ as a diagram pair $(L, E)$.
Compute $(K,\emptyset) = \flat_E((L, \emptyset))$.
Then $\Phi_\alpha(T) := (K, (L \sqcup E) - K)$.

\begin{exa} Let $T$ be the last $\RSVT$ 
in Example~\ref{eg_021k}.
We may write $T = (L, E)$ as \\
$(\{(1,2), (1,3), (2,2)\},$ $\{(1,1), (2,1)\})$.
The previous example computes 
$(K,\emptyset) = \flat_E((L, \emptyset))$.
Thus, $(L \sqcup E) - K = \{(1,3), (2,2)\}$ and
$T$ is sent to

\begin{align*}
\begin{ytableau}
\X \cr
\cdot & \X \cr
\cdot & \cdot
\end{ytableau}
\end{align*}
Notice that this is an element of $\KKD(\alpha)$.
\end{exa}

Again, $\Phi_\alpha$ is not obviously well-defined.
In section~\ref{section.recursive},
we will provide an alternative description 
of the map $\Phi_\alpha$
and check the following.
\begin{lemma}
\label{L: Phi well-defined}
The map $\Phi_\alpha$ is a well-defined map
from $\RSVT(\alpha)$ to $\KKD(\alpha)$.
\end{lemma}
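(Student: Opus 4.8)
\textbf{Proof proposal for Lemma~\ref{L: Phi well-defined} (and, by symmetry, Lemma~\ref{L: Psi well-defined}).}

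The plan is to abandon the ``informal'' definition of $\Phi_\alpha$ through a single application of $\flat_E$ on a Kohnert diagram, and instead build the map recursively on the Bruhat/left-swap order on weak compositions promised in section~\ref{section.Bruhat}. First I would prove the recursive descriptions of $\RSVT(\alpha)$ and $\KKD(\alpha)$ announced in section~\ref{section.describe KKD RSVT}: every $T \in \RSVT(\alpha)$ with $\alpha \neq \alpha^+$ (equivalently, $\alpha$ not a partition) is obtained from some $T' \in \RSVT(\beta)$, with $\beta$ covering $\alpha$ in the Bruhat order, by a controlled ``local move'' on a pair of rows, and likewise for $\KKD$; when $\alpha = \alpha^+$ both sets collapse (the Kohnert/$\RSVT$ closure is generated only by downward/ghost moves that stay put, so $\KKD(\alpha^+) = \KD(\alpha^+)$ and $\RSVT(\alpha^+) = \RSSYT(\alpha^+)$ — this is the content of Remark~\ref{natual_map} plus its $K$-analogue, and serves as the base case). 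Then I would \emph{define} $\Phi_\alpha$ recursively: on the base case $\Phi_{\alpha^+}$ is the identity (under the diagram-pair identification); and for general $\alpha$, given $T \in \RSVT(\alpha)$ written as the image of $T' \in \RSVT(\beta)$ under the row-pair move, set $\Phi_\alpha(T)$ to be the image of $\Phi_\beta(T') \in \KKD(\beta)$ under the corresponding $K$-Kohnert move(s). The operators $\sharp_g$ and $\flat_e$ of section~\ref{section.flat.sharp} are exactly the bookkeeping needed to track how a single row-pair move interacts with the ghost cells already present, and section~\ref{section.recursive} is where one checks that this recursive $\Phi_\alpha$ agrees with the informal $\flat_E$ description — i.e.\ that $\flat_E$ is never undefined on the diagrams that actually arise, because it factors through the recursion.

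Concretely, the key steps, in order, are: (1) establish the Bruhat = left-swap order and its covering relations (section~\ref{section.Bruhat}); (2) prove the recursive structure theorems for $\KKD(\alpha)$ and $\RSVT(\alpha)$ — each nonminimal element has a canonical ``parent'' in the set indexed by a covering composition, obtained by undoing one local move — this is where the left-key condition $\KL(T) \leqslant \key(\alpha)$ must be shown to be preserved/reflected by the move, using Definition~\ref{def:left_key}; (3) prove the commutation lemmas for $\sharp_g$ and $\flat_e$ (they are partial inverses on $\KD(\alpha)$, they commute appropriately with the Kohnert moves, and composing $\flat$'s in the prescribed order reproduces a single $\flat_E$); (4) define $\Psi_\alpha$ and $\Phi_\alpha$ recursively and show each step lands in the target set — for $\Phi_\alpha$ one must verify the output is genuinely reachable from $D_\alpha$ by $K$-Kohnert moves (closure), and for $\Psi_\alpha$ that the output satisfies the $\RSVT$ column/row inequalities and the left-key bound; (5) identify the recursive definitions with the informal $\sharp_G$/$\flat_E$ ones, which yields both well-definedness lemmas at once; (6) finally, since the recursions are built from the same covering data, $\Psi_\alpha$ and $\Phi_\alpha$ are step-by-step inverses, and $\wt,\ex$ are preserved at each step because a single row-pair move / $K$-Kohnert move changes the weight and excess in a predetermined way on both sides — giving Theorem~\ref{T: Main}.

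The main obstacle I anticipate is step (2): showing that $\RSVT(\alpha)$ (and $\KKD(\alpha)$) really does decompose cleanly along Bruhat covers, i.e.\ that every $T \in \RSVT(\alpha)$ has a well-defined parent and that the ``undo one move'' operation is consistent regardless of choices. The left-key condition is global — it compares $\KL(T)$, computed by the $\lhd$ column-merging procedure across \emph{all} columns, against $\key(\alpha)$ — so proving that a purely local row-pair modification interacts predictably with $\KL$ requires understanding how the column-merge $C_1 \lhd \cdots \lhd C_k$ responds to swapping or inserting entries in two adjacent rows. This is the part where Assaf--Searles' Kohnert tableaux from subsection~\ref{Kohnert Tableaux} do the real work: the labeling algorithm $\Label_\alpha$ gives a rigid combinatorial certificate that a diagram lies in $\KD(\alpha)$, and its compatibility with the $\lhd$ operation (conditions (3) and (4) of Definition~\ref{def: KT} encode precisely the left-key inequalities) is what lets one control $\KL$ under local moves. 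Everything else — the $\sharp/\flat$ commutation algebra, the weight/excess bookkeeping, the final inverse check — I expect to be careful but routine once the recursive skeleton of step (2) is in place.
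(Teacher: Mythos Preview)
Your proposal rests on a recursion along Bruhat covers of $\alpha$, with base case $\alpha = \alpha^+$. This is \emph{not} the paper's approach, and as written it has a structural gap. First, the induction direction is inconsistent: you take $\alpha^+$ (the Bruhat-minimal rearrangement) as the base case, yet you define $\Phi_\alpha(T)$ in terms of $\Phi_\beta(T')$ for $\beta$ \emph{covering} $\alpha$, i.e.\ $\beta > \alpha$ --- so each recursive step moves \emph{away} from the base case, and the recursion never terminates. Second, and more fundamentally, the sets $\RSVT(\alpha)$ are nested along Bruhat order ($\gamma \leqslant \alpha \Rightarrow \RSVT(\gamma) \subseteq \RSVT(\alpha)$, Corollary~\ref{C: KD RSVT order}); they are not related by ``row-pair moves'' that transform elements of one set into elements of another. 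There is no evident local operation on a tableau $T$ that carries it from $\RSVT(\beta)$ to $\RSVT(\alpha)$ for a neighboring $\beta$ while admitting a matching reversible $K$-Kohnert move on the $\KKD$ side. Your step~(2) --- which you correctly flag as the main obstacle --- asks for a structure that the paper never establishes and that does not obviously exist.

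The paper's recursion is entirely different: it is \emph{column-based}, inducting on $\max(\alpha)$. A diagram pair $D$ is decomposed as a triple $(K_1, G_1, d)$ where $K_1, G_1 \subseteq [n]$ record column~1 and $d$ is the remainder shifted one column left; Theorems~\ref{T: KKD image} and~\ref{T: RSVT image} say $D \in \KKD(\alpha)$ (resp.\ $\RSVT(\alpha)$) if and only if $d \in \KKD(\overline{M(\alpha,K_1)})$ (resp.\ $\RSVT(\overline{M(\alpha,L_1)})$) together with purely local conditions on column~1. The composition $\overline{M(\alpha,K_1)}$ has strictly smaller maximum entry, so the induction terminates. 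The operators $\sharp_g$ and $\flat_e$ act only on column~1 --- each moves a single Kohnert cell up or down within that column while keeping $d$ fixed --- not on row-pairs. The Bruhat order enters only as a tool to define $M(\alpha,S)$ and to compare first columns; it is not the inductive parameter. With this column recursion in hand, the proof of Lemma~\ref{L: Phi well-defined} reduces to verifying (via Lemma~\ref{L: flat well-defined} and condition~(3) of Theorem~\ref{T: RSVT image}) that each $\flat_{e_i}$ is defined when applied in decreasing order of $e_i$, and then checking that the output meets the four conditions of Theorem~\ref{T: KKD image} --- all short computations once the column-stripping framework is in place.
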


Now we restate our main result.
The proof is also in section~\ref{section.recursive}.

\begin{thm}
The maps $\Psi_\alpha: \KKD(\alpha) \to \RSVT(\alpha)$ 
and $\Phi_\alpha: \RSVT(\alpha) \to \KKD(\alpha)$
are mutually inverses of each other.
Moreover, they preserve $\wt(\cdot)$ and $\ex(\cdot)$.
\end{thm}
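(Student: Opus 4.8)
The plan is to prove Theorem~\ref{T: Main} by first proving the two well-definedness lemmas (Lemmas~\ref{L: Psi well-defined} and~\ref{L: Phi well-defined}) and then showing the maps invert each other, all by a single induction on the Bruhat order on weak compositions introduced in section~\ref{section.Bruhat}. The base case is $\alpha$ itself minimal, where $\KKD(\alpha)$ and $\RSVT(\alpha)$ are forced to consist only of the key diagram $D_\alpha$ and the key tableau $\key(\alpha)$, respectively, so both maps are the identity and everything is trivial. For the inductive step I would use the recursive descriptions of $\KKD(\alpha)$ and $\RSVT(\alpha)$ promised in section~\ref{section.describe KKD RSVT}: each diagram pair in $\KKD(\alpha)$ (resp.\ tableau in $\RSVT(\alpha)$) is obtained from one associated to a Bruhat-smaller composition $\beta$ by applying a single $K$-Kohnert move (resp.\ a single extra-number insertion / the corresponding diagram-pair move). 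The key bookkeeping tool is the pair of auxiliary operators $\sharp_g$ and $\flat_e$ from section~\ref{section.flat.sharp}: the recursive description of $\Psi_\alpha$ and $\Phi_\alpha$ in section~\ref{section.recursive} should express the action of $\Psi_\alpha$ on a freshly-moved diagram in terms of $\Psi_\beta$ on the old one, conjugated by $\sharp_g$, and symmetrically for $\Phi_\alpha$ and $\flat_e$.

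Concretely I would proceed in the following order. First, establish the structural lemmas about $\sharp_g$ and $\flat_e$: that when defined they map $\KD(\alpha)$ to $\KD(\alpha)$, that a single raising operator $\sharp_g$ and a single lowering operator $\flat_e$ are mutually inverse on the appropriate domains, and that they interact predictably with the Kohnert/$K$-Kohnert moves (commutation or a controlled failure-to-commute). Second, prove by induction that $\sharp_G((K,\emptyset))$ is always defined when $(K,G)\in\KKD(\alpha)$ — this is exactly Lemma~\ref{L: Psi well-defined} modulo checking the left-key condition $\KL \leqslant \key(\alpha)$, which should follow because $\LL(\Psi_\alpha(D)) = (L,\emptyset) \in \KD(\alpha) = \RSSYT(\alpha)$ by Remark~\ref{natual_map}, and the left key only depends on the leading numbers. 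Symmetrically prove Lemma~\ref{L: Phi well-defined}. Third, with both maps now known to be well-defined, show $\Phi_\alpha \circ \Psi_\alpha = \id$ and $\Psi_\alpha \circ \Phi_\alpha = \id$ by comparing the recursive descriptions step by step: an application of a $K$-Kohnert move on the $\KKD$ side corresponds under $\Psi_\alpha$ to a specific move on the $\RSVT$ side, and $\Phi_\alpha$ undoes exactly that move, using the inductive hypothesis for the Bruhat-smaller composition together with the $\sharp_g$–$\flat_e$ inversion. Finally, $\wt$-preservation is immediate since $\Psi_\alpha(K,G) = (L, (K\sqcup G)-L)$ has the same underlying cell set $K \sqcup G$ as $(K,G)$, and moving cells within columns via $\sharp_G$ never changes row-occupancy counts... wait, it does change which rows are occupied; rather, the total cell count $|K\sqcup G|$ is preserved and $\ex = |G| = |K\sqcup G| - |K| = |K\sqcup G| - |L|$ is preserved because $|K| = |L|$ (both operators only move cells, never create or destroy them). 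For $\wt$ one checks that the combined multiset of occupied rows of $L \sqcup E$ equals that of $K \sqcup G$, which holds because $\sharp_G$ is built precisely so that the cells it vacates in $K$ are refilled by the ghost cells of $G$ at those rows.

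The main obstacle I expect is the third step: verifying that the per-move correspondence between $K$-Kohnert moves and $\RSVT$ modifications is genuinely bijective and compatible with the recursions, i.e.\ that $\Psi_\alpha$ and $\Phi_\alpha$ as defined via $\sharp_G$ and $\flat_E$ on the \emph{whole} diagram agree with the move-by-move recursive descriptions. This requires a careful commutation analysis: applying $\sharp_G$ all at once versus interleaving single raising operators with the Kohnert moves that build up $D$ from $D_\alpha$. The subtlety is that $\sharp_g$ chooses the \emph{largest} available $r' \leqslant r$ subject to staying in $\KD(\alpha)$, and one must show this greedy choice is consistent with the order in which moves and ghost cells were introduced — essentially a confluence/independence argument for the raising operators, which is where the Kohnert-tableau machinery (Definition~\ref{def: KT} and $\Label_\alpha$) earns its keep, since translating a diagram-pair statement into a statement about the labels $\{i : \alpha_i \geqslant c\}$ makes the "staying in $\KD(\alpha)$" constraint explicit and checkable. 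Everything else (base case, well-definedness given the recursions, weight and excess preservation) I expect to be routine once the recursive descriptions of section~\ref{section.recursive} are in hand.
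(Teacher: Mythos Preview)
Your proposal rests on a misreading of sections~\ref{section.describe KKD RSVT} and~\ref{section.recursive}. The recursions there are \emph{column-wise}, not move-wise: Theorem~\ref{T: KKD image} says $(K_1,G_1,d)\in\KKD(\alpha)$ exactly when $d\in\KKD(\overline{M(\alpha,K_1)})$ plus conditions on $K_1,G_1$, where $d$ is the diagram pair obtained by deleting column~1 and shifting left. The induction parameter is $\max(\alpha)$, which drops by one when you pass from $\alpha$ to $\overline{M(\alpha,K_1)}$. Likewise, the recursive description of $\Psi_\alpha$ in section~\ref{section.recursive} is: compute $t=\Psi_\gamma(d)$ for $\gamma=\overline{M(\alpha,K_1)}$, then apply the single-cell operators $\sharp_{g_1},\dots,\sharp_{g_{|G_1|}}$ (for $g_i\in G_1$) to the first column of $(K_1,\emptyset,t)$. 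Nothing in the paper tracks a single $K$-Kohnert move or relates $\Psi_\alpha$ to $\Psi_\beta$ for a Bruhat-smaller $\beta$ of the same shape.

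Your proposed Bruhat-order induction has a concrete gap at the inductive step. You want to say a diagram pair in $\KKD(\alpha)$ comes from one in $\KKD(\beta)$ with $\beta<\alpha$ via one $K$-Kohnert move, and that $\Psi_\alpha$ intertwines this move with some identifiable operation on the $\RSVT$ side. But a $K$-Kohnert move does not change the ambient $\alpha$, and $\Psi_\alpha$ is defined globally via $\sharp_G$, whose choices (``largest $r'$ such that the result stays in $\KD(\alpha)$'') depend on $\alpha$. Even granting that every $D\in\KKD(\alpha)$ lies in some $\KKD(\gamma)$ with $\gamma\leqslant\alpha$ (Lemma~\ref{L: Reverse K-Kohnert}), you would still need $\Psi_\alpha|_{\KKD(\gamma)}=\Psi_\gamma$, which is not obvious and not how the paper is organized. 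The confluence problem you flag as the main obstacle is real, and the paper sidesteps it entirely by never comparing $\Psi_\alpha$ across different $\alpha$ of the same shape.

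What the paper actually does for the mutual-inverse step is short once the column-wise recursion is in place: writing $D=(K_1,G_1,d)$ and $T=\Psi_\alpha(D)=(L_1,E_1,t)$ with $t=\Psi_\gamma(d)$, one has $E_1=\{k_1,\dots,k_{|G_1|}\}$ where $\sharp_{g_i}(D^{i-1})=(D^i,k_i)$. Applying $\Phi_\alpha$ to $T$ runs $\flat_{e_1},\dots,\flat_{e_{|G_1|}}$ on the first column with the $e_i$ being the $k_i$ in decreasing order; Lemma~\ref{L: many flat after many sharp} (a commutation result for the single-cell operators, proved via Lemma~\ref{L: sharp commute 1}) says this exactly undoes the $\sharp_{g_i}$ sequence, returning $(K_1,\emptyset,t)$ and recovering $G_1$ as the set of outputs. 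The inductive hypothesis on $\max(\alpha)$ gives $\Phi_\gamma(t)=d$, and you are done. Your instinct that $\sharp_g$ and $\flat_e$ are the key tools is right, and your $\wt/\ex$ argument is essentially correct; but the scaffolding should be column recursion and induction on $\max(\alpha)$, not Bruhat order and $K$-Kohnert moves.
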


\section{Bruhat Order on Weak Compositions}
\label{section.Bruhat}
Partial order on weak compositions has been studied in~\cite{A,A19,FG,FGPS}. In this section, we give a definition via the key tableau associated to the weak composition. In subsection~\ref{subsection.bruhat}, we also study $m(\alpha,S)$ (resp. $M(\alpha,S)$) which is the unique minimum (resp. maximum) weak composition in a certain set of weak compositions. In subsection~\ref{subsection.left.swap}, we used properties of $M(\alpha,S)$ to show that our Bruhat order is equivalent to the left swap order defined in~\cite{A,A19}, which implies that the Bruhat order is equivalent to the inclusion order on $\KD(\alpha),\KKD(\alpha), \RSSYT(\alpha)$ and $\RSVT(\alpha)$.

\subsection{Bruhat order}\label{subsection.bruhat}

We may define a partial order on all weak compositions.
\begin{defn}
Let $\alpha, \gamma$ be two weak compositions. 
We define $\alpha \leqslant \gamma$ if 
$\key(\alpha)$ and $\key(\gamma)$ have the same shape
and $\key(\alpha) \leqslant \key(\gamma)$ entry-wise.
This order is called the \definition{Bruhat order}.  
\end{defn}

\begin{defn}
For $S \subseteq [n]$, 
let $\mathbbm{1}_S$ be the weak composition
whose $i^{th}$ entry is 1 if $i \in S$
and 0 otherwise.
For a weak composition $\alpha$,
the \definition{support} of $\alpha$
is the set
$\{i |\, \alpha_i > 0\}$, denoted as $\supp(\alpha)$.
\end{defn}

Weak compositions with only 0s and 1s are in
natural bijection with subsets of $[n]$.
The bijections are $S \mapsto \mathbbm{1}_S$
and $\alpha \mapsto \supp(\alpha)$.
With the Bruhat order above, 
we may define a partial order
on subsets of $[n]$.
\begin{defn}
For $S, S' \subseteq [n]$, 
we say $S \leqslant S'$ if 
$\mathbbm{1}_S \leqslant \mathbbm{1}_{S'}$.
\end{defn}

We have other alternative descriptions 
of this order.

\begin{lemma}\label{lem:subset_order}
Take $S, S' \subseteq [n]$.
The following are equivalent:
\begin{enumerate}
\item $S \leqslant S'$
\item $|S| = |S'|$ and for each $j \in [|S|]$,
the $j^{th}$ largest number in $S$ is at most
the $j^{th}$ largest number in $S'$.
\item $|S| = |S'|$ and for each $s \in S$,
$|[s, n] \cap S| \leqslant |[s, n] \cap S'|$.
\end{enumerate}
\end{lemma}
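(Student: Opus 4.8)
The plan is to prove the equivalence of the three conditions in Lemma~\ref{lem:subset_order} by a cycle of implications, (1) $\Rightarrow$ (2) $\Rightarrow$ (3) $\Rightarrow$ (1), exploiting the explicit description of $\key(\mathbbm{1}_S)$ as a single-column tableau. First I would unpack the definition: since $\mathbbm{1}_S$ has all entries in $\{0,1\}$, the partition $(\mathbbm{1}_S)^+$ is a single column of height $|S|$, and $\key(\mathbbm{1}_S)$ is the column filled with the elements of $S$ written in decreasing order from top to bottom. So $\key(\mathbbm{1}_S) \leqslant \key(\mathbbm{1}_{S'})$ entry-wise \emph{requires} the two columns to have equal height, i.e. $|S| = |S'|$, and then the entry-wise comparison says precisely that the $j^{th}$ entry from the top of the $S$-column is $\leqslant$ the $j^{th}$ entry from the top of the $S'$-column. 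Since the $j^{th}$ entry from the top is the $j^{th}$ largest element, this is exactly condition (2). Thus (1) $\Leftrightarrow$ (2) is essentially immediate once the shapes of the key tableaux are identified; the only thing to note is that the Bruhat order as defined already bakes in the "same shape" hypothesis, which here forces $|S| = |S'|$.

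Next I would show (2) $\Leftrightarrow$ (3). Assume $|S| = |S'| =: k$. Write $S = \{s_1 > s_2 > \dots > s_k\}$ and $S' = \{s'_1 > s'_2 > \dots > s'_k\}$, so condition (2) reads $s_j \leqslant s'_j$ for all $j \in [k]$. For condition (3), observe that for $s \in S$, if $s = s_j$ then $[s,n] \cap S = \{s_1, \dots, s_j\}$ has size $j$, so $|[s,n]\cap S| = j$; meanwhile $|[s_j, n] \cap S'| = |\{i : s'_i \geqslant s_j\}|$, which is the number of elements of $S'$ that are $\geqslant s_j$. So (3) says: for every $j$, the number of elements of $S'$ that are $\geqslant s_j$ is at least $j$, equivalently $s'_j \geqslant s_j$ (since $s'_j$ is exactly the $j^{th}$ largest element of $S'$, we have $|\{i : s'_i \geqslant s_j\}| \geqslant j \iff s'_j \geqslant s_j$). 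This gives (3) $\Leftrightarrow$ (2) directly; I would write out this counting argument carefully, being explicit that $|\{i : s'_i \geqslant t\}| \geqslant j$ iff $s'_j \geqslant t$, which is just the definition of the $j^{th}$ largest element. One should also double-check the corner cases: (3) as stated quantifies only over $s \in S$, but the nontrivial constraints occur exactly at the elements of $S$, so no generality is lost.

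The main obstacle — such as it is — is bookkeeping rather than depth: one must be careful about the top-to-bottom versus bottom-to-top conventions in the English-convention column (the key tableau's columns strictly decrease top to bottom, so "$j^{th}$ from top" = "$j^{th}$ largest"), and about the fact that condition (1) silently includes the equal-cardinality hypothesis via "the same shape." I would therefore lead with a short paragraph pinning down that $\key(\mathbbm{1}_S)$ is the decreasing column on $S$ and that $\mathbbm{1}_S \leqslant \mathbbm{1}_{S'}$ forces $|S| = |S'|$, and then the three-way equivalence follows from the two elementary observations above. No induction or appeal to the deeper structure of the paper (Kohnert moves, tableaux, etc.) is needed here; this lemma is purely a combinatorial reformulation, and the cleanest writeup is the implication cycle $(1)\Rightarrow(2)\Rightarrow(3)\Rightarrow(1)$ with each arrow being a one- or two-line counting argument.
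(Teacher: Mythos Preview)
Your proposal is correct and follows essentially the same approach as the paper's proof: both identify $\key(\mathbbm{1}_S)$ as the single decreasing column on $S$ to get $(1)\Leftrightarrow(2)$ immediately, and both reduce $(2)\Leftrightarrow(3)$ to the observation that if $s$ is the $j^{th}$ largest element of $S$ then $|[s,n]\cap S|=j$ and $|[s,n]\cap S'|\geqslant j$ is equivalent to the $j^{th}$ largest element of $S'$ being at least $s$. Although you frame it as a cycle $(1)\Rightarrow(2)\Rightarrow(3)\Rightarrow(1)$, your actual argument establishes the biconditionals $(1)\Leftrightarrow(2)$ and $(2)\Leftrightarrow(3)$ directly, exactly as the paper does.
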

\begin{proof}
We first prove statement 1 and 2 are equivalent.
By definition $S \leqslant S'$ if and only if 
$\key(\mathbbm{1}_S) \leqslant \key(\mathbbm{1}_{S'})$.
The number on row $j$ column 1 of $\key(\mathbbm{1}_S)$ 
(resp. $\key(\mathbbm{1}_{S'})$)
is the $j^{th}$ largest number in $S$ (resp. $S'$). 
Thus, $\key(\mathbbm{1}_S) \leqslant \key(\mathbbm{1}_{S'})$
is equivalent to statement 2.

Next, we show the statement 2 and 3 are equivalent.
Assume the statement 2 is true. 
Let $s \in S$ be the $j^{th}$ largest number in $S$.
Then $|[s, n] \cap S| = j$.
Since the $j^{th}$ largest number in $S'$ is at least $s$,
$|[s, n] \cap S'| \geqslant j$.
Now assume statement 3 is true.
Let $s$ be the 
$j^{th}$ largest number in $S$.
We know there are at least $j$ numbers 
in $[s, n] \cap S'$,
so the $j^{th}$ largest number in $S'$
is at least $s$.
\end{proof}

Take $S \subseteq [n]$ and a weak composition $\alpha$.
Consider the set $\{ \gamma: \gamma \geqslant \alpha, \supp(\gamma) \subseteq S\}$.
Let $m(\alpha, S)$ be the unique minimum element in the set, if it exists.
Later, we will show $m(\alpha,S)$ exists as long as
the set is non-empty.
First, we introduce an algorithm 
to compute $m(\alpha, S)$ or assert
it does not exist.
Initialize $list$ to be an empty list
and initialize $\sigma$ to be
the weak composition with all 0s.
Iterate over $i  = 1, \dots, n$.
Perform the following two processes in each iteration:
\begin{itemize}
\item
(Adding process):
If $\alpha_i > 0$, 
then add $\alpha_i$ to $list$.
\item
(Removing process):
If $i \in S$ and $list$ is non-empty,
then remove $\max(list)$ from $list$
and assign it to $\sigma_i$.
\end{itemize}
After all iterations,
if $list$ is empty,
then $m(\alpha, S)$ is $\sigma$.
Otherwise, such $m(\alpha, S)$ does not exist.
 
\begin{exa}
Let $n = 7$, $\alpha = (1,3,0,2,0,0,2)$ and $S = \{3,4,5,6,7\}$.
Then we trace $\sigma$ and $list$ during the algorithm:
\begin{enumerate}
\item[$\bullet$] Before iteration 1: $\sigma = (0,0,0,0,0,0,0)$; $list$ is empty. 
\item[$\bullet$] After iteration 1:  $\sigma = (0,0,0,0,0,0,0)$; $list$ contains $1$.
\item[$\bullet$] After iteration 2:  $\sigma = (0,0,0,0,0,0,0)$; $list$ contains $1,3$. 
\item[$\bullet$] After iteration 3:  $\sigma = (0,0,3,0,0,0,0)$; $list$ contains $1$. 
\item[$\bullet$] After iteration 4:  $\sigma = (0,0,3,2,0,0,0)$; $list$ contains $1$. 
\item[$\bullet$] After iteration 5:  $\sigma = (0,0,3,2,1,0,0)$; $list$ is empty. 
\item[$\bullet$] After iteration 6:  $\sigma = (0,0,3,2,1,0,0)$; $list$ is empty. 
\item[$\bullet$] After iteration 7:  $\sigma = (0,0,3,2,1,0,2)$; $list$ is empty. 
\end{enumerate}
Since the list is empty after the iterations, 
the algorithm outputs $m(\alpha, S) = (0,0,3,2,1,0,2)$.
\end{exa}

We take steps to show this algorithm is correct.
We start with the following observation,
which connects this algorithm with the $\lhd$
operator in Definition~\ref{def:left_key}.

\begin{lemma}
\label{lem:algorithm_lhd}
Assume the algorithm outputs $m(\alpha, S) = \sigma$.
Let $T_c$ (resp. $A_c$) be the set consisting of numbers
in column $c$ of $\key(\sigma)$ (resp. $\key(\alpha)$).
Then $T_c = S \lhd A_c$.
\end{lemma}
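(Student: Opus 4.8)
\textbf{Proof plan for Lemma~\ref{lem:algorithm_lhd}.}

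The plan is to track, column by column from right to left, exactly which numbers the algorithm places in $\key(\sigma)$, and to verify that this matches the recursive description of $S \lhd A_c$. The key observation is that the algorithm processes rows $i = 1, \dots, n$ while maintaining $list$, which at any point in time consists precisely of the values $\alpha_i$ that have been "added" but not yet "removed." Since $\key(\alpha)$ has the number $i$ in exactly columns $1$ through $\alpha_i$, and similarly $\key(\sigma)$ has $i$ in columns $1$ through $\sigma_i$, the quantity $A_c$ is the set of $i$ with $\alpha_i \geqslant c$, and $T_c$ is the set of $i$ with $\sigma_i \geqslant c$. So I must show that $\{i : \sigma_i \geqslant c\} = S \lhd A_c$ for every $c$.

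First I would set up a single run of the algorithm and, for a fixed column index $c$, restrict attention to the sub-run where we only track membership in $list$ of values $\geqslant c$ (equivalently, only the elements of $A_c$ get added, since $\alpha_i \geqslant c$ iff $i \in A_c$). In the removing process at step $i \in S$, the algorithm removes $\max(list)$; this removed value is $\geqslant c$ precisely when $list$ still contains some element $\geqslant c$, and in that case $\sigma_i \geqslant c$, i.e. $i \in T_c$. Thus $T_c$ is determined by a greedy matching: scan $i = 1, \dots, n$; when $i \in A_c$ push $i$ onto a stack, and when $i \in S$ with the stack nonempty, pop the top (largest pushed, still-unmatched) element and declare $i$ a "hit." The hit positions are exactly $T_c$. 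Next I would unwind the definition of $S \lhd A_c$: writing $A_c = \{a_1 < a_2 < \dots < a_m\}$, the element $b_k$ of $S \lhd A_c$ is the smallest element of $S$ that is $\geqslant a_k$ and $> b_{k-1}$. Since both $\key(\alpha)$ and $\key(\sigma)$ are columns that strictly increase downward (English convention: larger numbers lower), I should sort everything and compare the two greedy procedures. I would show by induction on $k$ (from $k = m$ down to $k = 1$, matching the largest elements of $A_c$ first, which is how the stack-based algorithm naturally pairs them) that the $k$-th smallest hit equals $b_k$: when $a_m$ is pushed it is matched to the next $i \in S$ with $i \geqslant a_m$, which is $b_m$; having removed $b_m$ from consideration, $a_{m-1}$ is matched to the next available $i \in S$ with $i \geqslant a_{m-1}$, which is the smallest such not already used, i.e. $b_{m-1}$; and so on. The fact that $b_1 < b_2 < \dots < b_m$ (guaranteed by Definition~\ref{def:left_key}) is exactly what makes the greedy stack-matching produce each $b_k$ without collision, and the hypothesis that the algorithm terminated successfully ($list$ empty at the end) ensures every $a_k$ does get matched, so no element of $A_c$ is "lost."

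The main obstacle I anticipate is the bookkeeping that reconciles the \emph{stack} (LIFO) behavior of the algorithm's $\max(list)$ removal with the \emph{smallest-available} phrasing in the definition of $\lhd$. These agree, but only because of a monotonicity fact: at the moment $i \in S$ triggers a removal, every value currently in $list$ is $\leqslant i$ would be false — rather, the values in $list$ are a subset of $\{1, \dots, i\}$, and $\max(list)$ is the largest index $\leqslant i$ whose $\alpha$-value has not yet been assigned a column-$c$ slot. I would state and prove a small invariant — e.g., "after step $i$, $list$ restricted to values $\geqslant c$ equals $\{j \leqslant i : j \in A_c,\ j \text{ not yet matched}\}$, and the matched elements of $A_c$ are in bijection with the hits among $\{1,\dots,i\}$" — and then the identification with the $b_k$'s follows by comparing this invariant against the inductive unrolling of $C_1 \lhd C_2$ with $C_1 = S$ (viewed as the column $\key(\mathbbm{1}_S)$) and $C_2 = A_c$. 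Once the invariant is in place the rest is a direct match of two greedy constructions, so I expect the argument to be short after the invariant is correctly formulated.
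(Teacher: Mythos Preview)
Your overall strategy---project the algorithm to column $c$ by tracking only whether $list$ contains a value $\geqslant c$, then verify that the resulting ``hit'' set satisfies the recursive definition of $S \lhd A_c$---is exactly the paper's approach. The paper makes the same opening observation: $i \in A_c$ iff a value $\geqslant c$ is added at step $i$, and $i \in T_c$ iff a value $\geqslant c$ is removed at step $i$.

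The gap is in your induction. You propose to induct from $k = m$ downward, asserting that ``when $a_m$ is pushed it is matched to the next $i \in S$ with $i \geqslant a_m$, which is $b_m$.'' The first clause is true (in the LIFO model $a_m$ sits on top until popped), but the identification with $b_m$ is false. Take $S = \{3,4\}$ and $A_c = \{1,2\}$: by Definition~\ref{def:left_key} one has $b_1 = 3$, $b_2 = 4$, yet the LIFO match of $a_2 = 2$ is the \emph{first} element of $S$ that is $\geqslant 2$, namely $3 \neq b_2$. Your top-down greedy produces the same \emph{set} $\{3,4\}$ but pairs $a_k$ with $b_{m+1-k}$ rather than $b_k$; proving the two greedy procedures have the same image is precisely the content of the lemma, so appealing to it is circular. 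The invariant you sketch does not close this gap, and your paragraph about ``$\max(list)$ is the largest index $\leqslant i$'' conflates the values $\alpha_j$ stored in $list$ with the indices $j$.

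The fix is simple and is what the paper does: run the induction from $k = 1$ upward, matching the definition of $\lhd$. Writing $T_c = \{t_1 < \cdots < t_s\}$, show directly that $t_j$ is the smallest element of $S$ with $t_j \geqslant a_j$ and $t_j > t_{j-1}$. Both parts are one-line counting arguments using only the counter of values $\geqslant c$: for $t_j \geqslant a_j$, note that by step $t_j$ the counter has decremented $j$ times hence incremented at least $j$ times; for minimality, any $i \in S$ with $a_j \leqslant i < t_j$ and $i > t_{j-1}$ would see a positive counter (at least $j$ increments, exactly $j-1$ decrements so far) and hence be a hit, contradiction. No stack or matching is needed.
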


\begin{proof}
We know $i \in A_c$ if and only if 
during the adding process of the iteration $i$,
a number at least $c$ is added to $list$.
Similarly, $i \in T_c$ if and only if
during the removing process of the iteration $i$,
a number at least $c$ is removed from $list$.

Assume $T_c = \{t_1<t_2<\dots <t_s\}$ 
and $A_c =\{a_1<a_2<\dots < a_s\}$.
During the adding process of iteration $a_1$,
the algorithm puts $\alpha_{a_1}$ into $list$.
This is the first time that $list$
gains a number at least $c$.
Thus, $t_1 \geqslant a_1$.
Moreover, assume there exists $t \in S$
such that $a_1 \leqslant t < t_1$.
During the removing process of iteration $t$,
$list$ has a number at least $c$.
The algorithm will remove a number at least $c$
from $list$,
contradicting to $t \notin T_c$.
Thus, $t_1$ is the smallest in $S$
with $t_1 \geqslant a_1$.

Now consider $t_j$ with $j > 1$.
During the removing process of $t_j$, 
we remove a number at least $c$ for the $j^{th}$ time.
Thus, we have added at least $j$ such numbers to $list$,
so $a_j \leqslant t_j$.
Now assume there is $t < t_j$ 
such that $t \in S$, $t \geqslant a_j$, and $t > t_{j - 1}$.
During the removing process of iteration $t$,
there is a number at least $c$ in $list$,
so such a number will be removed. 
We have a contradiction since $t \notin T_c$.
Thus, $t_j$ is the smallest in $S$
such that $t_j \geqslant a_j$ and $t_j > t_{j - 1}$.
\end{proof}

Next, we investigate the condition for the algorithm
to assert $m(\alpha, S)$ does not exist. 

\begin{lemma}
The following are equivalent:
\begin{enumerate}
\item $|S| \geqslant |\supp(\alpha)|$. 
In addition, if we let $S' \subseteq S$ 
consists of the largest numbers in $S$
with $|S'| = |\supp(\alpha)|$,
then $S' \geqslant \supp(\alpha)$.
\item The algorithm asserts $m(\alpha, S)$ exists. 
(i.e. The $list$ is empty when the algorithm ends.)
\item There is a weak composition $\gamma$ 
such that $\supp(\gamma) \subseteq S$
and $\gamma \geqslant \alpha$.
\end{enumerate}
\end{lemma}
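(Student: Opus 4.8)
The plan is to prove the cycle of implications $(2)\Rightarrow(3)\Rightarrow(1)\Rightarrow(2)$, since $(2)\Rightarrow(3)$ is almost immediate from the preceding results and $(1)$ is the most ``static'' characterization, convenient as a hinge. For $(2)\Rightarrow(3)$: if the $list$ is empty when the algorithm terminates, then by construction the output $\sigma$ is a weak composition with $\supp(\sigma)\subseteq S$ (every nonzero entry $\sigma_i$ was assigned during a removing process, which requires $i\in S$), so it remains only to check $\sigma\geqslant\alpha$. This is exactly where Lemma~\ref{lem:algorithm_lhd} does the work: for each column $c$ the set of entries of $\key(\sigma)$ in column $c$ is $S\lhd A_c$, where $A_c$ is the column-$c$ set of $\key(\alpha)$; by the definition of $\lhd$ (Definition~\ref{def:left_key}), the $j^{th}$ smallest element of $S\lhd A_c$ is at least the $j^{th}$ smallest element of $A_c$, which translates directly into $\key(\sigma)\leqslant\key(\alpha)$ entry-wise after one checks the shapes agree (both have $|\supp(\alpha)|$ boxes in column $1$, etc.). Hence $\sigma\geqslant\alpha$ and we may take $\gamma=\sigma$.

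For $(3)\Rightarrow(1)$: suppose $\gamma\geqslant\alpha$ with $\supp(\gamma)\subseteq S$. Then $\key(\gamma)$ and $\key(\alpha)$ have the same shape, so in particular their first columns have the same size, giving $|\supp(\gamma)|=|\supp(\alpha)|$; since $\supp(\gamma)\subseteq S$ this yields $|S|\geqslant|\supp(\alpha)|$. Moreover the first column of $\key(\gamma)$ is (the decreasing arrangement of) $\supp(\gamma)$ and of $\key(\alpha)$ is $\supp(\alpha)$, and $\key(\alpha)\leqslant\key(\gamma)$ entry-wise on the first column says precisely that the $j^{th}$ largest element of $\supp(\alpha)$ is at most the $j^{th}$ largest element of $\supp(\gamma)$; by Lemma~\ref{lem:subset_order}(2) this is $\supp(\alpha)\leqslant\supp(\gamma)$. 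Finally, since $\supp(\gamma)\subseteq S$ with $|\supp(\gamma)|=|\supp(\alpha)|$, taking $S'\subseteq S$ to be the $|\supp(\alpha)|$ largest elements of $S$ gives $S'\geqslant\supp(\gamma)$ (each of the $j$ largest elements of $S'$ is at least the $j$-th largest of $\supp(\gamma)$, using Lemma~\ref{lem:subset_order}(2) again), and composing with $\supp(\alpha)\leqslant\supp(\gamma)$ and transitivity of the Bruhat order on subsets yields $S'\geqslant\supp(\alpha)$, which is statement (1).

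For $(1)\Rightarrow(2)$, I would track the quantity $|list|$ through the algorithm and show it is always $0$ at the end assuming (1). The key invariant: after iteration $i$, the multiset $list$ consists of exactly those values $\alpha_k$ with $k\leqslant i$, $\alpha_k>0$, and $k$ not yet ``matched'' to an index of $S$ in $[1,i]$; the number of matched indices is $\min(|\supp(\alpha)\cap[1,i]|,\ |S\cap[1,i]|)$ because the removing process fires whenever $i\in S$ and $list$ is nonempty, so matches accumulate greedily. Thus $list$ is nonempty at the end iff there is some prefix deficiency, i.e. iff $|S|<|\supp(\alpha)|$ \emph{or} $|S\cap[s,n]|<|\supp(\alpha)\cap[s,n]|$ for some $s$; using Lemma~\ref{lem:subset_order}(3), the negation of this latter condition (together with $|S|\geqslant|\supp(\alpha)|$) is exactly $S'\geqslant\supp(\alpha)$ for $S'$ the top $|\supp(\alpha)|$ elements of $S$. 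So (1) forces the list empty. The main obstacle I anticipate is the bookkeeping in this last step: making the greedy-matching invariant precise and relating a ``nonempty list at termination'' to the suffix-counting criterion $(3)$ of Lemma~\ref{lem:subset_order} cleanly, rather than via a tangle of cases. A clean way around it may be to reindex from $n$ down to $1$ and observe that $|list|$ after processing indices $\geqslant s$ equals $\max_{s\leqslant s'\leqslant n}\bigl(|\supp(\alpha)\cap[s',n]| - |S\cap[s',n]|\bigr)^{+}$ plus the contribution below, turning the whole argument into a single monotone comparison of partial sums.
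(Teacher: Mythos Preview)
Your cycle $(2)\Rightarrow(3)\Rightarrow(1)\Rightarrow(2)$ is a sound organization, and the first two implications are correct (with one slip: in $(2)\Rightarrow(3)$ you write $\key(\sigma)\leqslant\key(\alpha)$ where you mean $\geqslant$, since $\sigma\geqslant\alpha$ is the goal). The paper instead shows that the negation of $(1)$ forces the negation of each of $(2)$ and $(3)$, and then that $(1)$ implies both; your direct arguments for $(2)\Rightarrow(3)$ and $(3)\Rightarrow(1)$ are fine and arguably tidier than the paper's contrapositives.

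The genuine gap is in $(1)\Rightarrow(2)$. Your claimed invariant---that the number of removals through step $i$ equals $\min\bigl(|\supp(\alpha)\cap[1,i]|,\,|S\cap[1,i]|\bigr)$---is false: take $\supp(\alpha)=\{2\}$ and $S=\{1\}$; after both iterations there have been zero removals, yet the minimum is $1$. An index in $S$ can only remove something that was added \emph{earlier}, so the matching must respect order and is not a bare minimum of prefix counts. Your fallback $\max$-of-partial-sums idea is on the right track (it is Lindley's recurrence $L_i=\max(L_{i-1}+a_i-s_i,0)$ with solution $L_n=\max_{0\leqslant j\leqslant n}\sum_{k>j}(a_k-s_k)$), but the statement you give is garbled: the algorithm runs from $1$ to $n$, and the relevant suffix sums are over $[j,n]$, not a reindexed pass. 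The paper avoids this bookkeeping entirely. Assuming the list is nonempty at the end, it takes the largest $j$ such that the list stays nonempty from iteration $j$ onward; then the list was empty just before step $j$ and something was added, so $j\in\supp(\alpha)$. From step $j$ to step $n$ the list gains exactly $|\supp(\alpha)\cap[j,n]|$ elements and, being never empty, loses exactly $|S\cap[j,n]|$. Now $S'\geqslant\supp(\alpha)$ together with Lemma~\ref{lem:subset_order} and $S'\subseteq S$ gives $|\supp(\alpha)\cap[j,n]|\leqslant|S'\cap[j,n]|\leqslant|S\cap[j,n]|$, forcing the final list to be empty---a contradiction. This is shorter and sidesteps the matching invariant completely; I would recommend adopting it for this direction.
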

\begin{proof}

First, assume $|S| < |\supp(\alpha)|$.
We check the last two statements do not hold.
\begin{enumerate}
\setcounter{enumi}{1}
\item Consider the $i^{th}$ iteration.
First, 
the size of $list$ is increased by one 
if $i \in \supp(\alpha)$.
Next, the size of $list$ is fixed 
or decreased by one if $i \in S$.
Throughout this algorithm, 
$list$ gains $|\supp(\alpha)|$ numbers 
and loses at most $|S|$ numbers.
It is not empty when the algorithm ends. 
\item 
Assume such $\gamma$ exists.
We know $|\supp(\gamma)| = |\supp(\alpha)| > |S|$,
contradicting to $\supp(\gamma) \subseteq S$.
\end{enumerate}

Now assume $|S| \geqslant |\supp(\alpha)|$
and define $S'$ as above. 
Suppose $S'\ngeqslant \supp(\alpha)$. 
We check the last two statements in the lemma 
do not hold.
\begin{enumerate}
\setcounter{enumi}{1}
\item By Lemma~\ref{lem:subset_order}, there exists $j \in \supp(\alpha)$ such that 
$$
|[j,n]\cap \supp(\alpha)|>|[j,n]\cap S'| 
= |[j,n]\cap S|.
$$
Between the $j^{th}$ iteration and the last iteration inclusively,
$list$ gains $|[j,n]\cap \supp(\alpha)|$ numbers and loses
at most $|[j,n]\cap S|$ numbers. 
It is not empty when the algorithm ends. 
\item 
Assume such $\gamma$ exists.
By $\supp(\gamma) \subseteq S$,
we have $S' \geqslant \supp(\gamma) \geqslant \supp(\alpha)$.
Contradiction.
\end{enumerate}

Finally, assume $S' \geqslant \supp(\alpha)$.
We check the last two statements in the lemma
are true. 
\begin{enumerate}
\setcounter{enumi}{1}
\item Assume when the algorithm ends,
$list$ is not empty.
Find the largest $j$ such that
$list$ is not empty since the $j^{th}$ 
iteration.
First, we know $list$ is empty 
right before the $j^{th}$ iteration.
Second, we know a number is added to $list$
during the $j^{th}$ iteration, 
so $j \in \supp(\alpha)$.
By Lemma~\ref{lem:subset_order}, 
$|[j,n] \cap \supp(\alpha)| 
\leqslant |[j, n] \cap S'| = |[j, n] \cap S|$.
Between the $j^{th}$ iteration 
and the last iteration inclusively,
$list$ gains $|[j,n]\cap \supp(\alpha)|$ numbers.
Since $list$ is not empty since iteration $j$,
$list$ loses $|[j, n] \cap S|$ numbers.
Thus, $list$ is empty after the last iteration. 
\item 
From the previous statement,
we know the algorithm will produce a weak composition $\sigma$.
Just need to check $\supp(\sigma) \in S$ 
and $\sigma \geqslant \alpha$.
It is apparent that $\supp(\sigma)\subseteq S$. Since all positive numbers in $\alpha$ are assigned into $\sigma$, $\key(\sigma)$ and $\key(\alpha)$ are of the same shape. Next we show that $\sigma \geqslant \alpha$ by comparing $\key(\sigma)$ and $\key(\alpha)$ column-by-column: 
Let $T_c$ (resp. $A_c$) be the set consisting of numbers
in column $c$ of $\key(\sigma)$ (resp. $\key(\alpha)$).
By Lemma~\ref{lem:algorithm_lhd}, $T_c = S \lhd A_c$.
Thus, $T_c \geqslant A_c$.
\end{enumerate}
\end{proof}

Now we can prove the correctness of our algorithm.

\begin{lemma}\label{m_algorithm}
The algorithm correctly computes $m(\alpha, S)$.

In other words, consider the set 
$\{ \gamma: \gamma \geqslant \alpha, 
\supp(\gamma) \subseteq S\}$.
\begin{enumerate}
\item[$\bullet$] If $list$ is empty after the iterations,
then the output $\sigma$ is the unique minimum in the set. 
\item[$\bullet$] Otherwise,
the set is empty. 
\end{enumerate}

Moreover, the second case happens only when
the set is empty. 
\end{lemma}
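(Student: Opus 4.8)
The plan is to deduce this lemma almost entirely from the trichotomy established in the preceding (unnamed) lemma, which already tells us exactly when the algorithm terminates with an empty $list$, when the set $\{\gamma : \gamma \geqslant \alpha,\ \supp(\gamma) \subseteq S\}$ is non-empty, and matches these two conditions. So the only genuinely new content here is the \emph{uniqueness and minimality} of the output $\sigma$ in the first case. First I would dispatch the second bullet and the final sentence: by the equivalence of statements (2) and (3) in the previous lemma, the $list$ is non-empty at the end if and only if no $\gamma$ with $\supp(\gamma) \subseteq S$ and $\gamma \geqslant \alpha$ exists, i.e.\ precisely when the set is empty. This is immediate and needs no further argument.

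For the first bullet, assume the $list$ is empty at the end, so the algorithm produces $\sigma$; by the previous lemma $\sigma$ lies in the set, so it remains to show $\sigma \leqslant \gamma$ for every $\gamma$ in the set. The key tool is Lemma~\ref{lem:algorithm_lhd}, which identifies column $c$ of $\key(\sigma)$ with $S \lhd A_c$, where $A_c$ is column $c$ of $\key(\alpha)$. I would argue column-by-column: fix $\gamma$ with $\supp(\gamma) \subseteq S$ and $\gamma \geqslant \alpha$, and let $B_c$ denote column $c$ of $\key(\gamma)$. Since $\gamma \geqslant \alpha$, the diagrams $\key(\gamma)$ and $\key(\alpha)$ have the same shape and $B_c \geqslant A_c$ (as sets, in the sense of Lemma~\ref{lem:subset_order}); also every entry of $\key(\gamma)$ lies in $\supp(\gamma) \subseteq S$, so $B_c \subseteq S$. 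I then want the monotonicity/idempotency statement: if $B \subseteq S$ and $B \geqslant A$ with $|B| = |A|$, then $S \lhd A \leqslant B$. Granting this, we get column $c$ of $\key(\sigma)$, namely $S \lhd A_c$, is $\leqslant B_c$ = column $c$ of $\key(\gamma)$, for all $c$; hence $\key(\sigma) \leqslant \key(\gamma)$ entrywise and $\sigma \leqslant \gamma$. Uniqueness of the minimum is automatic once minimality is shown, since the Bruhat order is a partial order.

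The main obstacle is the lemma-level fact that $S \lhd A \leqslant B$ whenever $A \subseteq B \subseteq S$-type hypotheses hold — more precisely, whenever $B \subseteq S$, $|A| = |B|$, and $A \leqslant B$ in the order of Lemma~\ref{lem:subset_order}. This should be proved directly from the definition of $\lhd$ in Definition~\ref{def:left_key}: writing $A = \{a_1 < \cdots < a_m\}$ and $S \lhd A = \{b_1 < \cdots < b_m\}$, the $b_j$ are chosen greedily as the smallest elements of $S$ with $b_j \geqslant a_j$ and $b_j > b_{j-1}$; since $B \subseteq S$ and $B$ already witnesses (via $A \leqslant B$, using form (2) of Lemma~\ref{lem:subset_order}) a strictly increasing sequence in $S$ whose $j$th term is $\geqslant a_j$, a straightforward induction on $j$ shows the greedy choice $b_j$ cannot exceed the $j$th largest-from-the-bottom element of $B$; hence $b_j \leqslant (\text{$j$th element of }B)$ for all $j$, i.e.\ $S \lhd A \leqslant B$. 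This is the one place requiring real care, since one must track the greedy selection against an arbitrary competing increasing subsequence of $S$; everything else is bookkeeping on top of results already in the excerpt.
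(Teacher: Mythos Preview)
Your proposal is correct and matches the paper's proof essentially line-for-line: both dispatch the empty-set case via the previous lemma's equivalence $(2)\Leftrightarrow(3)$, then prove minimality of $\sigma$ column-by-column using $T_c = S \lhd A_c$ from Lemma~\ref{lem:algorithm_lhd}. The only cosmetic difference is that you extract the statement ``$B \subseteq S$, $|A|=|B|$, $A\leqslant B \Rightarrow S\lhd A \leqslant B$'' and prove it by direct induction on $j$, whereas the paper inlines the same argument as a proof by contradiction (assume $\gamma \ngeqslant \sigma$, find the smallest violating index $j$, and observe that $t_j'$ would have been a valid earlier greedy choice); these are contrapositives of one another.
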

\begin{proof}
If $list$ is not empty when the algorithm ends,
$\{ \gamma: \gamma \geqslant \alpha, 
\supp(\gamma) \subseteq S\} = \emptyset$
by the previous lemma.

Now assume $list$ is empty when the algorithm ends.
Then the set is non-empty: 
By the proof of the previous lemma, 
the output $\sigma$ is in this set. 
We check $\sigma$ is the least element.
Assume there is a $\gamma$ in the set with $\gamma \ngeqslant \sigma$.
Let $T_c'$ consists of numbers in column $c$ of $\key(\gamma)$.
Define $T_c$ and $A_c$ similarly for $\key(\sigma)$ and $\key(\alpha)$ respectively. 
Then we can find $c$ such that $T_c' \ngeqslant T_c$.
Let $t_j'$ be the $j^{th}$ smallest number in $T_c'$.
Define $t_j$ and $a_j$ similarly for $T_c$ and $A_c$.
Then we can find smallest $j$ such that $t_j' < t_j$.
Notice that $t_j' \geqslant a_j'$ and $t_j' \in S$.
Moreover, $t_j' > t_{j-1}' \geqslant t_{j-1}$ if $j > 1$.
We have a contradiction to the fact $T_c = S \lhd A_c$ 
from Lemma~\ref{lem:algorithm_lhd}.
\end{proof}

\begin{cor}
\label{C: m exists}
Let $\alpha$ be a weak composition
and $S \subseteq [n]$.
$m(\alpha, S)$ exists if and only if
\begin{enumerate}
\item[$\bullet$] $|S| \geqslant |\supp(\alpha)|$, and
\item[$\bullet$] Let $S' \subseteq S$ 
consists of the largest numbers in $S$
with $|S'| = |\supp(\alpha)|$.
Then $S' \geqslant \supp(\alpha)$.
\end{enumerate}
\end{cor}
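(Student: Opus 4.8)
The plan is to observe that this is nothing more than a repackaging of the equivalence ``(1) $\Leftrightarrow$ (2)'' in the lemma immediately preceding Lemma~\ref{m_algorithm}, combined with the correctness statement of Lemma~\ref{m_algorithm} itself. First I would recall that, by Lemma~\ref{m_algorithm}, $m(\alpha, S)$ exists if and only if the set $\{\gamma : \gamma \geqslant \alpha,\ \supp(\gamma) \subseteq S\}$ is non-empty, and that the latter happens exactly when the $list$ in the algorithm is empty after all iterations (this is the statement ``the second case happens only when the set is empty''). So the task reduces to characterizing, in closed form, when the algorithm terminates with an empty $list$.

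Next I would invoke the three-way equivalence established in the lemma preceding Lemma~\ref{m_algorithm}: statement (2) there (``the $list$ is empty when the algorithm ends'') is equivalent to statement (1) there, which is precisely the conjunction of the two bulleted conditions in the present corollary --- namely $|S| \geqslant |\supp(\alpha)|$ and, with $S' \subseteq S$ the set of the $|\supp(\alpha)|$ largest elements of $S$, the relation $S' \geqslant \supp(\alpha)$. Chaining these equivalences gives: $m(\alpha, S)$ exists $\iff$ the set is non-empty $\iff$ the $list$ is empty at termination $\iff$ conditions ($\bullet$) and ($\bullet$) hold. That is exactly the claim.

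There is essentially no obstacle here, since all the content has been front-loaded into the two preceding lemmas; the only thing to be careful about is making explicit the logical bridge ``$m(\alpha,S)$ exists $\iff$ the set $\{\gamma : \gamma \geqslant \alpha,\ \supp(\gamma) \subseteq S\}$ is non-empty,'' which is immediate from Lemma~\ref{m_algorithm} (if the set is non-empty the algorithm outputs its minimum, hence $m(\alpha,S)$ exists; if it is empty there is no minimum, so $m(\alpha,S)$ does not exist). Thus the proof is a two-line chain of ``if and only if''s citing Lemma~\ref{m_algorithm} and the classification lemma that precedes it, and I would write it as such rather than re-deriving anything.
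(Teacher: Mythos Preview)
Your proposal is correct and matches the paper's own proof, which simply reads ``Follows from the previous two lemmas.'' You have spelled out the chain of equivalences that the paper leaves implicit; if anything, you could shorten it by noting that statement~(3) of the preceding lemma is exactly ``the set $\{\gamma : \gamma \geqslant \alpha,\ \supp(\gamma) \subseteq S\}$ is non-empty,'' so you can pass directly from (1) to (3) without the detour through the empty-$list$ condition~(2).
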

\begin{proof}
Follows from the previous two lemmas.
\end{proof}

Analogously, we may also look at the set 
$\{ \gamma: \gamma \leqslant \alpha, \supp(\gamma) \subseteq S\}$.
Similarly, if it is non-empty, it will contain
a unique maximum element. 
Let $M(\alpha, S)$ be this element.
To compute it, 
we only need to slightly change our algorithm above:
Let $i$ goes from $n$ to 1, instead of $1$ to $n$.
Similar to Corollary~\ref{C: m exists},
we have the following for $M(\alpha, S)$.
\begin{cor}
\label{C: M exists}
Let $\alpha$ be a weak composition
and $S \subseteq [n]$.
$M(\alpha, S)$ exists if and only if
\begin{enumerate}
\item[$\bullet$] $|S| \geqslant |\supp(\alpha)|$, and
\item[$\bullet$] Let $S' \subseteq S$ 
consists of the smallest numbers in $S$
with $|S'| = |\supp(\alpha)|$.
Then $S' \leqslant \supp(\alpha)$.
\end{enumerate}
\end{cor}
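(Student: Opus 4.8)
The final statement to prove is Corollary~\ref{C: M exists}, characterizing when $M(\alpha, S)$ exists. The plan is to parallel exactly the development already carried out for $m(\alpha, S)$, since $M(\alpha, S)$ is defined by the mirror-image algorithm (iterating $i$ from $n$ down to $1$ instead of $1$ up to $n$). So first I would make the symmetry precise: there is an order-reversing involution on weak compositions of a fixed length that reverses the index set $[n]$ (sending entry $i$ to entry $n+1-i$), and under this involution the Bruhat order flips, $\supp$ is carried to its reflection, and the ``downward'' set $\{\gamma : \gamma \leqslant \alpha,\ \supp(\gamma)\subseteq S\}$ is carried to an ``upward'' set of the same shape as in Lemma~\ref{m_algorithm}. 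The reflected algorithm is then literally the $m$-algorithm applied to the reflected data, so $M(\alpha,S)$ exists iff $m(\alpha^{\mathrm{rev}}, S^{\mathrm{rev}})$ exists.

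Then I would apply Corollary~\ref{C: m exists} to the reflected data and translate the condition back. Corollary~\ref{C: m exists} says $m(\alpha^{\mathrm{rev}}, S^{\mathrm{rev}})$ exists iff $|S^{\mathrm{rev}}| \geqslant |\supp(\alpha^{\mathrm{rev}})|$ and the set of the \emph{largest} $|\supp(\alpha)|$ elements of $S^{\mathrm{rev}}$ dominates $\supp(\alpha^{\mathrm{rev}})$ in the subset order. Since $|S^{\mathrm{rev}}| = |S|$ and $|\supp(\alpha^{\mathrm{rev}})| = |\supp(\alpha)|$, the first condition is unchanged. For the second, reflection swaps ``largest'' with ``smallest'' and reverses the subset order (by Lemma~\ref{lem:subset_order}, characterization (2): the $j^{th}$ largest element of a reflected set is $n+1$ minus the $j^{th}$ smallest of the original), so ``largest $|\supp(\alpha)|$ elements of $S^{\mathrm{rev}}$ dominate $\supp(\alpha^{\mathrm{rev}})$'' becomes ``smallest $|\supp(\alpha)|$ elements of $S$ are dominated by $\supp(\alpha)$,'' which is exactly the stated condition with $S' \leqslant \supp(\alpha)$.

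Alternatively — and this is probably cleaner to write and avoids any risk of an off-by-one in the reflection bookkeeping — I would simply re-run the three lemmas leading up to Corollary~\ref{C: m exists} verbatim for the $M$-algorithm. That is: an analogue of Lemma~\ref{lem:algorithm_lhd} (the output $\key(\sigma)$ has columns $T_c$ built from $A_c$ by a reversed greedy rule, now ``take the largest available'' scanning downward, giving $A_c \geqslant T_c$ columnwise), then the three-way equivalence lemma with the inequalities flipped, then the correctness lemma, then the corollary. Each proof is obtained from the corresponding one above by systematically swapping $\geqslant \leftrightarrow \leqslant$, ``largest'' $\leftrightarrow$ ``smallest'', ``above'' $\leftrightarrow$ ``below'', and reversing iteration directions, using characterization (2) or (3) of Lemma~\ref{lem:subset_order} in its dual form. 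The main obstacle — really the only thing requiring care — is getting the dual of Lemma~\ref{lem:subset_order} stated correctly: one needs that $S \leqslant S'$ is equivalent to $|S| = |S'|$ together with, for each $j$, the $j^{th}$ \emph{smallest} element of $S$ being at most the $j^{th}$ smallest element of $S'$ — this is not immediately the same as characterization (2), and I would either prove it as a short standalone observation or derive it from (2) via the reflection, being careful that reflecting $S$ and $S'$ preserves the relation $S \leqslant S'$ only after also noting the order-reversal. Given that, the corollary follows formally, so I would present the reflection argument as the proof and relegate the verification of the dual subset-order lemma to a one-line remark.
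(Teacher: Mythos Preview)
Your proposal is correct and matches the paper's approach: the paper's entire proof is the single sentence ``The proof is similar to the proof of Corollary~\ref{C: m exists},'' which is exactly your second route (re-run the lemmas with all inequalities and directions dualized). Your reflection argument is a valid alternative packaging of the same idea. One small remark: your concern about the ``dual'' of Lemma~\ref{lem:subset_order} is unnecessary, since for sets of the same size $m$ the $j^{\text{th}}$ smallest element is the $(m+1-j)^{\text{th}}$ largest, so ``$j^{\text{th}}$ smallest of $S$ is at most the $j^{\text{th}}$ smallest of $S'$ for all $j$'' is literally a reindexing of characterization~(2), not a new statement requiring separate proof.
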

\begin{proof}
The proof is similar to the proof of Corollary~\ref{C: m exists}.
\end{proof}

We end this subsection by a property
that connects $m(\gamma, S)$ and $M(\alpha, S)$.
Let $\gamma$ be a weak composition.
We use $\overline{\gamma}$ to denote
the weak composition obtained
by decreasing each positive entry of $\gamma$
by 1. 
\begin{lemma}
\label{L: m and M}
Let $\alpha, \gamma$ be two weak compositions.
Take $S \subseteq [n]$ 
with $|S| = |\supp(\alpha)|$.
Then the following are equivalent:
\begin{enumerate}
\item[$\bullet$] $m(\gamma, S)$ exists
and $\alpha \geqslant \mathbbm{1}_S + m(\gamma, S)$.
\item[$\bullet$] $M(\alpha, S)$ exists
and $\overline{M(\alpha, S)} \geqslant \gamma$.
\end{enumerate}
\end{lemma}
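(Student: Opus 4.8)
The plan is to prove the equivalence by translating both conditions into statements about left keys (i.e.\ column sets under the $\lhd$ operator), using Lemma~\ref{lem:algorithm_lhd} as the bridge, and then to show the two resulting column-wise inequalities are literally the same. First I would set $m = m(\gamma,S)$ (assuming it exists) and $\delta = \mathbbm{1}_S + m$. The key point is that $\supp(\delta) = S$ and $\key(\delta)$ has one more box in each column than $\key(m)$; concretely, if $T_c$ is the set of numbers in column $c$ of $\key(m)$, then column $c$ of $\key(\delta)$ is obtained from $S$ restricted appropriately — more precisely, column $1$ of $\key(\delta)$ is $S$ itself, and column $c+1$ of $\key(\delta)$ equals $T_c$ for $c \geqslant 1$, since a positive entry of $\delta$ is $\geqslant c+1$ exactly when the corresponding entry of $m$ is $\geqslant c$. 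By Lemma~\ref{lem:algorithm_lhd}, $T_c = S \lhd A_c$ where $A_c$ is column $c$ of $\key(\gamma)$. So the condition $\alpha \geqslant \delta$ becomes: $\key(\alpha)$ and $\key(\delta)$ have the same shape (equivalently $|S| = |\supp(\alpha)|$, which is our hypothesis, together with $|\supp(\gamma)| = |\supp(\alpha)|$, forced by $\supp(\gamma)\subseteq S$ and equal sizes of the columns), and column $c$ of $\key(\alpha)$ dominates column $c$ of $\key(\delta)$ for every $c$.

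Next I would do the analogous unpacking on the other side. Set $M = M(\alpha,S)$ (assuming it exists) and write $B_c$ for column $c$ of $\key(M)$ and $A_c'$ for column $c$ of $\key(\alpha)$; the analogue of Lemma~\ref{lem:algorithm_lhd} for the $M$-algorithm (which runs $i$ from $n$ down to $1$ and is noted just before Corollary~\ref{C: M exists}) should give a dual statement, something like $B_c = S \rhd A_c'$ for the mirror-image operator, i.e.\ $\key(M)$ is obtained from $\key(\alpha)$ by a ``right key toward $S$'' procedure. The composition $\overline{M}$ strips one box off each column: column $c$ of $\key(\overline{M})$ is column $c+1$ of $\key(M)$, namely $B_{c+1}$. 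So $\overline{M} \geqslant \gamma$ unwinds to: same shape, and $B_{c+1} \geqslant A_c = $ column $c$ of $\key(\gamma)$ for all $c$. The crux is then an identity between the $m$-side data and the $M$-side data: one must show that $T_c$ (the column of $\key(m(\gamma,S))$) and $B_{c+1}$ (the stripped column of $\key(M(\alpha,S))$) are controlled by the same inequalities, so that ``$A_c^{(\alpha)} \geqslant T_c$ for all $c$'' holds iff ``$B_{c+1} \geqslant A_c^{(\gamma)}$ for all $c$.''

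The honest way to get that identity is via an adjunction/Galois-connection statement: for fixed $S$ with $|S| = |\supp(\alpha)|$, the maps $\gamma \mapsto \mathbbm{1}_S + m(\gamma,S)$ and $\beta \mapsto \overline{M(\beta,S)}$ (defined on the relevant posets of compositions supported in $S$, resp.\ of size-compatible compositions) are order-preserving and form an adjoint pair: $\mathbbm{1}_S + m(\gamma,S) \leqslant \beta$ iff $\gamma \leqslant \overline{M(\beta,S)}$, with the existence of one side guaranteeing the existence of the other. Granting the adjunction, taking $\beta = \alpha$ gives exactly the claimed equivalence (the existence clauses match because $m(\gamma,S)$ exists iff $\gamma$ lies in the domain where the left adjoint is defined, and likewise $M(\alpha,S)$ exists iff $\alpha$ lies in the domain of the right adjoint, and the inequality on either side certifies membership in the other domain). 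To prove the adjunction itself I would again pass to left keys: the statement ``$\mathbbm{1}_S + m(\gamma,S) \leqslant \alpha$'' says column-wise $S \lhd A_c^{(\gamma)} \leqslant A_c^{(\alpha)}$ (with $S$ itself as the zeroth column bound), while ``$\gamma \leqslant \overline{M(\alpha,S)}$'' says column-wise $A_c^{(\gamma)} \leqslant (S \rhd A_c^{(\alpha)})$-shifted-by-one; these two are equivalent because $\lhd$ against $S$ and $\rhd$ against $S$ are themselves an adjoint pair on sets (a standard fact about the column-insertion/Schützenberger-type operators, provable directly from the greedy description in Definition~\ref{def:left_key}), combined with the ``shift by one column'' bookkeeping relating $\mathbbm{1}_S + (-)$ to $(-)$ and $\overline{(-)}$ to $(-)$.

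The main obstacle I anticipate is making the column-indexing bookkeeping precise and checking the shape (i.e.\ size) conditions line up exactly — in particular verifying that whenever one side's object exists, the required equality of shapes $|S| = |\supp(\alpha)| = |\supp(\gamma)|$ is genuinely forced, and that the ``zeroth column'' $S$ in the $\lhd$-chain for $\mathbbm{1}_S + m(\gamma,S)$ is handled correctly rather than producing an off-by-one error. The core combinatorial input — that $\lhd$ and $\rhd$ against a fixed set $S$ are an adjoint pair, so that $S \lhd X \leqslant Y \iff X \leqslant S \rhd Y$ in the subset order of Lemma~\ref{lem:subset_order} — is the real engine, and while it is plausibly already implicit in Lemma~\ref{lem:algorithm_lhd} and the correctness of the $m$- and $M$-algorithms (Lemma~\ref{m_algorithm} and Corollary~\ref{C: M exists}), I would isolate it as a separate sublemma and prove it from the greedy definition to keep the argument self-contained.
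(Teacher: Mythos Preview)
Your proposal would likely work, but it is dramatically more complicated than necessary, and the paper's proof is different and much shorter. You correctly identify the statement as an adjunction, but you then try to prove that adjunction by descending to a column-level adjunction between $\lhd$ and a to-be-defined $\rhd$ operator, which forces you to introduce new machinery, state and prove a dual of Lemma~\ref{lem:algorithm_lhd} for the $M$-algorithm, and chase index shifts. None of this is needed.

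The paper's argument uses only the \emph{universal properties} of $m$ and $M$ as the minimum and maximum of their defining sets, together with the easy fact that on compositions with support exactly $S$ the operations $\sigma \mapsto \mathbbm{1}_S + \sigma$ and $\sigma \mapsto \overline{\sigma}$ are order-preserving and mutually inverse. Concretely: if $\alpha \geqslant \mathbbm{1}_S + m(\gamma,S)$, then since the right side has support $S$ it witnesses that $M(\alpha,S)$ exists and $M(\alpha,S) \geqslant \mathbbm{1}_S + m(\gamma,S)$; the hypothesis $|S| = |\supp(\alpha)|$ forces $\supp(M(\alpha,S)) = S$, so applying $\overline{(\cdot)}$ gives $\overline{M(\alpha,S)} \geqslant m(\gamma,S) \geqslant \gamma$. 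The converse runs the same way: $\overline{M(\alpha,S)}$ has support contained in $S$ and dominates $\gamma$, so $m(\gamma,S)$ exists and is $\leqslant \overline{M(\alpha,S)}$; add $\mathbbm{1}_S$ to both sides (again using $\supp(M(\alpha,S)) = S$) and finish with $\alpha \geqslant M(\alpha,S)$. That is the entire proof.

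In short: the adjunction you want to prove is not something you need to reduce to $\lhd$ and $\rhd$; it is immediate from the min/max definitions once you observe that the shift by $\mathbbm{1}_S$ is the inverse of $\overline{(\cdot)}$ on compositions supported exactly on $S$. Your column-wise unpacking, the $\rhd$ sublemma, and the index bookkeeping you worry about can all be discarded.
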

\begin{proof}
Assume the first statement is true. 
Notice $\supp(\mathbbm{1}_S + m(\gamma, S)) = S$,
so $M(\alpha, S)$ exists and 
$$M(\alpha, S) \geqslant \mathbbm{1}_S + m(\gamma, S).$$
Decrease each positive entry by 1 on both sides and get
$$\overline{M(\alpha, S)} \geqslant m(\gamma, S).$$
Then we get the second statement since 
$m(\gamma, S) \geqslant \gamma$.

Now assume the second statement is true. 
Notice $\supp(\overline{M(\alpha, S)}) \subseteq S$,
so $m(\gamma, S)$ exists and 
$$\overline{M(\alpha, S)} \geqslant m(\gamma, S).$$
By $|S| = |\supp(\alpha)|$, 
$\supp(M(\alpha, S)) = S$,
so $$M(\alpha, S) \geqslant \mathbbm{1}_S + m(\gamma, S).$$
Then we get the first statement since 
$\alpha \geqslant M(\alpha, S)$.
\end{proof}

\subsection{Left swap order}\label{subsection.left.swap}
Assaf and Searles~\cite{A} 
also defined a partial order 
on weak compositions 
called the \definition{left swap order}.
In this subsection, 
we introduce this order and show
that it is equivalent to the Bruhat order.

\begin{defn}\cite[Definition 2.3.4]{A,A19}\label{left_swap_inclusion}
A \definition{left swap} on a weak composition $\alpha$ exchanges two parts $\alpha_i<\alpha_j$ with $i<j$. The left swap order on weak compositions is the transitive closure of the relation $\gamma\preceq \alpha$ whenever $\gamma$ is a left swap of $\alpha$.
\end{defn}

When $\gamma$ is obtain from $\alpha$ by exchanging the $i^{th}$ and $j^{th}$ parts of $\alpha$, we write $\gamma = (i\: j)\alpha$.

\begin{prop}\cite[Prop. 2.3.9]{A19}\label{left_swap_inclusion}
Given weak compositions $\alpha,\gamma$, we have $\gamma \preceq \alpha$ if and only if the key diagram pair of $\gamma$ is in $\KD(\alpha)$.
\end{prop}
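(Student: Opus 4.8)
```latex
\bigskip
\noindent\textbf{Proof proposal for Proposition~\ref{left_swap_inclusion}.}

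The plan is to establish both directions by going through the Bruhat order of subsection~\ref{subsection.bruhat}. The key translation to make first is: $\gamma \preceq \alpha$ in the left swap order iff $\gamma \leqslant \alpha$ in the Bruhat order, and then separately $\gamma \leqslant \alpha$ in the Bruhat order iff $D_\gamma \in \KD(\alpha)$. The second of these equivalences is really the content of Remark~\ref{natual_map} combined with the left key condition defining $\RSSYT(\alpha)$: a key diagram pair $D_\gamma$, regarded as the $\RSSYT$ that is $\key(\gamma)$ itself, lies in $\KD(\alpha) = \RSSYT(\alpha)$ precisely when $\KL(\key(\gamma)) \leqslant \key(\alpha)$; but the left key of a key tableau is itself, so this says $\key(\gamma) \leqslant \key(\alpha)$ entrywise with the same shape, which is exactly $\gamma \leqslant \alpha$. (One must be slightly careful that ``same shape'' is forced: if $D_\gamma \in \KD(\alpha)$ then Kohnert moves preserve the number of cells, and the column-strictness of the labeling forces $\gamma^+ = \alpha^+$, so the shapes agree.)

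For the first equivalence, I would argue each containment separately. For $\gamma \preceq \alpha \implies \gamma \leqslant \alpha$: since the Bruhat order is a partial order, it suffices to check that a single left swap $\gamma = (i\ j)\alpha$ with $i < j$ and $\alpha_i < \alpha_j$ satisfies $\gamma \leqslant \alpha$; this is a direct computation on the first columns of $\key(\gamma)$ versus $\key(\alpha)$ using the characterization in Lemma~\ref{lem:subset_order}, as swapping moves a smaller part from position $i$ to position $j > i$ and the larger part back, which weakly increases each ``$j$th largest supported index.'' For the converse $\gamma \leqslant \alpha \implies \gamma \preceq \alpha$: induct on, say, $\sum_i i\cdot(\alpha_i - \gamma_i)$ or on the number of positions where $\key(\gamma)$ and $\key(\alpha)$ differ; if $\gamma \neq \alpha$ one finds a pair of positions admitting a left swap on $\gamma$ that strictly moves it toward $\alpha$ while staying $\leqslant \alpha$. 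This is precisely the content of the left swap order being the ``reverse'' of a covering structure, and it is plausible the authors instead cite this directly from~\cite{A19}; if so, the whole first equivalence can be quoted and only the second equivalence needs proof here.

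The main obstacle I anticipate is the inductive step of $\gamma \leqslant \alpha \implies \gamma \preceq \alpha$: identifying, from the entrywise inequality $\key(\gamma) \leqslant \key(\alpha)$, an explicit pair $i < j$ with $\gamma_i < \gamma_j$ whose swap keeps us below $\alpha$. The natural candidate is to take $j$ the largest index where $\gamma_j < \alpha_j$ (equivalently, using support sets via Lemma~\ref{lem:subset_order}, the largest index whose multiplicity ``deficit'' is positive) and $i$ an appropriate index with $\gamma_i$ too large or with a part that wants to move right; verifying that the resulting swap $(i\ j)\gamma$ still satisfies $\leqslant \alpha$ is the delicate point, and it is cleanest to phrase it entirely in terms of the column sets of the key tableaux and Lemma~\ref{lem:subset_order}(3), comparing tail-cardinalities $|[s,n]\cap\supp(\cdot)|$ before and after. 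Everything else — the same-shape bookkeeping, the fact that $\KL$ fixes key tableaux, and transitivity — is routine.
```
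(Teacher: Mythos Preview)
The paper does not prove this proposition at all: it is quoted verbatim from \cite[Prop.~2.3.9]{A19} and used as a black box. So there is no ``paper's own proof'' to match, and your proposal is an attempt to supply an argument the authors deliberately outsourced.

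That said, your route has a structural circularity you should flag explicitly. Your second equivalence ($\gamma \leqslant \alpha \iff D_\gamma \in \KD(\alpha)$) rests on the identification $\KD(\alpha) = \RSSYT(\alpha)$ from Remark~\ref{natual_map}. In this paper that identification is only \emph{proved} in Section~\ref{section.describe KKD RSVT}, and the proof there (via Theorem~\ref{T: KKD image} and Lemma~\ref{L: Reverse K-Kohnert}) invokes Corollary~\ref{C: KD RSVT order}, which in turn uses the very Proposition~\ref{left_swap_inclusion} you are trying to prove. So within the paper's internal logic your argument is circular; it only stands if you take $\KD(\alpha) = \RSSYT(\alpha)$ as an external fact from \cite{A}, in which case you might as well cite \cite{A19} for Proposition~\ref{left_swap_inclusion} directly, as the authors do.

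Your first equivalence ($\gamma \preceq \alpha \iff \gamma \leqslant \alpha$) is exactly Proposition~\ref{left_swap_equiv}, which the paper states and proves \emph{after} Proposition~\ref{left_swap_inclusion}. Your sketch of the forward direction matches the paper's. For the reverse direction, the paper does not use the inductive measure you propose (the ``deficit'' swap); instead it inducts on $\max(\alpha)$, reducing via $M(\alpha,\supp(\gamma))$ and Lemma~\ref{L: M and left swap} to the equal-support case, then peels off a layer with Lemma~\ref{L: bar order}. Your proposed swap-finding induction is plausible but you have not actually carried it out, and the ``delicate point'' you identify is real --- the paper's $M(\alpha,S)$ machinery exists precisely to sidestep it.
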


To show the equivalence between the left swap order
and the Bruhat order, 
we need a few lemmas.
We start with the following, 
which summarizes how $M(\alpha, S)$
is changed when we changes $S$ in a nice way.

\begin{lemma}
\label{L: Lift M}
Let $\alpha$ be a weak composition
and $S \subseteq [n]$ with $S \leqslant \supp(\alpha)$.
Take $g \notin S$ such that 
$|[g, n] \cap \supp(\alpha)| > |[g, n] \cap S|$.
Then there exists $s \in S$ such that $s < g$ and
$$M(\alpha, S')
= (s \:\: g) M(\alpha, S),$$
where $S' = (S \sqcup \{g\}) - \{s\}$.
In particular, 
$M(\alpha, S)$is a left swap of $M(\alpha, S')$.
\end{lemma}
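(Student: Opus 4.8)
The plan is to read the pivot $s$ directly off the right-to-left algorithm computing $M(\alpha,S)$, and then to run that algorithm and its analogue for $S'$ in lockstep. Since $S\leqslant\supp(\alpha)$ we have $|S|=|\supp(\alpha)|$, so by Corollary~\ref{C: M exists} $M(\alpha,S)$ exists; let $\mathcal A$ denote the successful run of the algorithm, which ends with an empty $list$. The one numerical fact I would extract from the hypothesis on $g$ is that just after $\mathcal A$ processes positions $n,n-1,\dots,g$ (including adding $\alpha_g$ when $\alpha_g>0$), the $list$ is nonempty: the adding processes in $[g,n]$ contribute $|[g,n]\cap\supp(\alpha)|$ numbers, while the removing processes, which occur only at $S$-positions and never at $g$, delete at most $|[g,n]\cap S|<|[g,n]\cap\supp(\alpha)|$ of them. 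Hence there is a well-defined number $v:=\max(list)$ at that instant; informally, $v$ is the number the removing process at position $g$ \emph{would} delete if $g$ lay in the index set. Because $g\notin S$, $\mathcal A$ carries this occurrence of $v$ forward, and because $\mathcal A$ ends with an empty $list$, it is deleted at some later step; since deletions occur only at $S$-positions and only at positions processed after $g$, that step is a position $s\in S$ with $s<g$. I take this $s$ as the pivot and set $S'=(S\sqcup\{g\})-\{s\}$. (If several occurrences attain the maximal value at some step, fix once and for all a tie-breaking order on occurrences; only the multiset of deleted values affects the output, so this costs nothing and makes ``the occurrence $v$'' and ``the step $s$'' unambiguous.)

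Next I would run $\mathcal A$ and the a priori possibly failing run $\mathcal A'$ for $M(\alpha,S')$ simultaneously, comparing their $lists$. On positions $n$ down to $g+1$ the index sets $S$ and $S'$ agree (neither $s$ nor $g$ lies in $[g+1,n]$), so the two runs are identical. At position $g$ both add $\alpha_g$ (when positive); then $\mathcal A$ does nothing while $\mathcal A'$ deletes $v$ and records $v$ as its $g$-th part. So immediately after step $g$, the $list$ of $\mathcal A'$ equals that of $\mathcal A$ with the occurrence $v$ deleted. This is the invariant I propagate downward through positions $g-1,\dots,s+1$: the adding processes are identical, and at every $S$-position $i$ in that range $\mathcal A$ does \emph{not} delete $v$ (it is deleted only at step $s$), so the maximum of $\mathcal A$'s $list$ at that step is attained by an occurrence different from $v$; deleting $v$ from $\mathcal A'$'s $list$ therefore leaves the maximum, and the tie-break's choice among the maximal occurrences, unchanged, so $\mathcal A'$ deletes the same number and records the same $i$-th part. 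At position $s$, $\mathcal A$ deletes $v$ and records $v$ as its $s$-th part, while $\mathcal A'$ does nothing since $s\notin S'$; the two $lists$ coincide again, and on all positions below $s$ the index sets agree, so the runs are identical. Hence $\mathcal A'$ also ends with an empty $list$, so $M(\alpha,S')$ exists; its $g$-th part is the value $v=M(\alpha,S)_s$, its $s$-th part is $0=M(\alpha,S)_g$ (as $g\notin S$), and every other part agrees with $M(\alpha,S)$. That is exactly $M(\alpha,S')=(s\;g)\,M(\alpha,S)$.

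For the last assertion, $s\in S=\supp(M(\alpha,S))$ gives $M(\alpha,S)_s>0=M(\alpha,S)_g$, hence $M(\alpha,S')_g>M(\alpha,S')_s$ with $s<g$; exchanging these two parts of $M(\alpha,S')$ is thus a legal left swap and returns $M(\alpha,S)$, so $M(\alpha,S)$ is a left swap of $M(\alpha,S')$.

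I expect the real work to be in the invariant step — certifying that the spare occurrence $v$ carried by $\mathcal A'$ never affects an intermediate deletion. Conceptually this is clear: by the very definition of $s$, run $\mathcal A$ deletes $v$ only at step $s$, so at each $S$-position strictly between $s$ and $g$ the maximum of $\mathcal A$'s $list$ is attained by an occurrence other than $v$, and $\mathcal A'$, missing only $v$, makes the identical choice; the only fiddly bookkeeping is when equal values collide, which is precisely why the tie-breaking order on occurrences is pinned down at the outset. With that fixed, the remaining claims — termination of $\mathcal A'$, the matching of all non-pivot parts, and the left-swap conclusion — follow mechanically from the lockstep comparison. A more algebraic alternative would be to develop the $M$-analogue of Lemma~\ref{lem:algorithm_lhd}, expressing the columns of $\key(M(\alpha,S))$ through a right-handed version of the $\lhd$ operator applied to the columns of $\key(\alpha)$, and then to compare $\key(M(\alpha,S))$ and $\key(M(\alpha,S'))$ column by column using Lemma~\ref{lem:subset_order}; but the direct run of the algorithm seems cleaner.
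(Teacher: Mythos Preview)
Your proof is correct and follows essentially the same approach as the paper: run the right-to-left algorithm for $M(\alpha,S)$, observe that the $list$ is nonempty just after processing position $g$, let $v$ be its maximum, take $s$ to be the step at which $v$ is eventually removed, and verify that the run for $S'=(S\sqcup\{g\})-\{s\}$ agrees everywhere except that it records $v$ at position $g$ instead of at position $s$. The paper's proof is considerably terser---it simply asserts that the $S'$-run ``behaves the same as on $M(\alpha,S)$, except it assigns $x$ to $\sigma_g$ and keeps $\sigma_s=0$''---whereas you carry out the lockstep invariant explicitly and handle the tie-breaking bookkeeping that the paper leaves implicit; one cosmetic difference is that the paper pins down $s$ as the \emph{largest} index at which the value $x$ is removed, while your occurrence-tracking could in principle select a different (smaller) $s$ when values repeat, but both choices work for the existence claim.
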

\begin{proof}
Run the algorithm that computes
$M(\alpha, S)$.
After initialization, 
the algorithm iterates from $i = n$
to $i = 1$.
Right after the iteration with $i = g$, 
the algorithm has put 
$|[g, n] \cap \supp(\alpha)|$
numbers to $list$
and has removed at most
$|[g, n] \cap S|$ numbers from $list$.
Since $|[g, n] \cap \supp(\alpha)|
> |[g, n] \cap S|$,
the list is non-empty. 
Let $x$ be the largest number 
in the current list.
Then this $x$ will be picked sometime 
in the future.
Let $s \in S \cap [1,g)$ be the largest 
such that in the iteration $i = s$, 
the algorithm assigns $x$ to $\sigma_i$.
We may run the algorithm to compute 
$M(\alpha, (S \sqcup \{g\}) - \{s\})$.
It behaves the same as on 
$M(\alpha, S)$,
except it assigns $x$ to $\sigma_g$
and keeps $\sigma_s = 0$.
Thus, $M(\alpha, (S \sqcup \{g\}) - \{s\})
= (g \; s) M(\alpha, S)$.
\end{proof}

We know from definition that 
$M(\alpha, S) \leqslant \alpha$.
The next lemma will describe their relationship
in the left swap order.

\begin{lemma}
\label{L: M and left swap}
Let $\alpha$ be a weak composition.
Let $S$ be a set such that $S \leqslant \supp(\alpha)$.
Then $M(\alpha, S) \preceq \alpha$.
\end{lemma}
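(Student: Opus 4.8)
The plan is to induct on the size of the set $\supp(\alpha) \setminus S$, using Lemma~\ref{L: Lift M} as the inductive engine. The base case is $S = \supp(\alpha)$: here $M(\alpha, S) = \alpha$ (the algorithm assigns each positive part of $\alpha$ back to its own index, since $i \in S$ exactly when $\alpha_i > 0$), so $M(\alpha, S) \preceq \alpha$ trivially. For the inductive step, suppose $S \lneq \supp(\alpha)$ in the sense that $S \leqslant \supp(\alpha)$ but $S \neq \supp(\alpha)$. Since $S \leqslant \supp(\alpha)$ forces $|S| = |\supp(\alpha)|$ by Lemma~\ref{lem:subset_order}, the sets differ in at least two elements, and in particular there is some $g \in \supp(\alpha) \setminus S$. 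I would like to choose $g$ so that the hypothesis $|[g,n] \cap \supp(\alpha)| > |[g,n] \cap S|$ of Lemma~\ref{L: Lift M} holds.

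The key sub-step is therefore to verify that such a $g$ exists. Here I would invoke characterization (3) of Lemma~\ref{lem:subset_order}: from $S \leqslant \supp(\alpha)$ we have $|[s,n] \cap S| \leqslant |[s,n] \cap \supp(\alpha)|$ for every $s \in S$. I claim that if $S \neq \supp(\alpha)$, then taking $g$ to be the \emph{largest} element of $\supp(\alpha) \setminus S$ works. Indeed, every element of $[g+1, n] \cap \supp(\alpha)$ lies in $S$ by maximality of $g$, so $[g+1,n] \cap \supp(\alpha) \subseteq [g+1,n] \cap S$, hence $|[g,n] \cap \supp(\alpha)| = |[g+1,n]\cap\supp(\alpha)| + 1 \geqslant |[g+1,n] \cap S| + 1$; and since $g \notin S$, $|[g,n] \cap S| = |[g+1,n] \cap S|$. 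Combining, $|[g,n]\cap\supp(\alpha)| \geqslant |[g,n]\cap S| + 1 > |[g,n]\cap S|$, as needed.

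With this $g$ in hand, Lemma~\ref{L: Lift M} produces $s \in S$ with $s < g$ such that, setting $S' = (S \sqcup \{g\}) - \{s\}$, we have $M(\alpha, S) = (s\:\:g)\, M(\alpha, S')$, a left swap of $M(\alpha, S')$ (note $s < g$ and the corresponding parts satisfy the left-swap inequality automatically from the structure of $M$). Now $S' \leqslant \supp(\alpha)$: one checks this directly from Corollary~\ref{C: M exists}, or observes that $M(\alpha, S')$ exists as a weak composition $\leqslant \alpha$ with support $S'$, whence $S' = \supp(M(\alpha,S')) \leqslant \supp(\alpha)$. Since $|\supp(\alpha) \setminus S'| = |\supp(\alpha) \setminus S| - 1$, the inductive hypothesis gives $M(\alpha, S') \preceq \alpha$. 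Then $M(\alpha, S) \preceq M(\alpha, S') \preceq \alpha$ by transitivity of the left swap order, completing the induction.

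The main obstacle I anticipate is purely bookkeeping: confirming that the element $s$ and index $g$ produced by Lemma~\ref{L: Lift M} genuinely form a \emph{left swap} in the sense of Definition~\ref{left_swap_inclusion}, i.e.\ that the part of $M(\alpha,S')$ at position $s$ is strictly less than the part at position $g$. Since $\sigma_s = 0 < x = \sigma_g$ in the notation of that lemma's proof, this holds, but it needs to be stated cleanly. Everything else is a routine application of the counting criterion in Lemma~\ref{lem:subset_order}.
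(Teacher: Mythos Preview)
Your approach mirrors the paper's: both pick the largest $g \in \supp(\alpha) \setminus S$, invoke Lemma~\ref{L: Lift M} to pass to a set $S'$, and induct. However, your induction parameter $|\supp(\alpha) \setminus S|$ need not decrease. Concretely, take $\alpha = (0,1,2)$ and $S = \{1,2\}$, so $\supp(\alpha) = \{2,3\}$ and $g = 3$. Running the $M(\alpha,S)$-algorithm one finds $M(\alpha,S) = (1,2,0)$ and, in the notation of Lemma~\ref{L: Lift M}, $x = 2$ is removed at iteration $i = 2$, so $s = 2$. Thus $S' = \{1,3\}$, and $\supp(\alpha) \setminus S' = \{2\}$ still has size $1$: the element $s = 2$ you removed from $S$ happens to lie in $\supp(\alpha)$, so it re-enters the difference.

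The fix is painless: induct instead on $\max(\supp(\alpha) \setminus S)$ (with $\max(\emptyset) = 0$). Since $g = \max(\supp(\alpha)\setminus S)$ is added to $S'$ and the removed $s$ satisfies $s < g$, we have $\supp(\alpha)\setminus S' \subseteq \big((\supp(\alpha)\setminus S) \setminus\{g\}\big) \cup \{s\}$, every element of which is strictly below $g$; hence the maximum strictly drops. This is precisely the parameter the paper uses, phrased there as the smallest $g$ with $[g,n]\cap S = [g,n]\cap\supp(\alpha)$.
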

\begin{proof}
Find the smallest $g \geqslant 1$
such that $[g, n] \cap S = [g, n] \cap \supp(\alpha)$.
Prove this lemma by induction on $g$.
For the base case, we assume $g = 1$.
Then $\supp(\alpha) = S$
which implies $M(\alpha, S) = \alpha$.

Next, assume $g > 1$.
Since $[g - 1, n] \cap S \neq [g - 1, n] \cap \supp(\alpha)$
and $S \leqslant \supp(\alpha)$,
we know $g-1 \in \supp(\alpha) - S$.
Thus, $|[g - 1, n] \cap S| < |[g - 1, n] \cap \supp(\alpha)|$.
By Lemma~\ref{L: Lift M}, 
there exists $s \in S$ such that 
if we let $S' = (S - \{s\}) \sqcup \{g-1\}$,
we have $M(\alpha, S) \preceq M(\alpha, S')$.
Notice that $[g - 1, n] \cap S' = [g - 1, n] \cap \supp(\alpha)$.
We may apply our inductive hypothesis and get
$M(\alpha, S) \preceq M(\alpha, S') \preceq \alpha$.
\end{proof}

Finally, we need the following intuitive lemma,
which says both partial orders are preserved 
by the operator $\alpha \mapsto \overline{\alpha}$. 
\begin{lemma}
\label{L: bar order}
Given weak compositions $\alpha$ and $\gamma$
with $\supp(\alpha) = \supp(\gamma)$.
Then we have
\begin{itemize}
\item $\gamma \leqslant \alpha$
if and only if $\overline{\gamma} \leqslant \overline{\alpha}$.
\item $\gamma \preceq \alpha$
if and only if $\overline{\gamma} \preceq \overline{\alpha}$.
\end{itemize}
\end{lemma}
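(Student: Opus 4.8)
The plan is to treat the two displayed equivalences separately; only the left‑swap one will use the hypothesis $\supp(\alpha)=\supp(\gamma)$.

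\emph{The Bruhat equivalence.} My plan is to pass to the column description of key tableaux. Column $c$ of $\key(\delta)$ is the set $\{i:\delta_i\geqslant c\}$ written in decreasing order, so the entry of $\key(\delta)$ in row $r$ of column $c$ is the $r$-th largest element of $\{i:\delta_i\geqslant c\}$. Invoking Lemma~\ref{lem:subset_order} to recognize the entrywise comparison of two equal‑length columns as the subset order on the underlying sets, one gets the reformulation: $\delta\leqslant\delta'$ holds iff $\{i:\delta_i\geqslant c\}\leqslant\{i:\delta'_i\geqslant c\}$ for every $c\geqslant 1$; here the equal‑size parts of these conditions are equivalent to ``$\key(\delta)$ and $\key(\delta')$ have the same shape'', and the $r$-th‑largest parts encode the entrywise inequality. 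Since $\{i:\overline{\delta}_i\geqslant c\}=\{i:\delta_i\geqslant c+1\}$ for every $c\geqslant 1$, the statement $\overline{\gamma}\leqslant\overline{\alpha}$ is exactly the conjunction of ``$\{i:\gamma_i\geqslant c\}\leqslant\{i:\alpha_i\geqslant c\}$'' over $c\geqslant 2$, while $\gamma\leqslant\alpha$ additionally requires the $c=1$ instance, namely $\supp(\gamma)\leqslant\supp(\alpha)$. When $\supp(\gamma)=\supp(\alpha)$ this last condition is automatic by reflexivity of the subset order, so $\gamma\leqslant\alpha\iff\overline{\gamma}\leqslant\overline{\alpha}$.

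\emph{The left‑swap equivalence.} Here the plan is to apply Proposition~\ref{left_swap_inclusion} at both ends and to interpolate with a ``delete the first column'' bijection. Write $c_\delta=\{(1,i):i\in\supp(\delta)\}$ for the first column of the key diagram pair $D_\delta$. Directly from the definitions, $D_\alpha$ is the disjoint union of $c_\alpha$ with the diagram obtained by shifting $D_{\overline{\alpha}}$ one column to the right, and the same holds for $\gamma$; moreover $c_\gamma=c_\alpha$ because $\supp(\gamma)=\supp(\alpha)$. The key claim is that ``shift right by one column, then adjoin $c_\alpha$'' is a bijection from $\KD(\overline{\alpha})$ onto the set of those $D\in\KD(\alpha)$ whose first column is $c_\alpha$, with inverse ``delete the first column, then shift left''. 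Granting this, and noting that the bijection carries $D_{\overline{\gamma}}$ to $D_\gamma$ (again because $c_\gamma=c_\alpha$), one gets
\[
\gamma\preceq\alpha\iff D_\gamma\in\KD(\alpha)\iff D_{\overline{\gamma}}\in\KD(\overline{\alpha})\iff\overline{\gamma}\preceq\overline{\alpha},
\]
using Proposition~\ref{left_swap_inclusion} for the outer equivalences and the bijection for the middle one (for the forward direction of the middle step, observe that $D_\gamma$ automatically has first column $c_\gamma=c_\alpha$). As an aside, the forward implication $\gamma\preceq\alpha\Rightarrow\overline{\gamma}\preceq\overline{\alpha}$ also admits a quick direct proof, since the bar operation commutes with coordinate transpositions and a left swap of $\delta$ either remains a left swap of $\overline{\delta}$ or acts trivially on $\overline{\delta}$.

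\emph{Proving the bijection.} The plan is: a Kohnert move relocates a cell downward within its own column, so along any Kohnert chain starting from $D_\alpha$ the cells in column $1$ can only move down; hence if $D\in\KD(\alpha)$ has first column $c_\alpha$, then every Kohnert chain from $D_\alpha$ to $D$ leaves column $1$ untouched, i.e.\ consists of moves in columns $\geqslant 2$. Deleting the frozen first column and shifting left converts each such move into a Kohnert move on the corresponding diagram over $D_{\overline{\alpha}}$, and conversely each Kohnert move on a diagram in $\KD(\overline{\alpha})$ lifts, after shifting right and re‑adjoining $c_\alpha$, to a legal Kohnert move: the cell being moved is still the rightmost in its row (the frozen column $1$ lies strictly to its left), and it descends to the correspondingly shifted landing row, since column $1$ plays no part in the jumping. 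I expect this verification---that the column shift really matches up Kohnert moves on $D_\alpha$ with those on $D_{\overline{\alpha}}$---to be the step requiring the most care; everything else is bookkeeping, and the hypothesis $\supp(\gamma)=\supp(\alpha)$ is used only to guarantee $c_\gamma=c_\alpha$, so that the bijection sends $D_{\overline{\gamma}}$ to $D_\gamma$ itself rather than to a diagram with a different first column.
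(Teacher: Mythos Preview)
Your proof is correct. One minor slip: your opening sentence claims only the left-swap equivalence uses the support hypothesis, but you (correctly) invoke it in the Bruhat part as well, to dispose of the $c=1$ column condition.

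The paper dispatches this lemma in one line (``Immediate from definitions''). For the Bruhat part your column-shift argument is exactly that immediate argument, written out: $\key(\overline{\delta})$ is $\key(\delta)$ minus its first column, and the first columns of $\key(\alpha)$ and $\key(\gamma)$ agree under the support hypothesis. For the left-swap part your route---through Proposition~\ref{left_swap_inclusion} and a ``freeze the first column'' bijection on $\KD(\alpha)$---is a genuinely different and more explicit argument than a direct manipulation of left-swap chains. The forward direction of the left-swap equivalence really is immediate (bar of a left swap is a left swap or the identity), but the backward direction requires some care if argued directly, since a chain of left swaps from $\overline{\alpha}$ to $\overline{\gamma}$ need not stay supported in $\supp(\alpha)$ at intermediate stages; your diagram approach sidesteps this issue cleanly. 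Since Proposition~\ref{left_swap_inclusion} is quoted from external work and not proved using the present lemma, there is no circularity.
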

\begin{proof}
Immediate from definitions.
\end{proof}

Now we are ready to prove the equivalence of these two partial orders.

\begin{prop} \label{left_swap_equiv}
Given weak compositions $\alpha$ and $\gamma$, we have $\gamma \preceq \alpha$ if and only if $\gamma \leqslant \alpha$.
\end{prop}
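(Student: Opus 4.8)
The plan is to prove the two implications separately; the routine direction is $\gamma \preceq \alpha \Rightarrow \gamma \leqslant \alpha$, and essentially all the work goes into $\gamma \leqslant \alpha \Rightarrow \gamma \preceq \alpha$, which is what the operators $M(\alpha,S)$ and Lemmas~\ref{L: M and left swap} and~\ref{L: bar order} were built for.

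For $\gamma \preceq \alpha \Rightarrow \gamma \leqslant \alpha$: since the Bruhat order is transitive, it suffices to treat a single left swap, so suppose $\gamma = (i\:j)\alpha$ with $i < j$ and $\alpha_i < \alpha_j$. Swapping two parts leaves the multiset of parts unchanged, so $\key(\gamma)$ and $\key(\alpha)$ have the same shape. Now compare them column by column: for $c \leqslant \alpha_i$ or $c > \alpha_j$ the $c^{th}$ columns coincide, while for $\alpha_i < c \leqslant \alpha_j$ the $c^{th}$ column of $\key(\gamma)$ is obtained from that of $\key(\alpha)$ by deleting the index $j$ and inserting the smaller index $i$ (which was not present, since $\alpha_i < c$). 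By Lemma~\ref{lem:subset_order} this replacement makes the column of $\key(\gamma)$ entrywise $\leqslant$ the column of $\key(\alpha)$. Hence $\key(\gamma) \leqslant \key(\alpha)$ entrywise and $\gamma \leqslant \alpha$.

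For $\gamma \leqslant \alpha \Rightarrow \gamma \preceq \alpha$, I would induct on the largest part $m$ of $\alpha$ (equal to that of $\gamma$, since $\alpha^+ = \gamma^+$). If $m = 0$ both compositions are zero and there is nothing to prove. For the inductive step, set $S \df \supp(\gamma)$; same shape forces $|S| = |\supp(\alpha)|$, and comparing the first columns of the key tableaux gives $S \leqslant \supp(\alpha)$. Then $\delta \df M(\alpha, S)$ exists (its defining set contains $\gamma$), Lemma~\ref{L: M and left swap} gives $\delta \preceq \alpha$, Corollary~\ref{C: M exists} together with $|S| = |\supp(\alpha)|$ gives $\supp(\delta) = S = \supp(\gamma)$, and since $\gamma$ lies in the set of which $\delta$ is the maximum we get $\gamma \leqslant \delta$. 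It thus suffices to show $\gamma \preceq \delta$. Because $\supp(\gamma) = \supp(\delta)$, Lemma~\ref{L: bar order} reduces $\gamma \preceq \delta$ to $\overline{\gamma} \preceq \overline{\delta}$ and, applied in the other direction, turns $\gamma \leqslant \delta$ into $\overline{\gamma} \leqslant \overline{\delta}$. Since $\overline{\delta}$ has largest part $m-1$, the inductive hypothesis gives $\overline{\gamma} \preceq \overline{\delta}$, hence $\gamma \preceq \delta \preceq \alpha$.

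The main obstacle is this second implication, and the point is that one cannot induct on $\gamma \leqslant \alpha$ directly: passing to $\overline{\gamma}, \overline{\alpha}$ needs equal supports, which Lemma~\ref{L: bar order} demands but which $\gamma$ and $\alpha$ need not satisfy. Interposing $\delta = M(\alpha, \supp(\gamma))$ repairs the supports while remaining $\preceq \alpha$, and applying the bar operation then strictly decreases the largest part, letting the induction close. Everything else is bookkeeping against lemmas already in hand; the only spot requiring a moment's care is the identity $\supp(M(\alpha,S)) = S$, which follows from the $M$-algorithm because when $|S| = |\supp(\alpha)|$ the algorithm is forced to assign a positive value at every index of $S$.
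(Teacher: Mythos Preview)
Your proof is correct and follows essentially the same approach as the paper's: the forward direction is handled identically (reduce to a single left swap and compare key tableaux column by column), and the reverse direction is the same induction on $\max(\alpha)$ using $M(\alpha,\supp(\gamma))$ together with Lemmas~\ref{L: M and left swap} and~\ref{L: bar order}. The only cosmetic difference is that the paper splits into the two cases $\supp(\gamma)=\supp(\alpha)$ and $\supp(\gamma)\neq\supp(\alpha)$, whereas you absorb the first case into the second (when the supports agree, your $\delta=M(\alpha,S)$ equals $\alpha$); your justification of $\supp(\delta)=S$ is also fine, and in fact follows immediately from $|\supp(\delta)|=|\supp(\alpha)|=|S|$ and $\supp(\delta)\subseteq S$ without appealing to the algorithm.
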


\begin{proof}
First we show if $\gamma \preceq \alpha$, then $\gamma \leqslant \alpha$. 
It suffices to show when $\gamma$ is a left swap of $\alpha$. 
Say $\gamma = (i \: j) \alpha$ where $i < j$ and $\alpha_i < \alpha_j$. 
Let $T_c$ (resp. $T'_c$) consists of 
numbers in column $c$ of $\key(\alpha)$ 
(resp. $\key(\gamma)$).
When $c \leqslant \alpha_i$ or $c>\alpha_j$, we have $T_c = T'_c$. 
When $\alpha_i< c \leqslant \alpha_j$, $T'_c$ is obtained from $T_c$ by replacing $j$ with $i$. Therefore $\key(\gamma) \leqslant \key(\alpha)$ and $\gamma \leqslant \alpha$.

Next we assume $\gamma \leqslant \alpha$ and 
show $\gamma \preceq \alpha$. 
We prove by induction on $\max(\alpha)$.
If $\max(\alpha) = 0$, 
then $\alpha, \gamma$ only contain 0s. 
Our claim is immediate. 
Now assume $\max(\alpha) > 0$.
We consider two cases.
\begin{itemize}
\item If $\supp(\gamma) = \supp(\alpha)$,
then $\overline{\gamma} \leqslant \overline{\alpha}$
by Lemma~\ref{L: bar order}.
By our inductive hypothesis, 
$\overline{\gamma} \preceq \overline{\alpha}$.
By Lemma~\ref{L: bar order} again,
$\gamma \preceq \alpha$.
\item Assume $\supp(\gamma) \neq \supp(\alpha)$.
Let $S = \supp(\gamma)$.
First, notice that $\gamma \leqslant M(\alpha, S)$
and these two weak compositions have the same support.
By the previous case, 
$\gamma \preceq M(\alpha, S)$.
It remains to check $M(\alpha, S) \preceq \alpha$,
which follows from
$S \leqslant \supp(\alpha)$ and
Lemma~\ref{L: M and left swap}.

\end{itemize}
\end{proof}

Consequently, we know several statements
are equivalent to $\gamma \leqslant \alpha$.

\begin{cor}
\label{C: KD RSVT order}
Given weak compositions $\alpha,\gamma$, the following are equivalent:
\begin{enumerate}
    \item $\gamma \leqslant \alpha$;
    \item $\KD(\gamma)\subseteq \KD(\alpha)$;
    \item $\KKD(\gamma)\subseteq \KKD(\alpha)$;
    \item $\RSSYT(\gamma)\subseteq \RSSYT(\alpha)$;
    \item $\RSVT(\gamma)\subseteq \RSVT(\alpha)$.
\end{enumerate}
\end{cor}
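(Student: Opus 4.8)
\textbf{Proof proposal for Corollary~\ref{C: KD RSVT order}.}
The plan is to prove a cycle of implications anchored at statement (1), using Proposition~\ref{left_swap_equiv} to translate between the Bruhat order and the left swap order, and using the recursive/explicit descriptions of the four sets. The backbone is: (1)$\Rightarrow$(2)$\Rightarrow$(3) and (1)$\Rightarrow$(4)$\Rightarrow$(5), together with the reverse implications (2)$\Rightarrow$(1), (3)$\Rightarrow$(1), (4)$\Rightarrow$(1), (5)$\Rightarrow$(1). In fact it is cleanest to show (1)$\Leftrightarrow$(2), then (2)$\Rightarrow$(3), (3)$\Rightarrow$(5), (5)$\Rightarrow$(4) or directly handle the inclusions among the diagram sets, and finally close the loop with an easy containment implication back to (1).

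First I would dispatch (1)$\Leftrightarrow$(2). The key diagram pair of $\gamma$ is $D_\gamma$, which lies in $\KD(\gamma)$ always, so if $\KD(\gamma)\subseteq\KD(\alpha)$ then $D_\gamma\in\KD(\alpha)$, and by Proposition~\ref{left_swap_inclusion} this gives $\gamma\preceq\alpha$, hence $\gamma\leqslant\alpha$ by Proposition~\ref{left_swap_equiv}. Conversely, suppose $\gamma\leqslant\alpha$; then $\gamma\preceq\alpha$, so $D_\gamma\in\KD(\alpha)$ again by Proposition~\ref{left_swap_inclusion}; since $\KD(\gamma)$ is by definition the closure of $\{D_\gamma\}$ under Kohnert moves and $\KD(\alpha)$ is closed under those same moves, we conclude $\KD(\gamma)\subseteq\KD(\alpha)$. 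Exactly the same closure argument, applied to $K$-Kohnert moves, gives that $D_\gamma\in\KKD(\alpha)$ implies $\KKD(\gamma)\subseteq\KKD(\alpha)$; and since $D_\gamma\in\KD(\alpha)\subseteq\KKD(\alpha)$, the implication (1)$\Rightarrow$(3) follows, while (3)$\Rightarrow$(2) holds because if $(K,G)\in\KKD(\gamma)$ then $(K,\emptyset)\in\KD(\gamma)$ (Remark after the definition of $\KKD$), so from $\KKD(\gamma)\subseteq\KKD(\alpha)$ and the same remark for $\alpha$ one recovers $\KD(\gamma)\subseteq\KD(\alpha)$; alternatively just note $D_\gamma\in\KKD(\gamma)\subseteq\KKD(\alpha)$ and $D_\gamma$ has no ghosts, so $D_\gamma\in\KD(\alpha)$, giving (3)$\Rightarrow$(1) directly.

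For the tableau side, I would use that $\RSSYT(\beta)$ sits inside $\RSVT(\beta)$ as the diagram pairs with empty ghost/extra component, together with the definitions $\RSSYT(\beta)=\{T\in\RSSYT_{\beta^+}:\KL(T)\leqslant\key(\beta)\}$ and similarly for $\RSVT$. Note first that $\gamma\leqslant\alpha$ forces $\gamma^+=\alpha^+$ (same shape of $\key$), so the ambient sets $\RSSYT_{\gamma^+}$ and $\RSSYT_{\alpha^+}$ coincide, and likewise for $\RSVT$. Then $\RSSYT(\gamma)\subseteq\RSSYT(\alpha)$ is immediate from $\key(\gamma)\leqslant\key(\alpha)$: any $T$ with $\KL(T)\leqslant\key(\gamma)$ satisfies $\KL(T)\leqslant\key(\alpha)$ by transitivity of entrywise $\leqslant$. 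The identical one-line argument gives $\RSVT(\gamma)\subseteq\RSVT(\alpha)$, so (1)$\Rightarrow$(4) and (1)$\Rightarrow$(5). For the converses: if $\RSSYT(\gamma)\subseteq\RSSYT(\alpha)$ then in particular $\key(\gamma)\in\RSSYT(\gamma)$ (since $\KL(\key(\gamma))=\key(\gamma)\leqslant\key(\gamma)$), so $\key(\gamma)\in\RSSYT(\alpha)$, forcing $\key(\gamma)$ to have shape $\alpha^+$ and $\KL(\key(\gamma))=\key(\gamma)\leqslant\key(\alpha)$, i.e. $\gamma\leqslant\alpha$; and (5)$\Rightarrow$(4) is the restriction-to-empty-ghost observation (Remark: if $(L,E)\in\RSVT(\alpha)$ then $(L,\emptyset)\in\RSSYT(\alpha)$), applied to $\gamma$ and $\alpha$, so (5)$\Rightarrow$(4)$\Rightarrow$(1). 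This closes all equivalences.

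The only real content is Proposition~\ref{left_swap_equiv} and the closure-under-moves argument linking (1),(2),(3); everything on the tableau side is a transitivity-of-$\leqslant$ triviality once one observes that the shape is forced to match. The main obstacle I anticipate is being careful that membership of the ``canonical witness'' ($D_\gamma$ on the diagram side, $\key(\gamma)$ on the tableau side) in the set indexed by $\gamma$ is genuinely automatic — for $D_\gamma$ this is the definition of the closure, and for $\key(\gamma)$ one must check it is a valid $\RSSYT$ of shape $\gamma^+$ with left key equal to itself, which is exactly the Remark stating $\KL(T)$ is always a key tableau together with the fact that key tableaux are fixed by $\KL$. No delicate estimates or case analysis beyond what is already in Section~\ref{section.Bruhat} should be needed.
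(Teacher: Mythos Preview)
Your proposal is correct and follows essentially the same approach as the paper: both hinge on Propositions~\ref{left_swap_inclusion} and~\ref{left_swap_equiv} for the equivalence $(1)\Leftrightarrow(2)$, use the closure of $\KKD(\alpha)$ under $K$-Kohnert moves together with the ``no ghost cells'' remark to handle $(2)\Leftrightarrow(3)$, and appeal directly to the defining left-key inequality (with $\key(\gamma)$ as the canonical witness) for $(1)\Leftrightarrow(4)\Leftrightarrow(5)$. The paper is simply terser, writing ``by definition'' where you spell out the transitivity and the shape-matching observation.
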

\begin{proof}
We show the following directions.
\begin{itemize}
    \item $(1)\Longleftrightarrow (2)$  This follows from Propositions~\ref{left_swap_inclusion} and \ref{left_swap_equiv}.
    \item $(1)\Longleftrightarrow (4)$ This is true by definition.
    \item $(1)\Longleftrightarrow (5)$ This is true by definition.
    \item $(2)\implies (3)$  This is true by definition.
    \item $(3)\implies (2)$  Since $\KKD(\gamma)\subseteq \KKD(\alpha)$ and $\KD(\gamma)\subseteq \KKD(\gamma)$, we have $\KD(\gamma)\subseteq \KKD(\alpha)$. Since there is no ghost cells in elements of $\KD(\gamma)$, we have $\KD(\gamma)\subseteq \KD(\alpha)$.
\end{itemize}
\end{proof}

\section{Recursive descriptions of $\KKD(\alpha)$ and $\RSVT(\alpha)$}
\label{section.describe KKD RSVT}
To check our maps $\Phi_\alpha$ and $\Psi_\alpha$ are well-defined,
we need to study their domains. 
In this section, we give necessary and sufficient criteria on when a diagram pair is in $\KKD(\alpha)$ (resp. $\RSVT(\alpha)$).
Our criteria will be recursive. 

For a diagram pair $D = (K, G)$,
we can send it to a triple $(K_1, G_1, d)$
where $K_1$, $G_1$ are disjoint 
subsets of $[n]$ and $d$ is a diagram pair.
They are defined as follows:
\begin{enumerate}
\item[$\bullet$] $K_1 := \{r : (1,r) \in K\}$
\item[$\bullet$] $G_1 := \{r : (1,r) \in G\}$
\item[$\bullet$] $d$ is the diagram pair 
with a Kohnert cell (resp. ghost cell) at $(c,r)$
if $D$ has a Kohnert cell 
(resp. ghost cell) at $(c + 1,r)$ with $c \geqslant 1$.
\end{enumerate}

The map $D \rightarrow (K_1, G_1, d)$ is invertible:
given disjoint $K_1, G_1 \subseteq [n]$ 
and a diagram pair $d$,
we can uniquely recover $D$.
Thus, we may identify
a diagram pair with its image and write
$D = (K_1, G_1, d)$.

Fix a weak composition $\alpha$ in this section.
An element of $\KKD(\alpha)$
or $\RSVT(\alpha)$
can be written as $(K_1, G_1, d)$.
We will find conditions on this triple 
to determine when 
$(K_1, G_1, d) \in \KKD(\alpha)$ and $\RSVT(\alpha)$.

\subsection{Describing $\KKD(\alpha)$}

First, we describe the condition for a diagram pair 
$(K_1, G_1, d)$ to live in $\KKD(\alpha)$.
\begin{thm}

\label{T: KKD image}
The diagram pair $(K_1, G_1, d)$ is in $\KKD(\alpha)$ 
if and only if it satisfies:
\begin{enumerate}
\item $K_1$ and $G_1$ are 
disjoint subsets of $[n]$.
\item 
$K_1 \leqslant \supp(\alpha) $.
\item 
For each $g \in G_1$,
$|[g,n] \cap \supp(\alpha)| > |[g,n] \cap K_1|$.
\item 
$d \in \KKD(\overline{M(\alpha, K_1)})$
\end{enumerate}
\end{thm}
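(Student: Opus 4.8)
The plan is to prove both directions by induction on the number of nonzero columns (equivalently, on $\max(\alpha)$, or on the number of columns of $\key(\alpha)$), using the fact that $\overline{M(\alpha, K_1)}$ has strictly fewer nonzero columns than $\alpha$ so the recursion terminates. The key structural observation is that a $K$-Kohnert move on a diagram pair $D = (K_1, G_1, d)$ either acts entirely within column $1$ (affecting only $K_1$ and $G_1$, since the bottom rows are where column-$1$ cells land) or acts entirely within columns $\geqslant 2$ (affecting only $d$), \emph{except} for moves that originate in a column $\geqslant 2$ and drop a Kohnert cell into column $1$'s territory --- but those cannot happen because a Kohnert move only moves a cell \emph{down within its own column}. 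So in fact the moves decouple: the column-$1$ data evolves under $K$-Kohnert moves on a single column, and $d$ evolves under $K$-Kohnert moves relative to the shifted composition.

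For the forward direction, I would start from $D_\alpha$, whose triple is $(\{i : \alpha_i \geqslant 1\}, \emptyset, D_{\overline{\alpha}})$, note that conditions (1)--(4) hold there (with $M(\alpha, \supp(\alpha)) = \alpha$, using Corollary~\ref{C: m exists}/\ref{C: M exists}), and show that each $K$-Kohnert move preserves conditions (1)--(4). A move within columns $\geqslant 2$ changes only $d$; but here one must be careful: the base composition for the recursion is $\overline{M(\alpha, K_1)}$, which depends on $K_1$, so a pure column-$1$ move (which can enlarge $G_1$ or lower a cell of $K_1$) changes the allowed set for $d$. The crucial lemma is that a column-$1$ $K$-Kohnert move sends $K_1 \mapsto K_1'$ with $K_1' \leqslant K_1$ (lowering the bottom Kohnert cell) or leaves $K_1$ fixed while adding a ghost, and that in either case $\overline{M(\alpha, K_1')} \leqslant \overline{M(\alpha, K_1)}$, so $d \in \KKD(\overline{M(\alpha, K_1)}) \subseteq \KKD(\overline{M(\alpha, K_1')})$ by Corollary~\ref{C: KD RSVT order}. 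Combined with Lemma~\ref{L: m and M} (relating $M(\alpha, K_1)$ to the condition $\alpha \geqslant \mathbbm{1}_{K_1} + m(\gamma, K_1)$) and Lemma~\ref{L: Lift M}, this handles the bookkeeping of how conditions (2)--(3) interact with condition (4). I would need to separately verify that condition (3)'s inequality is exactly the condition for a ghost cell to be legally placeable in column $1$ --- this is essentially unwinding the definition of a ghost move together with Lemma~\ref{lem:subset_order}(3).

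For the reverse direction, given $(K_1, G_1, d)$ satisfying (1)--(4), I would construct it from $D_\alpha$ in stages. First build an element of $\KKD(\alpha)$ with column-$1$ data equal to a single key column $\supp(\alpha)$ and with $d$-part equal to $D_{\overline{\alpha}}$; then, using condition (2) and Proposition~\ref{left_swap_equiv} together with Lemma~\ref{L: M and left swap}, perform column-$1$ Kohnert moves to bring the column-$1$ Kohnert cells down to $K_1$ --- this is legal precisely because $K_1 \leqslant \supp(\alpha)$ means $\mathbbm{1}_{K_1}$ is reachable from $\mathbbm{1}_{\supp(\alpha)}$ by left swaps, hence by Kohnert moves on a single column. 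Next add the ghost cells $G_1$ in column $1$ one at a time from the bottom up; condition (3) guarantees that at each stage there is still a Kohnert cell above the target position that can be dropped with a ghost left behind. At this point $d$ equals $D_{\overline{M(\alpha, K_1)}}$ (one must check that after all the column-$1$ manipulation the recursion correctly leaves the right shifted base composition --- this uses that the column-$1$ operations do not touch columns $\geqslant 2$). Finally apply the inductive hypothesis: condition (4) says $d \in \KKD(\overline{M(\alpha, K_1)})$, so $d$ is reachable from $D_{\overline{M(\alpha, K_1)}}$ by $K$-Kohnert moves in columns $\geqslant 2$, and these moves lift verbatim to moves on the full diagram pair.

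The main obstacle I anticipate is the interaction between column-$1$ moves and the base composition $\overline{M(\alpha, K_1)}$ governing $d$: I need that as $K_1$ decreases in Bruhat order the set $\KKD(\overline{M(\alpha, K_1)})$ only grows, and more delicately that the specific sequence of column-$1$ operations used in the reverse direction leaves $d$ sitting over exactly the right composition at the end rather than an intermediate one. Pinning down the commutation --- that one may do all column-$1$ operations first and all column-$\geqslant 2$ operations afterward, in either order, without changing reachability --- is the technical heart, and it rests on the elementary but must-be-stated fact that a single $K$-Kohnert move never transfers a cell between column $1$ and the other columns. I expect Lemmas~\ref{L: Lift M}, \ref{L: M and left swap}, and \ref{L: m and M}, plus Corollary~\ref{C: KD RSVT order}, to furnish exactly the monotonicity needed, so once the decoupling is set up cleanly the rest is careful induction.
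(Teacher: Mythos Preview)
Your forward-direction argument has a genuine gap. You claim that a column-$1$ move $K_1 \mapsto K_1'$ (with $K_1' \leqslant K_1$) gives the containment $\KKD(\overline{M(\alpha, K_1)}) \subseteq \KKD(\overline{M(\alpha, K_1')})$, but this is false: take $\alpha = (0,2,1)$, $K_1 = \{2,3\}$, $K_1' = \{1,3\}$; then $\overline{M(\alpha,K_1)} = (0,1,0)$ while $\overline{M(\alpha,K_1')} = (1,0,0)$, and $\KKD((0,1,0))$ has three elements while $\KKD((1,0,0))$ has one. (Your written inequality $\overline{M(\alpha,K_1')} \leqslant \overline{M(\alpha,K_1)}$ is actually the one that holds here, but it gives the opposite containment.) The paper does not rely on any such monotonicity. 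Instead it uses the key extra information that when the cell $(1,i)$ is moved it was rightmost in row $i$, so $d$ has \emph{no cells at all in row $i$}. Via the inductive hypothesis and Lemma~\ref{L: Reverse K-Kohnert} this yields some $\gamma \leqslant \overline{M(\alpha,K_1)}$ with $\gamma_i = 0$ and $d \in \KKD(\gamma)$; a direct column-by-column tableau argument then shows $\gamma \leqslant \overline{M(\alpha,K_1')}$. The vanishing $\gamma_i = 0$ is what makes the argument go through, and it is absent from your outline.

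Your reverse direction also does not work as written. You propose to start from $D_\alpha$, perform column-$1$ moves to reach $K_1$, insert the ghosts $G_1$, and then claim that at this point the $d$-part equals $D_{\overline{M(\alpha,K_1)}}$. But column-$1$ $K$-Kohnert moves do not alter columns $\geqslant 2$, so the $d$-part would remain $D_{\overline{\alpha}}$; moreover, a $K$-Kohnert move on $(1,i)$ requires $(1,i)$ to be the rightmost cell in its row, which fails in $D_\alpha$ whenever $\alpha_i \geqslant 2$, so the column-$1$ moves you want are not even available. The paper avoids both problems by running \emph{reverse} $K$-Kohnert moves from $D$ back to a key diagram pair: first undo moves in columns $\geqslant 2$ to make $d$ the key diagram of $\overline{M(\alpha,K_1)}$ (using condition~(4) and induction), and then use Lemma~\ref{L: Lift M} together with condition~(3) to lift an entire row $k$ up to each ghost row $g$, absorbing the ghosts of $G_1$ one at a time.
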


The rest of this subsection proves it. 
First, 
we want to show if a diagram pair $D$ satisfies 
these conditions, then $D \in \KKD(\alpha)$.
This can be implied from the following lemma:
\begin{lemma}
\label{L: Reverse K-Kohnert}
If $D = (K_1, G_1, d)$ satisfies 
conditions (1)-(4) in Theorem~\ref{T: KKD image},
then we can find $\gamma \leqslant \alpha$
such that $D \in \KKD(\gamma)$.
Moreover, $\gamma_i = 0$ 
if $i \notin K_1 \sqcup G_1$.
\end{lemma}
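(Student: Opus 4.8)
\textbf{Proof proposal for Lemma~\ref{L: Reverse K-Kohnert}.}

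The plan is to construct $\gamma$ explicitly and then exhibit a sequence of $K$-Kohnert moves on $D_\gamma$ producing $D$. The natural candidate is $\gamma = \mathbbm{1}_{K_1 \sqcup G_1} + m\big(\overline{M(\alpha, K_1)}, \, K_1 \sqcup G_1\big)$, provided this quantity exists. First I would verify existence: condition (3) says that for each $g \in G_1$ we have $|[g,n]\cap\supp(\alpha)| > |[g,n]\cap K_1|$, and condition (2) says $K_1 \leqslant \supp(\alpha)$; these should combine (via Corollary~\ref{C: m exists} and Corollary~\ref{C: M exists}) to guarantee $m(\overline{M(\alpha,K_1)}, K_1 \sqcup G_1)$ exists, since $K_1 \sqcup G_1$ has enough large elements. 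By construction $\supp(\gamma) \subseteq K_1 \sqcup G_1$, which immediately gives $\gamma_i = 0$ for $i \notin K_1 \sqcup G_1$, and since $\overline{M(\alpha,K_1)} \leqslant \overline{\alpha}$ we should get $\gamma \leqslant \alpha$ after checking the support and Bruhat-order bookkeeping, using Lemma~\ref{L: m and M} to translate between the $m$ and $M$ constructions.

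Next I would analyze the first-column data of $D_\gamma$: by the definition of $\gamma$, row $r$ of column $1$ of $D_\gamma$ is occupied exactly when $r \in K_1 \sqcup G_1$, and deleting column $1$ from $D_\gamma$ leaves $D_{\overline{M(\alpha,K_1)}}$ in the triple notation $D_\gamma = (K_1 \sqcup G_1, \emptyset, D_{\overline{M(\alpha, K_1)}})$. By condition (4), $d \in \KKD(\overline{M(\alpha, K_1)})$, so there is a sequence of $K$-Kohnert moves taking $D_{\overline{M(\alpha,K_1)}}$ to $d$; these moves never touch column $1$, so applying the same sequence to $D_\gamma$ takes it to $(K_1 \sqcup G_1, \emptyset, d)$. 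It then remains to handle column $1$: starting from all of $K_1 \sqcup G_1$ being Kohnert cells in column $1$ and needing to end at $K$-cells on $K_1$ and ghost cells on $G_1$. The idea is to process the rows of $K_1 \sqcup G_1$ from the bottom up: whenever a cell sits in a row indexed by $G_1$, we want it to have arrived there via a ghost move from a higher Kohnert cell in the same column. One shows the starting configuration is exactly $D_\gamma$ restricted to column $1$ and that the condition (3) inequality, rephrased through Lemma~\ref{lem:subset_order}, guarantees that for each $g \in G_1$ there is always a Kohnert cell strictly above row $g$ in column $1$ available to be slid down into row $g$ leaving a ghost — since column $1$ contains $|\supp(\gamma)\cap[g,n]| = |[g,n]\cap(K_1\sqcup G_1)|$ cells and we have already committed the ghost positions above.

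The main obstacle I anticipate is this last step: one must carefully sequence the column-$1$ moves so that (a) every ghost cell in $G_1$ is genuinely produced by a $K$-Kohnert ghost move (the selected cell must be the rightmost in its row at the moment of the move, and it cannot jump over an already-placed ghost), and (b) the Kohnert cells land precisely on $K_1$. Getting the move order right — likely bottom-to-top, so that a ghost deposited low never blocks a later move from above — and proving that condition (3) is exactly the combinatorial condition ensuring enough ``room'' at each stage, is where the real content lies; the rest is bookkeeping with the triple decomposition and the order-theoretic lemmas from Section~\ref{section.Bruhat}. A possible subtlety is that while sliding cells in column $1$ we are working in $\KKD(\gamma)$, not $\KKD(\alpha)$, so I would want the final conclusion $D \in \KKD(\gamma)$ together with $\gamma \leqslant \alpha$ and Corollary~\ref{C: KD RSVT order} to upgrade to $D \in \KKD(\alpha)$ — but since the lemma only asks for the existence of such a $\gamma$, this upgrade is not even needed here.
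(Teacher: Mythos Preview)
Your candidate $\gamma$ is wrong in a basic way. Since $\supp(\overline{M(\alpha,K_1)}) \subseteq K_1 \subseteq K_1 \sqcup G_1$, the composition $\overline{M(\alpha,K_1)}$ already has support inside $K_1 \sqcup G_1$, so
\[
m\big(\overline{M(\alpha,K_1)},\, K_1 \sqcup G_1\big) = \overline{M(\alpha,K_1)}.
\]
Hence your $\gamma = \mathbbm{1}_{K_1 \sqcup G_1} + \overline{M(\alpha,K_1)} = \mathbbm{1}_{G_1} + M(\alpha,K_1)$, and $|\gamma| = |\alpha| + |G_1|$. Whenever $G_1 \neq \emptyset$, the key tableaux $\key(\gamma)$ and $\key(\alpha)$ have different shapes, so $\gamma \leqslant \alpha$ is simply false. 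Equivalently, any $\gamma \leqslant \alpha$ must have $|\supp(\gamma)| = |\supp(\alpha)| = |K_1|$, so the first column of $D_\gamma$ cannot have a cell in every row of $K_1 \sqcup G_1$; your triple decomposition $D_\gamma = (K_1 \sqcup G_1, \emptyset, D_{\overline{M(\alpha,K_1)}})$ is impossible for the $\gamma$ you need. The column-$1$ analysis also has the mechanics of ghost moves reversed: a ghost is left at the \emph{source} (higher) position when a Kohnert cell moves down, so to end with a ghost at $(1,g)$ the Kohnert cell must have started at $(1,g)$ and moved to some lower row in $K_1$, not the other way around.

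The paper avoids naming $\gamma$ in advance and instead runs \emph{reverse} $K$-Kohnert moves on $D$. First it reverses the moves in columns $\geqslant 2$ to turn $d$ into the key diagram of $\overline{M(\alpha,K_1)}$, so the Kohnert cells of the whole picture form $\key(M(\alpha,K_1))$. Then it absorbs the ghosts one at a time from the bottom: for $g = \min(G_1)$, condition (3) feeds into Lemma~\ref{L: Lift M} to produce $k \in K_1 \cap [1,g)$ with $M(\alpha,(K_1 \sqcup \{g\}) - \{k\}) = (k\ g)\, M(\alpha,K_1)$; one then lifts all of row $k$ to row $g$ by reverse $K$-Kohnert moves, which erases the ghost at $(1,g)$ and keeps the Kohnert cells in key form. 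Iterating, $K_1$ is transformed into some $K_1^{\mathrm{final}} \subseteq K_1 \sqcup G_1$ of size $|K_1|$, and $\gamma = M(\alpha, K_1^{\mathrm{final}})$ is discovered at the end rather than guessed at the start. The key lemma you are missing is Lemma~\ref{L: Lift M}; without it there is no mechanism linking condition (3) to the existence of a row swap that keeps you inside the family $\{M(\alpha,S)\}$.
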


Notice $D \in \KKD(\gamma) \subseteq \KKD(\alpha)$
by Corollary~\ref{C: KD RSVT order}.
Thus, this lemma implies the reverse direction
of Theorem~\ref{T: KKD image}.

\begin{proof}
We describe an algorithm that turns $D$
into a key diagram pair of some weak composition $\gamma$
via reverse K-Kohnert moves.
First, we consider $d$.
Since $d \in \KKD(\overline{M(\alpha, K_1)})$,
we may do reverse K-Kohnert moves on $d$
to obtain the key diagram pair of 
$\overline{M(\alpha, K_1)}$.

Now, Kohnert cells in $D$ form the key diagram pair
of $M(\alpha, K_1)$.
If $G_1$ is empty, we are done.
Otherwise, let $g = \min(G_1)$.
By Lemma~\ref{L: Lift M},
we can find $k \in K_1 \cap [1, g)$ such that 
$M(\alpha, (K_1 \sqcup \{g\}) - \{k\} )
= (g \:\: k) M(\alpha, K_1)$.
We perform reverse K-Kohnert moves
to lift the entire row $k$ of $D$ to row $g$, and remove the ghost at $(1,g)$. Next we assign $(K_1 \sqcup \{g\}) - \{k\}$ to $K_1$
and assign $G_1 - \{g\}$ to $G_1$.
Now, Kohnert cells in $D$ still form 
the key diagram pair of $M(\alpha, K_1)$.
We may repeat the steps above until $G_1$ is empty.
The resulting diagram pair is a key diagram pair
for some weak composition $\gamma$.
Clearly, $\gamma_i = 0$ 
if $(1, i)$ was not a cell in $D$. 
\end{proof}

\begin{proof}[Proof of Theorem~\ref{T: KKD image}]

The reverse direction is already shown. 
For each $D = (K_1, G_1,d) \in KKD(\alpha)$, 
we need to show
the four conditions are satisfied.
We prove by induction on $\max(\alpha)$.
When $\max(\alpha) = 0$,
the claim is immediate.

Assume the statement is true
for any weak composition whose maximum entry
is less than $\max(\alpha)$.
For any $D \in \KKD(\alpha)$,
we want to check it satisfies 
the four conditions.
Condition (1) is immediate. 
We prove the other conditions
by an induction on K-Kohnert moves.
If $D$ is the key diagram pair of $\alpha$,
then the last three conditions are immediate.
Now assume $(K_1, G_1, d) \in \KKD(\alpha)$
satisfies the last three conditions. 
Perform one K-Kohnert move and obtain $(K_1', G_1', d')$.
We want to show $(K_1', G_1', d')$ also satisfies
the last three conditions. 
If the K-Kohnert move is not on column 1,
then $K_1 = K_1'$ and $G_1 = G_1'$,
which gives us the second and the third condition.
Notice that $d'$ is obtained from $d$ by one K-Kohnert move,
so the last condition is also satisfied. 

Now suppose the K-Kohnert move is on column 1. 
$K_1'= (K_1 - \{i\}) \sqcup \{j\}$ with $i > j$, and $G_1'$ is either $G_1$ or $G_1 \sqcup \{ i\}$. We check the last three conditions below.
\begin{enumerate}
    \setItemnumber{2}
    \item $\supp(\alpha) \geqslant K_1 \geqslant K_1'$.
    \item We check $|[g,n] \cap \supp(\alpha)| > |[g,n] \cap K_1'|$,
for each $g \in G_1 \sqcup \{ i\}$. \\If $g \in G_1$, we have $$|[g,n] \cap \supp(\alpha)| > |[g,n] \cap K_1| \geqslant |[g,n] \cap K_1'|.$$
For $g = i$, we have  $$|[i,n] \cap \supp(\alpha)| \geqslant |[i,n] \cap K_1| > |[i,n] \cap K_1'|.$$
    \item We have $d' = d \in \KKD(\overline{M(\alpha, K_1)})$.
By our inductive hypothesis,
$d$ satisfies all four conditions of $\KKD(\overline{M(\alpha, K_1)})$.
Using Lemma~\ref{L: Reverse K-Kohnert},
we may perform reverse K-Kohnert moves on $d$
and get the key diagram pair of 
$\gamma \leqslant \overline{M(\alpha, K_1)}$.
Since we can make a K-Kohnert move on $(1,i)$,
$d$ has no cells on row $i$.
Thus, we know $\gamma_i = 0$.
To show $d \in \KKD(\overline{M(\alpha, K_1')})$, 
we show $\gamma \leqslant  \overline{M(\alpha, K_1')}$. 

Let $T = \key(M(\alpha,K_1))$.
For each column of $T$ that contains $i$,
we replace $i$ by the largest $i'$ such that 
$i' < i$ and $i'$ is not in this column.
Then we sort the column into strictly decreasing order.
Let $T'$ be the resulting tableau. 
It is clear that $T'$ is a key.
Let $\sigma = \wt(T')$. 
We make two observations about $T'$ and $\sigma$:
\begin{itemize}
\item Column 1 of $T'$ consists of $K_1'$,
so $\supp(\sigma) = K_1'$.
\item The tableau $T'$ is entry-wise less than $T$.
Thus, $\sigma \leqslant M(\alpha,K_1) \leqslant \alpha$.
\end{itemize}
By definition, these two observations yield
$\sigma \leqslant M(\alpha, K_1')$.

It remains to check $\gamma \leqslant \overline{\sigma}$.
Let $g_1> \dots > g_s$ be the numbers 
in column $c$ of $\key(\gamma)$.
By $\gamma_i = 0$, none of these numbers is $i$.
Let $t_1 > \dots > t_s$ (resp. $t_1' > \dots > t_s'$)
be numbers in column $c+1$ of $T$ (resp. $T'$).
By $\gamma \leqslant \overline{M(\alpha, K_1)}$,
we know $g_k \leqslant t_k$ for $1 \leqslant k \leqslant s$.
If $i$ is not in column $c+1$ of $T$,
then we are done. 
Otherwise, assume $t_a = i$.
We know $t_1',\dots, t_s'$
are obtained from $t_1, \dots, t_s$ by changing
$t_a, \cdots, t_b$ into $t_a - 1, \dots, t_b - 1$.
Since $g_a \leqslant i$ and $g_a \neq i$,
we have $g_a \leqslant i - 1 = t_{a}'$.
Then $g_{a+1} \leqslant i - 2 = t_{a + 1}'$.
Following this argument, 
we have $g_k \leqslant t_k'$ for $1 \leqslant k \leqslant s$.
Thus, $\gamma \leqslant \overline{\sigma} 
\leqslant M(\alpha, K_1')$.
\end{enumerate}
\end{proof}

\subsection{Describing $\RSVT(\alpha)$}
Take $T \in \RSVT(\alpha)$
and write $T$ as $(L_1, E_1, t)$.
First, we notice the following relation
between $K_-(T), K_-(t)$ and $L_1$:
\begin{lemma}
\label{L: Left key and m}
Assume $T = (L_1, E_1, t) \in \RSVT(\alpha)$. 
Assume $K_-(t) = \key(\gamma)$.
Then $\wt(K_-(T)) = \mathbbm{1}_{L_1}
+ m(\gamma, L_1)$.
\end{lemma}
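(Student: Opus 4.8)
\textbf{Proof proposal for Lemma~\ref{L: Left key and m}.}

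The plan is to compute $K_-(T) = K_-(\LL(T))$ directly from the column structure of $\LL(T)$ using the $\lhd$ operator, and to recognize the recursion that defines $m(\gamma, L_1)$ inside this computation. Write $\LL(T)$ as a $\RSSYT$ with columns $C_1, C_2, \dots, C_n$ from left to right, so that $C_1$ is the set of leading numbers in column $1$ of $T$, which is exactly $L_1$ (recall each box's leading number is its maximum, and the leading numbers in a column of an $\RSVT$ strictly decrease down the column). The columns $C_2, C_3, \dots$ are precisely the columns of $\LL(t)$ shifted by one. By definition, the first column of $K_-(T)$ is $C_1 \lhd C_2 \lhd \dots \lhd C_n = C_1 \lhd (C_2 \lhd \dots \lhd C_n)$, and the first column of $K_-(t) = \key(\gamma)$ is $C_2 \lhd C_3 \lhd \dots \lhd C_n$ (again with the shift). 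So the first column of $K_-(T)$ is $L_1 \lhd A_1$, where $A_1$ is the first column of $\key(\gamma)$, i.e.\ $A_1 = \{i : \gamma_i \geqslant 1\} = \supp(\gamma)$.

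Next I would identify $\wt(K_-(T))$ in terms of its columns. Since $K_-(T)$ is a key tableau, its shape is that of a partition and each entry of $\wt(K_-(T))$ counts how many columns contain a given row-index; equivalently, $\wt(K_-(T))_i$ is the largest $c$ such that $i$ lies in column $c$ of $K_-(T)$. The first column contributes exactly $\mathbbm{1}_{L_1}$ to the weight (it contains $i$ iff $i \in L_1$). The remaining columns of $K_-(T)$ are the columns of $K_-(t) = \key(\gamma)$, shifted by one — here I need the standard fact that for a key tableau, $\text{column } c \lhd \text{column }(c+1) \lhd \dots$ stabilizes: the $c$-th column of the left key of $\LL(T)$ equals the $(c-1)$-st column of the left key of $t$ for $c \geqslant 2$, because the relevant subword of columns is the same. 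Granting that, $\wt(K_-(T))$ equals $\mathbbm{1}_{L_1}$ plus the weak composition whose $\key$ has first column $L_1 \lhd A_1$ in position $1$ — wait, more carefully: $\wt(K_-(T))_i = 1 + (\text{number of columns} \geqslant 2 \text{ containing } i)$ when $i \in L_1$, and $= (\text{number of columns} \geqslant 2 \text{ containing } i)$ otherwise. The columns $\geqslant 2$ of $K_-(T)$, being $\lhd$-products of suffixes of $(C_2, \dots, C_n)$, are governed by $\key(\gamma)$ together with the effect of prepending $L_1$ via $\lhd$.

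The cleanest route is then to invoke Lemma~\ref{lem:algorithm_lhd}: run the algorithm computing $m(\gamma, L_1)$ and let $\sigma$ be its output (it exists because $T \in \RSVT(\alpha)$ forces the left-key condition, so $L_1 \lhd A_c$ is well-defined for every $c$, which by Corollary~\ref{C: m exists} is exactly the existence criterion). Lemma~\ref{lem:algorithm_lhd} says the $c$-th column of $\key(\sigma)$ equals $L_1 \lhd A_c$, where $A_c$ is the $c$-th column of $\key(\gamma)$. I claim the $c$-th column of $K_-(T)$ is precisely $L_1 \lhd A_c$ as well: for $c = 1$ this is the computation above, and for general $c$ it follows because $C_1 \lhd (C_2 \lhd \dots) $ restricted to what lands in column $c$ is $L_1 \lhd (\text{column } c \text{ of } K_-(t)) = L_1 \lhd A_c$, using the stabilization fact for $K_-(t) = \key(\gamma)$ and associativity of the nested $\lhd$. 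Hence $\key(\sigma) = \LL(K_-(T))$ after removing the first column's extra contribution — more precisely, the columns of $K_-(T)$ are the first column $L_1$ together with the columns of $\key(\sigma)$: since $\key(\sigma)$ has first column $L_1 \lhd A_1$ which may differ from $L_1$, I instead argue $\wt(K_-(T)) = \mathbbm{1}_{L_1} + \wt(\key(\sigma)) = \mathbbm{1}_{L_1} + \sigma = \mathbbm{1}_{L_1} + m(\gamma, L_1)$ by reading column-counts: row $i$ appears in $\min\{c : i \in L_1 \lhd A_c\}$-many... rather, in exactly those columns $c$ with $i \in (c\text{-th column of } K_-(T))$, and the number of such columns with $c \geqslant 2$ is exactly $\sigma_i$ by Lemma~\ref{lem:algorithm_lhd} applied to the shifted indexing, while column $1$ adds $1$ iff $i \in L_1$.

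\textbf{Main obstacle.} The crux is the bookkeeping that separates column $1$ (contributing $\mathbbm{1}_{L_1}$) from columns $\geqslant 2$ (contributing $m(\gamma, L_1)$), together with the stabilization property of left keys — namely that the higher columns of $K_-(T)$ are computed from $K_-(t)$ rather than from $t$ itself — and checking that the existence of $m(\gamma, L_1)$ is automatic from $T \in \RSVT(\alpha)$ via Corollary~\ref{C: m exists}. Once the indexing is aligned so that ``$c$-th column of $K_-(T)$'' matches ``$L_1 \lhd (c\text{-th column of }\key(\gamma))$'', Lemma~\ref{lem:algorithm_lhd} closes the argument immediately; getting that alignment right, and verifying associativity/stabilization of the nested $\lhd$ on key-shaped tableaux, is where the real work lies.
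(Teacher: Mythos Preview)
Your approach is essentially the paper's: compare $K_-(T)$ with $\key(\mathbbm{1}_{L_1}+m(\gamma,L_1))$ column by column, invoking Lemma~\ref{lem:algorithm_lhd} to identify column $c$ of $\key(m(\gamma,L_1))$ as $L_1\lhd A_c$. That is exactly how the paper proceeds.

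However, you have a genuine indexing error that muddles the argument. You write that ``the first column of $K_-(T)$ is $C_1\lhd C_2\lhd\dots\lhd C_n$''; by Definition~\ref{def:left_key} this is the \emph{last} column, not the first. The $k$-th column of $K_-(T)$ is $C_1\lhd\dots\lhd C_k$, so column $1$ is just $C_1=L_1$. You seem to realize this later (``the columns of $K_-(T)$ are the first column $L_1$ together with\dots''), but the writeup never cleanly recovers. Once the indexing is fixed, the argument is one line: for $c\geqslant 2$, column $c$ of $K_-(T)$ is $L_1\lhd(C_2\lhd\dots\lhd C_c)=L_1\lhd A_{c-1}$, which by Lemma~\ref{lem:algorithm_lhd} is column $c-1$ of $\key(m(\gamma,L_1))$, hence column $c$ of $\key(\mathbbm{1}_{L_1}+m(\gamma,L_1))$.

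Your concerns about ``stabilization'' and ``associativity of nested $\lhd$'' are unnecessary: the operation is \emph{defined} to be right-associative, so $C_1\lhd C_2\lhd\dots\lhd C_c = C_1\lhd(C_2\lhd\dots\lhd C_c)$ holds by fiat, and $C_2\lhd\dots\lhd C_c$ is literally column $c-1$ of $K_-(t)=\key(\gamma)$. There is no separate stabilization fact to verify.
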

\begin{proof}
Let $S = \key(\mathbbm{1}_{L_1}
+ m(\gamma, L_1))$. 
View $S$ and $T$ as tableaux.
Let $T_c$ be the set consisting of 
leading numbers in column $c$ of $T$,
so $T_1 = L_1$.
It suffices to show that $K_-(T) = S$.
We compare these two tableaux column by column. 
Apparently, column 1 of $K_-(T)$ and column 1 of $S$ 
both consist of $L_1$.

Consider the column $c$ of $K_-(T)$ and $S$ for $c>1$. 
Column $c$ of $S$
agrees with column $c-1$ of $\key(m(\gamma, L_1))$.
Let $\key(\gamma)_{c-1}$ consists
of numbers in column $c-1$
of $\key(\gamma)$.
By Lemma~\ref{lem:algorithm_lhd},
column $c-1$ of $\key(m(\gamma, L_1))$ 
consists of $L_1 \lhd \key(\gamma)_{c-1}$.
Since $\key(\gamma) = K_-(t)$, 
we have $\key(\gamma)_{c-1} = T_2 \lhd \dots \lhd T_c$.
Thus, column $c$ of $S$ consists of 
$L_1 \lhd T_2 \lhd \dots \lhd T_c = T_1 \lhd T_2 \lhd \dots \lhd T_c$,
which agrees with column $c$ of $K_-(T)$.
\end{proof}

\begin{thm}
\label{T: RSVT image}
The diagram pair $(L_1,E_1,t)$ is in $\RSVT(\alpha)$ 
if and only if it satisfies:
\begin{enumerate}
\item $L_1$ and $E_1$ are 
disjoint subsets of $[n]$.
\item 
$L_1 \leqslant \supp(\alpha)$.
\item 
Let $L_1'$ be the set of 
row indices of Kohnert cells in column 1 of $t$.
For each $e \in E_1$,
$|(e, n] \cap L_1| > |(e, n] \cap L_1'|$.
\item 
$t \in \RSVT(\overline{M(\alpha, L_1)})$
\end{enumerate}
\end{thm}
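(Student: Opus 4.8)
The plan is to prove Theorem~\ref{T: RSVT image} by mirroring the structure of the proof of Theorem~\ref{T: KKD image}, but passing through the left-key condition via Lemma~\ref{L: Left key and m} and the correspondence between $\RSVT$ and diagram pairs. Throughout, fix $\alpha$ and write $T = (L_1, E_1, t)$ as in the statement. Recall that $t$ is (the diagram pair of) the tableau obtained from $T$ by deleting its first column and shifting all other columns left, so the columns of $T$ are $T_1 = L_1$ (as a set of leading numbers, together with extra numbers recorded by $E_1$) followed by the columns of $t$. The key identity $\LL(T) = (L_1, \emptyset, \LL(t))$ and $K_-(T) = K_-(\LL(T))$ will be used freely.

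First I would prove the \emph{forward} direction: if $T \in \RSVT(\alpha)$ then (1)--(4) hold. Condition (1) is immediate since leading and extra numbers in column 1 are distinct. For (4): the sub-tableau $t$ is certainly in $\RSVT_{\mu}$ for the appropriate shape $\mu$, and we must check $K_-(t) \leqslant \key(\overline{M(\alpha, L_1)})$. Write $K_-(t) = \key(\gamma)$. By Lemma~\ref{L: Left key and m}, $\wt(K_-(T)) = \mathbbm{1}_{L_1} + m(\gamma, L_1)$; since $T \in \RSVT(\alpha)$ we have $K_-(T) \leqslant \key(\alpha)$, i.e. $\mathbbm{1}_{L_1} + m(\gamma, L_1) \leqslant \alpha$. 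In particular $m(\gamma, L_1)$ exists, and then Lemma~\ref{L: m and M} (with $S = L_1$, noting $|L_1| = |\supp(\alpha)|$ since both equal the number of rows of $\alpha^+$, which follows from $K_-(T) \leqslant \key(\alpha)$ forcing equal shapes) gives that $M(\alpha, L_1)$ exists and $\overline{M(\alpha, L_1)} \geqslant \gamma$, i.e. $\gamma \leqslant \overline{M(\alpha, L_1)}$, which is exactly (4). Condition (2): $L_1$ is column 1 of $\LL(T)$, hence column 1 of $K_-(\LL(T))$ is $L_1$, and $K_-(T) \leqslant \key(\alpha)$ restricted to column 1 gives $L_1 \leqslant \supp(\alpha)$. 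For (3), I would argue from the set-valued column condition: an extra number $e$ in column 1 of $T$ must be strictly smaller than the leading number of its box, and column-strictness of $\LL(T)$ restricted to column 1 versus the leading entries of $t$'s first column forces the counting inequality $|(e,n]\cap L_1| > |(e,n]\cap L_1'|$; essentially this records that each extra number in column 1 ``sits on top of'' enough leading numbers, and it is the set-valued analogue of condition (3) in Theorem~\ref{T: KKD image}. I expect this to need a careful unwinding of how the recovery procedure (described in subsection on viewing $\RSVT$ as a diagram pair) places extra numbers, but it should be a direct bookkeeping argument.

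Next I would prove the \emph{reverse} direction: given (1)--(4), build $T \in \RSVT(\alpha)$. Starting from $t \in \RSVT(\overline{M(\alpha, L_1)})$, prepend a first column whose leading numbers form $L_1$ and whose extra numbers are placed according to $E_1$ via the standard recovery rule; condition (3) is exactly what guarantees every extra number in $E_1$ lands in a legal (lower) box, so the result is a genuine reverse set-valued tableau $T$ of shape $\alpha^+$ (here I use that the shape of $t$ is $\overline{M(\alpha,L_1)}^+$, and prepending a column of size $|L_1| = |\supp(\alpha)|$ yields exactly $\alpha^+$). Then I must verify $K_-(T) \leqslant \key(\alpha)$. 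Column 1 of $K_-(T)$ is $L_1 \leqslant \supp(\alpha)$ by (2). For columns $c > 1$: write $K_-(t) = \key(\gamma)$, so by (4) $\gamma \leqslant \overline{M(\alpha, L_1)}$, hence $m(\gamma, L_1)$ exists, and by Lemma~\ref{L: Left key and m} applied to $T$ we get $\wt(K_-(T)) = \mathbbm{1}_{L_1} + m(\gamma, L_1)$. Now Lemma~\ref{L: m and M} converts $\gamma \leqslant \overline{M(\alpha, L_1)}$ (equivalently $\overline{M(\alpha,L_1)} \geqslant \gamma$, with $M(\alpha, L_1)$ existing by Lemma~\ref{L: m and M} since $|L_1| = |\supp(\alpha)|$) into $\alpha \geqslant \mathbbm{1}_{L_1} + m(\gamma, L_1) = \wt(K_-(T))$, which is precisely $K_-(T) \leqslant \key(\alpha)$, i.e. $T \in \RSVT(\alpha)$.

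The main obstacle, I expect, is condition (3) and the precise interplay between the ``recovery'' rule for extra numbers and the left-key computation. In Theorem~\ref{T: KKD image} the analogous condition (3) came out cleanly from tracking Kohnert moves; here there is no move-by-move induction available on the $\RSVT$ side, so I must instead characterize directly, in terms of $L_1$ and $L_1'$ (the first-column leading numbers of $t$), exactly which placements of extra numbers in column 1 are admissible, and show this matches the inequality in (3). This requires understanding that $L_1'$ is precisely the multiset of ``ceilings'' available to extra numbers of column 1 — equivalently, that the legal positions for an extra number $e$ are the boxes strictly below a leading number exceeding $e$, and counting these via $|(e,n]\cap L_1|$ versus $|(e,n]\cap L_1'|$. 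Once this combinatorial dictionary is set up, both directions of the equivalence for (3) should follow, and the rest of the proof is the chain of already-established lemmas (Lemma~\ref{L: Left key and m}, Lemma~\ref{L: m and M}, Corollaries~\ref{C: m exists} and~\ref{C: M exists}) applied as above.
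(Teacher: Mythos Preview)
Your approach matches the paper's: both directions use Lemma~\ref{L: Left key and m} and Lemma~\ref{L: m and M} to convert between $K_-(T)\leqslant\key(\alpha)$ and condition (4), handle (2) by reading off column~1 of $K_-(T)$, and handle (3) by counting leading numbers strictly above $e$ in columns 1 and 2 of the tableau. One point your reverse-direction outline glosses over: when you prepend the new first column to $t$ and assert ``the result is a genuine reverse set-valued tableau,'' condition (3) only guarantees that each \emph{extra} number $e$ in column~1 is weakly larger than the leading number to its right; you also need the $j$-th largest element of $L_1$ to be at least the $j$-th largest element of $L_1'$ so that the \emph{leading} numbers are row-compatible, and this does not follow from (3) but from (4), via $L_1'=\supp(\gamma)$ and the existence of $m(\gamma,L_1)$ (Lemma~\ref{L: m and M} and Corollary~\ref{C: m exists}). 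The paper makes this check explicit; once you add it, your argument and the paper's coincide.
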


\begin{proof}
First, we show that if 
$T = (L_1, E_1, t) \in \RSVT(\alpha)$,
then the four conditions are satisfied.
\begin{enumerate}
\item $L_1$ and $E_1$ are clearly disjoint.
\item Notice that column 1 of $K_-(T)$ 
consists of numbers in $L_1$,
while column 1 of $\key(\alpha)$ 
consists of numbers in $\supp(\alpha)$.
By $K_-(T) \leqslant \key(\alpha)$,
$L_1 \leqslant \supp(\alpha)$.

\item View $T$ as a tableau.
For $e \in E_1$, assume it is on row $j$ column $1$
of $T$.
Then $|L_1 \cap (e, n]| = j$.
The set $L_1'$ consists of
leading numbers in column 2 of $T$.
The $j^{th}$ largest number in $L_1'$,
if exists,
is the leading number 
at row $j$ column 2 of $T$.
Thus, it is weakly less than $e$,
so $|(e, n] \cap L_1'| < j$.

\item
By $T \in \RSVT(\alpha)$,
$K_-(T) \leqslant \key(\alpha)$.
Let $\gamma = \wt(K_-(t))$.
By Lemma~\ref{L: Left key and m},
$\mathbbm{1}_{L_1} + m(\gamma, L_1) \leqslant \alpha$.
By Lemma~\ref{L: m and M},
$$
\wt(K_-(t)) = \gamma
\leqslant \overline{M(\alpha, L_1)}.$$
Thus $t \in \RSVT(\overline{M(\alpha, L_1)})$. 
\end{enumerate} 

Now, we check if we have $T = (L_1,E_1, t)$ 
satisfying these four conditions,
then $T \in \RSVT(\alpha)$.
We may construct $T$ as a tableau.
First, we build column 1 of $T$.
We arrange numbers in $L_1$ into a 
strictly decreasing column.
For each $e \in E_1$,
by condition 2, $|(e, n] \cap L_1| > 0$.
Then we put $e$ on row $|(e, n] \cap L_1|$.
Clearly, this column is an RSVT with 
leading numbers from
$L_1$ and extra numbers from $E_1$.

Next, we may build the tableau 
corresponding to $t$ recursively.
Let $L_1'$ be the set of leading numbers 
in column 1 of $t$.
Put the column we just constructed 
on the left of $t$.
Only need to check all numbers in row $j$
of our new column are weakly larger than
the leading number in row $j$ column 1 of $t$,
which is the $j^{th}$ largest number in $L_1'$.
\begin{itemize}
\item Let $e$ be an extra number 
on row $j$ of our new column, 
By condition 3,
$|[e, n] \cap L_1'| < j$.
Thus, the $j^{th}$ largest number in $L_1'$
is at most $e$.
\item By condition 4, if we let $\gamma = \wt(K_-(t))$,
then $\gamma \leqslant \overline{M(\alpha, L_1)}$.
By Lemma~\ref{L: m and M}, 
$m(\gamma, L_1)$ exists. 
Then by Corollary~\ref{C: m exists},
the $j^{th}$ largest number 
in $\supp(\gamma) = L_1'$
is weakly less than the $j^{th}$ largest number
in $L_1$, which is the leading number in row
$j$ of our new column.
\end{itemize}

Now we have constructed a tableau $T$ which if viewed 
as a diagram pair,
corresponds to $(L_1, E_1, t)$.
It remains to check $K_-(T) \leqslant key(\alpha)$.
Notice that $K_-(T) = \key(\mathbbm{1}_{L_1}
+ m(\gamma, L_1))$.
By Lemma~\ref{L: m and M} and $\gamma \leqslant \overline{M(\alpha, L_1)}$,
we have $K_-(T) \leqslant key(\alpha)$.
Thus, $(L_1, E_1, t) \in \RSVT(\alpha)$.
\end{proof}

The recursive descriptions of $\KKD(\alpha)$
and $\RSVT(\alpha)$ share many similarities.
They only differ at the third condition,
which is the condition on the positions
of ghost cells. 
From this observation,
we get the following result which was discussed in Remark~\ref{natual_map}.

\begin{cor}
We have $\RSSYT(\alpha) = \KD(\alpha)$.
\end{cor}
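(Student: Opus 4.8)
The plan is to deduce $\RSSYT(\alpha) = \KD(\alpha)$ by comparing the two recursive characterizations just established, namely Theorem~\ref{T: KKD image} and Theorem~\ref{T: RSVT image}, after specializing both to the case of no ghost/extra cells. First I would recall the conventions: $\RSSYT(\alpha)$ is identified with the set of diagram pairs $(L, \emptyset)$ lying in $\RSVT(\alpha)$, and $\KD(\alpha)$ is identified with the set of diagram pairs $(K, \emptyset)$ lying in $\KKD(\alpha)$. So it suffices to show that a diagram pair $D = (K, \emptyset)$ with no ghost cells lies in $\KKD(\alpha)$ if and only if it lies in $\RSVT(\alpha)$.

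Next I would run the recursion on the first column. Write $D = (K_1, \emptyset, d)$ where, because $D$ has no ghost cells, the middle component is automatically empty: $G_1 = E_1 = \emptyset$. Then Theorem~\ref{T: KKD image} says $D \in \KKD(\alpha)$ iff (1) $K_1 \subseteq [n]$, (2) $K_1 \leqslant \supp(\alpha)$, (3) is vacuous since $G_1 = \emptyset$, and (4) $d \in \KKD(\overline{M(\alpha, K_1)})$. Symmetrically, Theorem~\ref{T: RSVT image} says $D \in \RSVT(\alpha)$ iff (1) $L_1 \subseteq [n]$, (2) $L_1 \leqslant \supp(\alpha)$, (3) is vacuous since $E_1 = \emptyset$, and (4) $d \in \RSVT(\overline{M(\alpha, L_1)})$. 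Since $K_1 = L_1$ (both equal $\{r : (1,r) \in K\}$), conditions (1) and (2) coincide exactly, and conditions (3) are both empty. The only remaining point is that the recursive condition (4) is the same statement for both sets.

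I would close the argument by induction on $\max(\alpha)$ (equivalently on the number of columns, i.e. on $|\alpha^+|$ via $\max$). The base case $\max(\alpha) = 0$ is immediate since both sets consist only of the empty diagram pair. For the inductive step, note that $\overline{M(\alpha, K_1)}$ has strictly smaller maximum entry than $\alpha$ (it drops each positive part by one), so by the inductive hypothesis $\KKD(\overline{M(\alpha, K_1)}) = \RSSYT(\overline{M(\alpha, K_1)})$ as sets of diagram pairs; hence condition (4) for $\KKD(\alpha)$ and condition (4) for $\RSVT(\alpha)$ are literally the same condition on $d$. Combining, $D \in \KKD(\alpha) \iff D \in \RSVT(\alpha)$, which is exactly $\KD(\alpha) = \RSSYT(\alpha)$.

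The main thing to be careful about — though it is not really an obstacle given the setup — is the bookkeeping that when $D$ has no ghost cells, $d$ also has no ghost cells, so the inductive hypothesis applies to $d$ directly; and that the operator $\alpha \mapsto \overline{M(\alpha, K_1)}$ genuinely decreases $\max$, so the induction is well-founded. One should also note that $M(\alpha, K_1)$ exists precisely when condition (2) holds with the appropriate size constraint, but since we are only asserting the biconditional this is handled uniformly on both sides. No new lemmas are needed beyond Theorems~\ref{T: KKD image} and~\ref{T: RSVT image}.
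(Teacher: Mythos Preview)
Your proposal is correct and follows essentially the same approach as the paper: specialize Theorems~\ref{T: KKD image} and~\ref{T: RSVT image} to the ghost-free case, observe that conditions (1), (2) coincide and (3) is vacuous on both sides, and finish by induction on $\max(\alpha)$ so that condition~(4) matches. One small slip: you write ``$\KKD(\overline{M(\alpha,K_1)}) = \RSSYT(\overline{M(\alpha,K_1)})$'' as the inductive hypothesis, but what you actually use (and what you correctly justify via ``$d$ also has no ghost cells'') is $\KD(\overline{M(\alpha,K_1)}) = \RSSYT(\overline{M(\alpha,K_1)})$.
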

\begin{proof}
We know $\KD(\alpha)$ is the subset of $\KKD(\alpha)$ 
containing all diagram pairs with no ghost cells. 
By Theorem~\ref{T: KKD image},
$\KD(\alpha)$ consists of $(K_1, \emptyset, d)$
such that 
\begin{itemize}
\item $K_1 \leqslant \supp(\alpha)$.
\item $d \in \KD(\overline{M(\alpha, K_1)})$.
\end{itemize}

On the other hand, 
$\RSSYT(\alpha)$ is the subset of $\RSVT(\alpha)$ 
containing all diagram pairs with no ghost cells. 
By Theorem~\ref{T: RSVT image},
$\RSSYT(\alpha)$ consists of $(L_1, \emptyset, t)$
such that 
\begin{itemize}
\item $L_1 \leqslant \supp(\alpha)$.
\item $t \in \RSSYT(\overline{M(\alpha, L_1)})$.
\end{itemize}

An induction on $\max(\alpha)$ yields
$\KKD(\alpha) = \RSSYT(\alpha)$.
\end{proof}

This recursive description 
of $\RSVT(\alpha)$ leads to
the following lemma. 

\begin{lemma}
\label{L: RSVT in another set}
Let $T = (L, E) = (L_1, E_1, t)$
be an element in $\RSVT(\alpha)$.
If $(L, \emptyset) \in KD(\gamma)$
for another weak composition $\gamma$,
then $T \in \RSVT(\gamma)$.
\end{lemma}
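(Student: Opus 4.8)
The plan is to prove Lemma~\ref{L: RSVT in another set} by induction on $\max(\alpha)$ (equivalently on the number of columns of the shape), using the recursive description of $\RSVT(\cdot)$ from Theorem~\ref{T: RSVT image}. Write $T = (L_1, E_1, t)$. Since $(L, \emptyset) \in \KD(\gamma) = \RSSYT(\gamma)$, the corresponding $\RSSYT$ lies in $\RSVT(\gamma)$, so by Theorem~\ref{T: RSVT image} applied to $(L_1, \emptyset, t_0)$ (where $t_0$ is the leading-number part of $t$) we get $L_1 \leqslant \supp(\gamma)$ and $t_0 \in \RSSYT(\overline{M(\gamma, L_1)})$; in particular the key-diagram-pair/$\RSSYT$ identification tells us $L_1$ is exactly column $1$ of $(L,\emptyset)$ viewed in $\KD(\gamma)$. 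My first step is therefore to verify conditions (1)--(4) of Theorem~\ref{T: RSVT image} for the triple $(L_1, E_1, t)$ relative to $\gamma$.

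Conditions (1) and (3) are immediate: they only involve $L_1$, $E_1$, and the row indices $L_1'$ of Kohnert cells in column $1$ of $t$, none of which change when we replace $\alpha$ by $\gamma$, and they already hold because $T \in \RSVT(\alpha)$. Condition (2), namely $L_1 \leqslant \supp(\gamma)$, follows from $(L,\emptyset) \in \KD(\gamma)$ as noted above (column $1$ of an element of $\KD(\gamma)$ has row set $\leqslant \supp(\gamma)$ by Theorem~\ref{T: KKD image} condition (2), or directly from Corollary~\ref{C: KD RSVT order} together with Proposition~\ref{left_swap_inclusion}). So the crux is condition (4): I must show $t \in \RSVT(\overline{M(\gamma, L_1)})$.

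For condition (4), the key observation is that the ambient Kohnert structure of $t$ is itself controlled by a $\KD$ membership. Concretely, if $(L, \emptyset) \in \KD(\gamma)$ and we apply the recursive decomposition $D = (K_1, G_1, d)$ from Section~\ref{section.describe KKD RSVT} to $(L,\emptyset)$, then $K_1 = L_1$, $G_1 = \emptyset$, and the shifted diagram $d = (L, \emptyset)$-with-first-column-removed lies in $\KD(\overline{M(\gamma, L_1)})$ by Theorem~\ref{T: KKD image}. But that shifted diagram is exactly $(L', \emptyset)$ where $L'$ is the leading-number diagram of $t$; that is, $(L', \emptyset) \in \KD(\overline{M(\gamma, L_1)})$. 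On the other hand, from $T \in \RSVT(\alpha)$ and Theorem~\ref{T: RSVT image} condition (4) we already have $t \in \RSVT(\overline{M(\alpha, L_1)})$. Now I apply the inductive hypothesis to $t$: it is an element of $\RSVT(\overline{M(\alpha, L_1)})$ whose leading-number diagram $(L', \emptyset)$ lies in $\KD(\overline{M(\gamma, L_1)})$, and $\max(\overline{M(\alpha,L_1)}) < \max(\alpha)$, so induction gives $t \in \RSVT(\overline{M(\gamma, L_1)})$, which is precisely condition (4). Then Theorem~\ref{T: RSVT image} assembles $(L_1, E_1, t) \in \RSVT(\gamma)$, i.e.\ $T \in \RSVT(\gamma)$.

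The main obstacle I anticipate is the bookkeeping that makes ``the shifted part of $(L,\emptyset) \in \KD(\gamma)$ is $(L',\emptyset) \in \KD(\overline{M(\gamma,L_1)})$'' precise and correctly matched with ``the leading-number diagram of $t$''. This requires carefully invoking the identification $\KD = \RSSYT$ (the Corollary just proved) so that the recursive $(K_1,G_1,d)$-decomposition of a diagram pair is compatible with the tableau-side decomposition $T \mapsto (L_1, E_1, t)$ on leading numbers; once that compatibility is stated cleanly, the rest is a routine application of Theorem~\ref{T: RSVT image}, Theorem~\ref{T: KKD image}, and the inductive hypothesis. The base case $\max(\alpha) = 0$ is trivial since then all tableaux involved are empty.
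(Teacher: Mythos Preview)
Your proposal is correct and follows essentially the same approach as the paper's proof: induct on the number of columns, verify conditions (1)--(4) of Theorem~\ref{T: RSVT image} for $(L_1, E_1, t)$ relative to $\gamma$, with conditions (1) and (3) inherited from $T \in \RSVT(\alpha)$, condition (2) from $(L,\emptyset) \in \KD(\gamma)$, and condition (4) by the inductive hypothesis applied to $t$ once you observe that the shifted leading-number diagram lies in $\KD(\overline{M(\gamma, L_1)})$. The paper phrases the induction on $\max(\gamma)$ rather than $\max(\alpha)$, but since $\alpha^+ = \gamma^+$ these coincide; otherwise your argument and the paper's are the same (the paper also records, as an alternative, the direct tableau argument via $K_-(T) = K_-(\LL(T))$).
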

\begin{proof}
Prove by induction on $\max(\gamma)$.
Notice that $t$ is in
$\RSVT(\overline{M(\alpha, K_1)})$.
If we ignore ghost cells of $t$,
it is in $\KD(\overline{M(\gamma, K_1)})$.
By our inductive hypothesis, 
$t \in \RSVT(\overline{M(\gamma, K_1)})$.

Now we check $T = (L_1, E_1, t)$ 
satisfies the four
conditions of $\RSVT(\gamma)$.
Condition 1 and 3 are implied by 
$T \in \RSVT(\alpha)$.
Condition 2 follows from $(L, \emptyset)
\in \KD(\gamma)$.
The last condition is checked in the 
previous paragraph.
\end{proof}

\begin{rem}
Alternatively, we may prove 
Lemma~\ref{L: RSVT in another set}
while viewing $T$ as a tableau.
Let $T'$ be the tableau
we get after keeping the smallest
number in each cell of $T$.
By $(L, \emptyset) \in \KD(\gamma)$,
we know $T' \in \RSSYT(\gamma)$.
Consequently, 
$K_-(T) = K_-(T') \leqslant \gamma$,
so $T \in \RSVT(\gamma)$.
\end{rem}

\section{Two operators on Kohnert diagrams}
\label{section.flat.sharp}
In order to prove the well-definedness and bijectivity of $\Psi_\alpha$ and $\Phi_\alpha$ 
defined in Section~\ref{section.maps}, we introduce two auxiliary operators $\sharp_g$ and $\flat_e$ on $\KD(\alpha)$ and study their properties. Later in Section~\ref{section.recursive}, we will use these two operators to give alternative
descriptions of $\Psi_\alpha$ and $\Phi_\alpha$.

\subsection{Introducing the $\sharp_g$ operator}
We define an operator $\sharp_g$ on $\KD(\alpha)$
for each $g \in [n]$.
\begin{defn}
For each $g \in [n]$,
define $\sharp_g: \KD(\alpha) \rightarrow 
\KD(\alpha) \times [n]$.
Take $D = (K_1, \emptyset, d) \in \KD(\alpha)$.
Find the largest $k \in K_1 \cap [1, g]$ such that
$D' = (K_1 - \{k\} \sqcup \{g\}, \emptyset, d)$ is still 
in $\KD(\alpha)$.
If such $k$ exists, 
then $\sharp_g(D) := (D', k)$.
Otherwise, $\sharp_g(D)$ is undefined.
\end{defn}

We would like to determine when $\sharp_g(D)$
is well-defined.
This is partially answered by the following lemma:
\begin{lemma}
\label{L: sharp well-defined0}
Assume $D = (K_1, \emptyset, d) \in \KD(\alpha)$.
If $\sharp_g(D)$ is defined,
then $|[g, n] \cap \supp(\alpha)| 
> |(g, n] \cap K_1|$.
\end{lemma}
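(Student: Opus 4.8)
The plan is to unpack the definition of $\sharp_g(D)$ being defined and translate it into a statement about key diagram pairs via the results of Section~\ref{section.Bruhat}. First I would note that $D = (K_1, \emptyset, d) \in \KD(\alpha)$ means, by the recursive description in Theorem~\ref{T: KKD image} (specialized to the no-ghost-cell case), that $K_1 \leqslant \supp(\alpha)$ and $d \in \KD(\overline{M(\alpha, K_1)})$. If $\sharp_g(D)$ is defined, there is a largest $k \in K_1 \cap [1,g]$ such that $D' = (K_1 - \{k\} \sqcup \{g\}, \emptyset, d) \in \KD(\alpha)$; write $K_1' := (K_1 - \{k\}) \sqcup \{g\}$. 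Applying Theorem~\ref{T: KKD image} to $D'$ gives $K_1' \leqslant \supp(\alpha)$.

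Next I would extract the numerical inequality from $K_1' \leqslant \supp(\alpha)$ using the characterization in Lemma~\ref{lem:subset_order}(3): for every $s \in K_1'$ we have $|[s,n] \cap K_1'| \leqslant |[s,n] \cap \supp(\alpha)|$. The key observation is that $g \in K_1'$ and $g \leqslant n$, so taking $s = g$ yields $|[g,n] \cap K_1'| \leqslant |[g,n] \cap \supp(\alpha)|$. It remains to compare $|[g,n] \cap K_1'|$ with $|(g,n] \cap K_1|$. Since $k \in K_1 \cap [1,g]$, either $k = g$ or $k < g$; but $k \in K_1$ and $g \notin K_1$ — one should check $g \notin K_1$ holds, which follows because $K_1$ and $K_1 - \{k\} \sqcup \{g\}$ are meant to be genuinely different (if $g \in K_1$ already then $K_1' = K_1 - \{k\}$ would have smaller size, violating $\KD(\alpha)$-membership which requires the column-$1$ size to equal $|\{i : \alpha_i \geqslant 1\}|$, the number of nonzero parts). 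So $k < g$ or $k = g$; in either case $k \notin (g,n]$, hence $[g,n] \cap K_1' = (\{g\} \sqcup ([g,n] \cap K_1)) \setminus \{k\}$ and since $k \leqslant g$ we get $[g,n] \cap K_1' = \{g\} \sqcup ((g,n] \cap K_1)$, a disjoint union. Therefore $|[g,n] \cap K_1'| = 1 + |(g,n] \cap K_1|$, and combining with the displayed inequality gives $|[g,n] \cap \supp(\alpha)| \geqslant 1 + |(g,n] \cap K_1| > |(g,n] \cap K_1|$, as desired.

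The main obstacle I anticipate is the careful bookkeeping around whether $g \in K_1$ and whether $g = k$ is possible — the operator picks $k$ from $K_1 \cap [1,g]$, so $k = g$ is allowed only when $g \in K_1$, and one must confirm that in the relevant regime $g \notin K_1$ (otherwise $D' = D$ and the whole operation is trivial or the size constraint is violated). Once that case analysis is pinned down, the counting identity $|[g,n] \cap K_1'| = 1 + |(g,n] \cap K_1|$ is purely mechanical, and the inequality falls out of Lemma~\ref{lem:subset_order}. An alternative, perhaps cleaner, route would be to invoke Lemma~\ref{lem:subset_order}(2): $K_1' \leqslant \supp(\alpha)$ means the $j$-th largest element of $K_1'$ is at most the $j$-th largest element of $\supp(\alpha)$; letting $j$ be the rank of $g$ in $K_1'$ (so that $g$ is the $j$-th largest and $j = |[g,n] \cap K_1'|$), the $j$-th largest element of $\supp(\alpha)$ is $\geqslant g$, forcing $|[g,n] \cap \supp(\alpha)| \geqslant j = |[g,n] \cap K_1'| = 1 + |(g,n] \cap K_1|$. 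I would likely present whichever of these two phrasings is shorter given the notation already established.
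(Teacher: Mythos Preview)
Your approach is correct and constitutes a genuine alternative to the paper's argument. The paper proves this lemma directly via Kohnert tableaux: since $D' \in \KD(\alpha)$, the filling $\Label_\alpha(D')$ exists, and by condition~(2) of Definition~\ref{def: KT} every cell in column~1 at row $\geqslant g$ receives a distinct label from $[g,n] \cap \supp(\alpha)$; counting those cells gives the inequality. You instead route through the recursive description (Theorem~\ref{T: KKD image}) to extract $K_1' \leqslant \supp(\alpha)$, then apply Lemma~\ref{lem:subset_order}. Your route avoids any mention of labelings and shows that the inequality is really a fact about the Bruhat order on column-sets; the paper's route is more self-contained at this point in the exposition (it does not need Section~\ref{section.describe KKD RSVT}) and motivates the sharp algorithm that immediately follows.

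One correction: your digression arguing that $g \notin K_1$ is wrong and should be dropped. When $g \in K_1$, the largest admissible $k \in K_1 \cap [1,g]$ is $k = g$ itself (since then $D' = D \in \KD(\alpha)$ trivially), so $\sharp_g(D) = (D,g)$ and $K_1' = K_1$; nothing about sizes is violated. Fortunately your proof does not actually need $g \notin K_1$: the identity $|[g,n] \cap K_1'| = 1 + |(g,n] \cap K_1|$ follows cleanly in all cases from the two observations $g \in K_1'$ and $(g,n] \cap K_1' = (g,n] \cap K_1$ (the latter because $k \leqslant g$). With that simplification your argument goes through without case analysis.
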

The proof involves Kohnert tableaux
from subsection~\ref{Kohnert Tableaux}.
\begin{proof}
Assume $\sharp_g(D) = (D', k)$.
Then in column 1 of $D'$,
there are $|(g, n] \cap K_1| + 1$
cells weakly above row $g$.
In $\Label_\alpha(D')$,
these cells are filled by distinct 
number in $[g, n] \cap \supp(\alpha)$.
Thus, 
$|[g, n] \cap \supp(\alpha)| 
> |(g, n] \cap K_1|$.
\end{proof}

Next,
we will show the converse of this lemma.
First, we introduce an algorithm called 
\definition{sharp algorithm}.
Its input would be a number $g$ and 
$D = (K_1, \emptyset, d) \in \KD(\alpha)$
such that $|[g, n] \cap \supp(\alpha)| 
> |(g, n] \cap K_1|$.
It will output a diagram pair $D'$
with only Kohnert cells. 
It will also output a filling of $D'$.
Later, we will check the filling is a
Kohnert tableau with content $\alpha$,
which implies $D' \in \KD(\alpha)$.
Finally, we will check $D'$ is 
the first component of $\sharp_g(D)$.

The sharp algorithm consists of five steps:
\begin{itemize}
\item Step 1: Compute $\Label_\alpha(D)$.
\item Step 2: Since $|[g, n] \cap \supp(\alpha)| > |(g, n] \cap K_1|$, there is a number $m$ such that 
$m \geqslant g$ but $m$ is weakly below row $g$ in column 1. Find the highest such $m$. 
Let $k$ be the row index of this $m$.
\item Step 3: Let $D' = ((K_1 - \{k\})\sqcup \{g\}, \emptyset, d)$.
This is the first output.
\item Step 4: To compute the filling, 
we start from $\Label_\alpha(D)$ 
and move the $m$ from $(1, k)$ to $(1,g)$.
The resulting filling satisfies the
first three conditions from Definition~\ref{def: KT}. 
\item Step 5: If there is an $u < m$ such that $u, m$ 
violates condition four in column 1,
we find the smallest such $u$ and swap it
with $m$.
Repeat this step until 
no such $u$ exists.
The final filling will be the second output. 
\end{itemize}

\begin{exa} \label{eg_sharp} Consider $\alpha = (0,0,0,2,2,1,1)$, and $D\in \KD(\alpha)$ as shown below. Let $g = 3$. The sharp algorithm gives $m = 6$ and $k = 1$. 
The output $D'$ is obtained by moving the Kohnert cell $(1, k)$
to $(1, g)$ in $D$.
To obtain the filling, we need to first move $m$ to row $g$ in $\Label_\alpha(D)$. 
Next, swap $m$ with $4$ and then $5$. 
\[
D = \raisebox{2cm}{
\begin{ytableau}
\cdot \\
\none \\
\cdot\\
\cdot \\
\none & \cdot \\
\none & \cdot \\
\cdot 
\end{ytableau}}\,,  \quad
\Label_\alpha(D) = \raisebox{2cm}{
\begin{ytableau}
7 \\
\none \\
5\\
4 \\
\none & 5 \\
\none & 4 \\
6 
\end{ytableau}}\,\xrightarrow[]{}
\raisebox{2cm}{
\begin{ytableau}
7 \\
\none \\
5\\
4 \\
6 & 5 \\
\none & 4 \\
\none 
\end{ytableau}}\,\xrightarrow[]{}
\raisebox{2cm}{
\begin{ytableau}
7 \\
\none \\
5\\
6 \\
4 & 5 \\
\none & 4 \\
\none 
\end{ytableau}}\,\xrightarrow[]{}
\raisebox{2cm}{
\begin{ytableau}
7\\
\none \\
6\\
5 \\
4 & 5 \\
\none & 4 \\
\none
\end{ytableau}}\,, \quad D' = 
\raisebox{2cm}{
\begin{ytableau}
\cdot \\
\none \\
\cdot\\
\cdot \\
\cdot & \cdot \\
\none & \cdot \\
\none 
\end{ytableau}}\,.
\]
\end{exa}
\begin{lemma}
The filling produced by the sharp algorithm
is a Kohnert tableau with content $\alpha$.
Consequently, 
the filling is $\Label_\alpha(D')$
and $D' \in \KD(\alpha)$.
\end{lemma}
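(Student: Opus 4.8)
The plan is to verify the four conditions of Definition~\ref{def: KT} for the filling $F$ output by the sharp algorithm, working from the known fact that $\Label_\alpha(D)$ is a Kohnert tableau with content $\alpha$. The content is preserved throughout: Step 4 merely relocates the entry $m$ within column~1, and the swaps in Step 5 permute entries within column~1, so column $c$ of $F$ still consists of $\{i : \alpha_i \geqslant c\}$ with each number once. Hence condition (1) holds automatically, and we only need (2), (3), (4). Condition (4) in column~1 is the termination criterion of Step 5, so the real content of the argument is that moving $m$ up to row $g$ and then performing the Step~5 swaps does not break conditions (2) and (3), nor condition (4) in columns $\geqslant 2$ or the cross-column interaction with column~2.

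First I would pin down what Step~4 does. In $\Label_\alpha(D)$, by the choice of $m$ in Step~2 as the highest entry $\geqslant g$ sitting weakly below row $g$ in column~1, every entry strictly above row $g$ in column~1 is $< g$ (otherwise it would be a higher candidate), so inserting $m$ at row $g$ keeps the column strictly decreasing from top to bottom once we also note $m \geqslant g > (\text{entry just above row } g)$ and $m > (\text{entries below, which were below } m\text{ originally})$ — here I would use that $D' \in$ (the diagram obtained) has a genuine cell at $(1,g)$ because $|[g,n]\cap\supp(\alpha)| > |(g,n]\cap K_1|$ guarantees row $g$ was empty in column~1 of $D$, so $k > g$ is impossible to... more precisely $k$ is the row of $m$, which lies weakly below $g$, i.e. $k \leqslant g$, and if $k=g$ the move is trivial. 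For condition (2) ($i$ in row $r \Rightarrow i \geqslant r$): the only entry that changes row is $m$, moving from row $k \leqslant g$ to row $g$, and $m \geqslant g$ by construction, so (2) is preserved; the Step~5 swaps exchange two column-1 entries $u < m$ where $m$ is the higher one, and after the swap $m$ moves down and $u$ moves up — I need $u \geqslant (\text{new row of } u)$, which follows because the violation being repaired is exactly ``$u < m$, $u$ below $m$ in column~1, and no $u$ in column~2 strictly above the... '' wait, condition~4 is about column~2 witnessing; let me restate: Step~5 repairs condition~(4) \emph{within the pair in column 1}, and the swap only moves $u$ to a row it already dominated (since in the pre-swap tableau $u$ sat below $m$, hence at a row $\leqslant$ its value minus the gap). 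I would make this precise by induction on the swaps: maintain the invariant that conditions (1),(2),(3) hold and the only possible (4)-violation is the single pair involving the current position of $m$ in column~1.

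Next I would handle conditions (3) and (4) across columns. For (3) ($i$ at $(c,r)$ and $(c+1,r')$ forces $r \geqslant r'$), the only column affected is column~1 versus column~2. Here I would argue that raising $m$ to row $g$ is safe because $D' = ((K_1 - \{k\})\sqcup\{g\}, \emptyset, d)$ is asserted (in Step~3) to be a diagram, and the key point is that in $\Label_\alpha(D)$ the entry $m$ cannot equal the column-2 entry in a row between $k$ and $g$ in a way that creates an inversion — this should follow from the labeling algorithm's defining property (``smallest $i \in S_c$ not appearing at $(c+1,r')$ for $r' > r$''), which forces: if $m$ appears in column~2, it does so at a row $\geqslant$ the row it will occupy. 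Similarly the Step~5 swaps only move entries downward (for $m$) or upward (for $u$) within column~1, and for $u$ moving up one needs that $u$ does not appear in column~2 at a row strictly below $u$'s new row; since $u < m$ and the labeling of column~1 placed $u$ below $m$, the column-2 constraints for $u$ were already tighter. For condition (4) in columns $\geqslant 2$ nothing changes, so it is inherited from $\Label_\alpha(D)$.

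\textbf{Main obstacle.} The delicate point — and the step I expect to consume most of the work — is proving that the Step~5 ``bubble'' terminates and that at termination \emph{all} of conditions (2)–(4) hold simultaneously in column~1 and across the column~1/column~2 boundary. The subtlety is that each swap fixes one (4)-violation but could a priori create another, or break (2) or (3); the right framing is an invariant maintained by induction on swaps: ``the filling satisfies (1),(2),(3) and condition (4) everywhere except possibly for the unique pair $(m, \text{entry directly below the new position of } m)$.'' Establishing that this invariant is preserved, and that the value $m$ strictly decreases in row-position (hence the process stops), and then that the exit condition is exactly ``(4) holds for that last pair too,'' is the heart of the lemma. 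Once that invariant argument is in place, the conclusion ``$F = \Label_\alpha(D')$'' follows from uniqueness of the Kohnert labeling, and $D' \in \KD(\alpha)$ follows from \cite[Lemma 2.4]{A} as cited after Definition~\ref{def: KT}.
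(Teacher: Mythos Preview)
Your overall strategy matches the paper's: maintain an invariant through Step~4 and each Step~5 swap, namely that (1)--(3) hold and every condition-(4) violation lives in column~1 with $m$ as the larger entry. However, several concrete steps in your execution are wrong.

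\textbf{Swap direction.} A condition-(4) violation with $u<m$ means the larger value $m$ sits in the \emph{lower} row (that is the hypothesis ``$j>i$ and $j$ is lower than $i$''). So each Step~5 swap moves $m$ \emph{up} and $u$ \emph{down}, the opposite of what you wrote. Consequently your checks are aimed at the wrong entries: condition~(2) needs to be verified for $m$ at its new higher row (it holds because $u$ satisfied (2) there and $m>u$), and condition~(3) needs to be verified for $u$ at its new lower row (it holds precisely because the absence of a $u$ in column~2 above $m$'s old row is exactly the content of the violation being repaired). Your termination argument ``$m$ strictly decreases in row-position'' is likewise reversed; termination comes from $m$ moving up, bounded by row~$m$.

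\textbf{The invariant is too tight.} After a swap there may be several condition-(4) violations in column~1, not a ``unique pair'' with the entry ``directly below $m$''. The paper's invariant is only that every such violation has $j=m$; the algorithm then picks the \emph{smallest} such $u$, and that minimality is used to rule out new violations with $u$ playing either role after the swap.

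\textbf{Missing case analysis after Step 4.} You do not argue why, after moving $m$ to row~$g$, a violation cannot have $m$ as the smaller entry~$i$. The paper handles this by a short dichotomy: if some $j>m$ below row~$g$ created a violation with $i=m$, then either $j$ was already below $m$'s old row~$k$ (contradicting that $\Label_\alpha(D)$ satisfied (4)) or $j$ lay between rows $k$ and $g$ (contradicting the maximality in the choice of $m$ at Step~2). This is the place where the ``highest such $m$'' clause actually does work, not the claim you made that entries above row~$g$ are $<g$ (which is false: by condition~(2) they are all $\geqslant$ their row $>g$).

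Once the direction is fixed and the invariant is stated correctly, the remaining verifications go through exactly as in the paper, and the final identification $F=\Label_\alpha(D')$ follows from uniqueness of the labeling.
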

\begin{proof}
We claim that after Step 4 and after each  
iteration of Step 5, 
the filling satisfies the
first three conditions of Definition~\ref{def: KT}. 
Moreover, 
if $i< j$ violates the last condition
in column $c$, then $j = m$ and $c = 1$.

After Step 4,
the filling clearly satisfies
the first three conditions. 
Now assume $i<j$ violates 
the last condition in column $c$.
Clearly, $c = 1$ 
and $m$ is $i$ or $j$.
Assume $m$ is $i$, then $j$ is below row $g$ in column 1.
If $j$ were below row $k$,
then $m, j$ would have violated condition 4 
before this move.
On the other hand, if $j$ were above row $k$,
then we would have picked $j$ instead of $m$.
In either case, we reach a contradiction.
Thus, $j$ must be $m$.

If there is a $u$ such that 
$u< m$ violates condition 4,
we pick the smallest such $u$.
Assume our claim holds now.
We need to show our claim is still
true after we swap $u$ and $m$. 
We check the first three conditions:
\begin{enumerate}
\item Condition 1 clearly holds.
\item Only need to check condition 2 for $m$.
Recall $u < m$.
Since $u$ satisfies condition 2 
before the move, 
so does $m$ after the move.
\item Only need to check condition 3 for $u$.
Since $u, m$ violate condition 4 before this swap, and that $u$ satisfies condition 3 before moving $m$ to $(1,g)$,
there is no $u$ in column 2
strictly above the $m$ in column 1. 
Thus, after the move, 
$u$ satisfies condition 3.

\end{enumerate}
Now assume $i< j$ violates 
condition 4 in column $c$.
Clearly, $c = 1$
and one of $i, j$ is $u$ or $m$.
We just need to check $u$ cannot be $i$ 
or $j$ and $m$ cannot be $i$:
\begin{enumerate}
\item[$\bullet$] Assume $i = u$.
Then $u, j$ would have violated condition 4
before the move, 
contradicting to our claim.
\item[$\bullet$] Assume $j = u$.
Then $i < u < m$.
Before the move, 
$i, m$ violates condition 4.
Then we would have picked $i$ and swapped it with $m$, rather than $u$.
Contradiction. 

\item[$\bullet$] Assume $i = m$.
If $j$ were below $m$ before the move,
then $m, j$ would have violated condition 4 before the move.
Now assume $j$ were between $u$ and $m$ before the move.
Then $u, j$ would have violated condition 4
before the move.
\end{enumerate}

Now our claim holds
after each move. 
When the sharp algorithm terminates,
there is no violation of condition 4
with the form $i, m$.
Thus, the filling satisfies condition 4,
so it is in $\KT(\alpha)$.
\end{proof}
 
\begin{lemma}
The $D'$ yielded by the sharp algorithm 
is the first component of $\sharp_g(D)$.
\end{lemma}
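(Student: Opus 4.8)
The plan is to show that the pair $(D', k)$ produced by the sharp algorithm matches the definition of $\sharp_g(D)$, i.e.\ that $k$ from Step~2 is the \emph{largest} element of $K_1 \cap [1,g]$ for which raising row $k$ to row $g$ in column~1 keeps the diagram in $\KD(\alpha)$. Two things must be verified: (a) the value $k$ chosen by the algorithm is a \emph{valid} choice (namely $D' = ((K_1-\{k\})\sqcup\{g\},\emptyset,d) \in \KD(\alpha)$), and (b) no larger valid choice exists. Part (a) is exactly the content of the previous lemma: the sharp algorithm's filling is a Kohnert tableau with content $\alpha$, hence $D' \in \KD(\alpha)$, and moreover this filling equals $\Label_\alpha(D')$.

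For part (b), I would argue by contradiction using the Kohnert-labeling characterization. Suppose some $k' \in K_1 \cap [1,g]$ with $k' > k$ also yields a diagram $D'' = ((K_1-\{k'\})\sqcup\{g\},\emptyset,d) \in \KD(\alpha)$. Then $\Label_\alpha(D'')$ exists. I would compare $\Label_\alpha(D)$, $\Label_\alpha(D'')$, and the algorithm's output: since columns $c \geqslant 2$ of $D$, $D'$, $D''$ all agree (they all have underlying diagram $d$), and the labeling algorithm processes columns right-to-left, the labels in columns $\geqslant 2$ are identical across all three. The labeling of column~1 is then forced by the multiset $\supp(\alpha) \cap (\text{appropriate set})$ together with the ``smallest available'' rule reading bottom-to-top. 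In $D$, the cell at row $k$ in column~1 carries the label $m$ — the highest label $\geqslant g$ lying weakly below row $g$. The key point is that $m$ is the number that \emph{must} move up to occupy row $g$: any cell of $D$ strictly between rows $k+1$ and $g$ in column~1 carries a label $< g$ (otherwise it would have been chosen instead of $m$ as the highest such label), so those labels are too small to legally sit at row $g$ by condition (2) of Definition~\ref{def: KT}. Hence in any valid lift the cell that ascends to row $g$ must be the one originally at row $k$; a lift from a strictly higher row $k' > k$ would leave row $g$ occupied by a forced label $< g$, violating condition~(2). This contradiction establishes maximality.

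I would phrase the core of (b) as a short lemma-internal claim: \emph{among the column-1 cells of $D$ at rows in $[1,g]$, the one at row $k$ is the unique cell whose $\Label_\alpha$ value is $\geqslant g$ and which sits weakly below row $g$}, then observe that legality of the resulting Kohnert tableau (specifically condition~(2): a label $i$ in row $r$ needs $i \geqslant r$) pins down that exactly this cell must be raised. Everything else in the labeling is unchanged by the reasoning of the previous two lemmas, so $\sharp_g(D) = (D', k)$.

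The main obstacle I anticipate is making the ``columns $\geqslant 2$ are labeled identically'' step fully rigorous: one must be careful that removing a cell from row $k$ and adding one at row $g$ in column~1 does not alter which labels the labeling algorithm assigns in column~2, since column~2's labeling depends on column~3 only (not column~1) under the right-to-left sweep — so this is actually automatic, but it deserves an explicit sentence. A secondary subtlety is ensuring that Step~5's cascade of swaps does not change the underlying diagram $D'$ (only the filling), which is immediate from the description but worth noting, and that the final filling genuinely is $\Label_\alpha(D')$ rather than merely \emph{a} Kohnert tableau of shape $D'$ — this follows from uniqueness of the Kohnert tableau associated to a fixed diagram in $\KD(\alpha)$, established in subsection~\ref{Kohnert Tableaux}.
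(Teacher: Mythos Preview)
Your approach matches the paper's, but part (b) has a gap. First, your ``lemma-internal claim'' that row $k$ carries the \emph{unique} column-1 label $\geqslant g$ weakly below row $g$ is false: cells at rows $< k$ can also have labels $\geqslant g$ (Step~2 picks the \emph{highest-positioned} such cell, not the only one). Only the weaker statement---cells at rows strictly between $k$ and $g$ have labels $< g$---is true, and that is what is actually needed.

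More importantly, the assertion ``a lift from row $k' > k$ would leave row $g$ occupied by a forced label $< g$'' is not justified. When the underlying diagram changes from $D$ to $D'' = ((K_1 - \{k'\}) \sqcup \{g\}, \emptyset, d)$, the labeling algorithm re-runs on column~1 and labels can redistribute; you cannot simply carry the old label from row $k'$ up to row $g$. The paper fills this gap with a pigeonhole count: the $p$ labels $s_1, \ldots, s_p$ sitting at rows in $K_1 \cap [k', g-1]$ in $\Label_\alpha(D)$ are all $< g$; running the labeling on $D''$, the cells strictly below row $k'$ fill identically, so $s_1, \ldots, s_p$ all remain in $S_1$ at that stage; but $D''$ has only $p-1$ cells left strictly below row $g$, so some $s_i < g$ survives in $S_1$ when the algorithm reaches row $g$, and condition~(2) must then fail. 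Your ``forced label'' conclusion happens to be correct, but establishing it still requires this count (or, alternatively, the extra observation that the surviving $s_i$ is a valid candidate at $(1,g)$ because condition~(3) of $\Label_\alpha(D)$ forces any column-2 occurrence of $s_i$ to lie at row $< g$). The obstacle you flagged---identical labeling of columns $\geqslant 2$---is, as you note, automatic; the real obstacle is the redistribution of column-1 labels.
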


\begin{proof}
The lemma is trivial if $g \in K_1$.
Thus, we may assume $g \notin K_1$.
We already showed $D' \in \KD(\alpha)$
by constructing $\Label_\alpha(D')$.
Take $r \in K_1$ with $k < r < g$.
It suffices to show $(K_1 - \{r\} \sqcup \{g\},
\emptyset, d) \notin \KD(\alpha)$.

In column 1 of $\Label_\alpha(D)$,
assume $s_1, \dots, s_p$ are
the numbers below row $g$ and 
weakly above row $r$.
By how we picked $m$,
we know $s_1, \dots, s_p < g$.
We may run the labeling algorithm 
on $((K_1 - \{r\}) \sqcup \{g\},
\emptyset, d)$.
It behaves the same as on $D$ 
on cells prior to $(1, r)$.
After filling all these cells before $(1,r)$, 
we know $s_1, \dots, s_p$ are still 
in the set $S_1$.
However, there remains only $p - 1$ 
empty cells below row $g$.
Thus, at least one number of $s_1, \dots, s_p$
will be placed weakly above row $g$.
Since this number is less than $g$,
the labeling algorithm will terminate and produce no output.
Thus, this diagram is not in $\KD(\alpha)$.
\end{proof}

Thus, we know the sharp algorithm 
outputs the first component of $\sharp_g(D)$,
together with its Kohnert Labeling. 
Now we can tell when $\sharp_g(D)$ is well-defined. 
\begin{lemma}
\label{L: sharp well-defined}
Assume $D = (K_1, \emptyset, d) \in \KD(\alpha)$.
Then $\sharp_g(D)$ is well-defined if and only if 
$$|[g, n] \cap \supp(\alpha)| 
> |(g, n] \cap K_1|.$$
\end{lemma}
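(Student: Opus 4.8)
The forward implication is already done: it is exactly Lemma~\ref{L: sharp well-defined0}, which says that whenever $\sharp_g(D)$ is defined we have $|[g,n]\cap\supp(\alpha)| > |(g,n]\cap K_1|$. So the only thing left is the converse, and the plan is to read it off directly from the sharp algorithm together with the two lemmas just established about that algorithm.

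Assume $|[g,n]\cap\supp(\alpha)| > |(g,n]\cap K_1|$. This is precisely the hypothesis under which the sharp algorithm is run, and it is precisely what makes Step~2 succeed: column~$1$ of $\Label_\alpha(D)$ contains $|[g,n]\cap\supp(\alpha)|$ entries that are $\geqslant g$, while only $|(g,n]\cap K_1|$ cells lie strictly above row~$g$; since the former exceeds the latter, at least one entry $m\geqslant g$ must sit weakly below row~$g$, so the highest such $m$ and its row index $k$ exist. Steps~3--5 then run unconditionally, and by the first of the two sharp-algorithm lemmas the resulting filling is a Kohnert tableau with content $\alpha$, hence equals $\Label_\alpha(D')$ and in particular $D'\in\KD(\alpha)$. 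By the second sharp-algorithm lemma, this $D'$ is the first component of $\sharp_g(D)$; in particular $\sharp_g(D)$ is defined. That is the converse, and the lemma follows.

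Since both ingredients feeding this argument are already in hand, there is no real obstacle remaining — the statement is essentially a bookkeeping corollary of the sharp-algorithm analysis. If one were proving it from scratch, the genuinely delicate points would be the two already handled inside those lemmas: first, that the cascade of swaps in Step~5 terminates while only ever producing a single, localized violation of condition~(4) of Definition~\ref{def: KT} in column~$1$ (so that the output is actually in $\KT(\alpha)$); and second, that no row index $r$ with $k<r<g$ yields a valid lift of column~$1$, which is what pins $k$ down as the largest admissible choice and thereby identifies the algorithm's output $D'$ with the first component of $\sharp_g(D)$ as defined.
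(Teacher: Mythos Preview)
Your proposal is correct and matches the paper's own proof essentially verbatim: the paper simply says the forward direction is Lemma~\ref{L: sharp well-defined0} and the other direction follows from the sharp algorithm, which is exactly your argument (with a bit more detail spelled out about why Step~2 succeeds under the stated hypothesis).
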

\begin{proof}
The forward direction is given 
by Lemma~\ref{L: sharp well-defined0}.
The other direction follows from the sharp algorithm.
\end{proof}

\begin{cor}
\label{C: numbers fixed by sharp}
Take $D \in \KD(\alpha)$.
Assume $\sharp_g(D) = (D', k)$.
Assume $(1,k)$ is filled by $m$ 
in $\Label_\alpha(D)$. 
Then $\Label_\alpha(D)$ and $\Label_\alpha(D')$
agree at the cell $(c,r)$, if $(c,r)$ satisfies
one of the following:
\begin{enumerate}
\item[$\bullet$] $c > 1$;
\item[$\bullet$] $r < g$ and $r \neq k$;
\item[$\bullet$] $r > m$.
\end{enumerate}
\end{cor}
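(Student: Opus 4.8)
The plan is to trace the sharp algorithm carefully and compare it step-by-step with the labeling algorithm on $D$. Recall from the proof of Lemma~\ref{L: sharp well-defined} that the sharp algorithm first computes $\Label_\alpha(D)$, then identifies the highest number $m \geqslant g$ lying weakly below row $g$ in column 1 (located at row $k$), moves this $m$ up to row $g$ in column 1, and finally performs a sequence of swaps in column 1 to restore condition four of Definition~\ref{def: KT}. The final filling is $\Label_\alpha(D')$. So the task reduces to understanding exactly which cells of $\Label_\alpha(D)$ are touched by Step 4 and Step 5.

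First I would handle the case $c > 1$. Step 4 and Step 5 of the sharp algorithm only ever modify entries in column 1, so columns $c > 1$ of the filling are literally unchanged. Since the filling is $\Label_\alpha(D')$, we get $\Label_\alpha(D)$ and $\Label_\alpha(D')$ agree at every cell with $c > 1$.

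Next I would handle the column-1 cases. In Step 4 the entry $m$ moves from $(1,k)$ to $(1,g)$; since $k \leqslant g$, this only affects rows in the closed interval $[k, g]$ (entries strictly between get shifted, entries outside are untouched as far as Step 4 is concerned — actually Step 4 is just relocating one label, leaving a hole at $(1,k)$ that the column below $k$ does not fill since we're viewing this as a set of labeled cells, not a compaction; but the cleanest way is to observe Step 4 changes the label only at positions in $[k,g]$). Then in Step 5 each swap interchanges $m$ with some smaller number $u < m$ sitting below it in column 1; crucially the proof of the preceding lemma shows the only violation of condition four at any stage involves $m$ itself, so every swap involves $m$ and a partner lying somewhere between row $g$ and the current row of $m$. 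Tracking $m$ downward through these swaps, $m$ starts at row $g$ and only ever moves down, so it never goes above row $g$; and it started (in $D$) at row $k \leqslant m$, and all swap-partners $u$ satisfy $u < m$, hence all partners sit at rows that in $\Label_\alpha(D)$ held numbers $< m$, which (by condition two, $u \geqslant$ its row index, combined with the fact that these are the numbers displaced) forces every modified row index to lie in $(g, \text{something} \leqslant m]$ — more precisely, every cell whose label changes has row index $r$ with $g \leqslant r \leqslant m$ or $r = k$. Therefore cells with $r < g$ and $r \neq k$, and cells with $r > m$, keep their labels, which is exactly the claim.

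The main obstacle I expect is pinning down precisely the upper bound on the rows disturbed by Step 5 — namely that no swap partner sits above row $m$ in $\Label_\alpha(D)$. This needs the observation from the previous lemma's proof that at every stage the sole condition-four violation has the form $(i, m)$ with $i < m$ and $i$ strictly above $m$ in column 1; combined with the fact that the numbers originally occupying rows in $(g, m)$ of column~1 of $\Label_\alpha(D)$ are all $< m$ (by maximality of $m$ among entries $\geqslant g$ that are weakly below row $g$, every entry strictly between rows $g$ and $m$ is $< g \leqslant m$), one concludes $m$ only swaps downward with labels that were at rows in $(g, m)$, so rows $> m$ are never touched; rows $< g$ other than $k$ are never touched since $m$'s trajectory and all its partners stay in $\{k\} \cup [g, m]$. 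Assembling these three observations gives the corollary.
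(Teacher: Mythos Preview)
Your overall approach—tracing Steps 4 and 5 of the sharp algorithm—matches the paper's, and your final conclusion that the modified cells in column 1 lie in $\{k\}\cup[g,m]$ is correct. However, you have the direction of movement in Step 5 backwards, and this makes the middle of your argument internally inconsistent (you assert both that $m$ ``never goes above row $g$'' and that the modified rows lie in $(g,m]$).

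In a condition-(4) violation by the pair $u<m$, the larger number $m$ is the \emph{lower} one: that is precisely the hypothesis of condition (4), ``$j>i$ and $j$ is lower than $i$''. So the partner $u$ sits \emph{above} $m$, not below, and each swap moves $m$ \emph{upward}. After Step 4, $m$ starts at row $g$ and climbs; this is visible in Example~\ref{eg_sharp}, where $m=6$ moves from row $3$ to row $4$ to row $5$.

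Once the direction is fixed, the upper bound is immediate and does not require your maximality-of-$m$ argument (which in any case concerns entries weakly \emph{below} row $g$, not above, and so says nothing about rows in $(g,m)$). The preceding lemma's proof establishes that conditions (1)--(3) of Definition~\ref{def: KT} hold after every swap; condition (2) alone then forces $m$'s row index to be at most $m$ throughout. Hence Step 5 touches only cells $(1,r)$ with $g\leqslant r\leqslant m$, while Step 4 touches only $(1,k)$ and $(1,g)$. The corollary follows. This is exactly the paper's argument, stated in two sentences.
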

\begin{proof}
By the behavior of the sharp algorithm,
$\Label_\alpha(D')$ is obtained from
$\Label_\alpha(D)$ by moving $m$ from $(1,k)$ 
to $(1,g)$ 
and repeatedly swapping $m$ with a number above it.
The number $m$ will not go above row $m$,
so only $(1,k)$ and cells between row $g$ and 
row $m$ in column 1 are affected.
\end{proof}

\subsection{Commutativity of $\sharp_g$ operators}
Next, we observe that two $\sharp_g$
operators might ``commute'' under certain conditions.
Consider the following example:
\begin{exa}
Let $\alpha = (0,0,2,1)$.
Let $D$ be the following element in $\KD(\alpha)$.
We first apply $\sharp_3$ and get $(D^1, 2)$.
Then apply $\sharp_4$ on $D^1$ and get $(D^{final}, 1)$.
\[
D = 
\raisebox{0.5cm}{
\begin{ytableau}
\none \\
\none \\
*(yellow)\cdot & \cdot \\
\cdot \\
\end{ytableau}}
\xrightarrow[2]{\sharp_{3}}
\raisebox{0.5cm}{
\begin{ytableau}
\none \\
\cdot \\
\none & \cdot \\
*(yellow)\cdot \\
\end{ytableau}}
\xrightarrow[1]{\sharp_{4}}
\raisebox{0.5cm}{
\begin{ytableau}
\cdot \\
\cdot \\
\none & \cdot \\
\none \\
\end{ytableau}}
= D^{final}
\]

We can try to swap the order 
of these two operators.
We first apply $\sharp_4$ on $D$
and get $(D^2, 1)$.
Then we apply $\sharp_3$ on $D^2$
and get $(D^{final}, 2)$.
\[
D = 
\raisebox{0.5cm}{
\begin{ytableau}
\none \\
\none \\
\cdot & \cdot \\
*(yellow)\cdot \\
\end{ytableau}}
\xrightarrow[1]{\sharp_{4}}
\raisebox{0.5cm}{
\begin{ytableau}
\cdot \\
\none \\
*(yellow)\cdot & \cdot \\
\none \\
\end{ytableau}}
\xrightarrow[2]{\sharp_{3}}
\raisebox{0.5cm}{
\begin{ytableau}
\cdot \\
\cdot \\
\none & \cdot \\
\none \\
\end{ytableau}}
= D^{final}
\]

Observe that changing the order of these
two operators will not affect the final Kohnert diagram.
\end{exa}

This phenomenon is captured by the following two lemmas. 
\begin{lemma}
\label{L: sharp commute 1}
Take $D \in \KD(\alpha)$.
Take $g_1, g_2 \in [n]$ with $g_1 < g_2$.
Assume $\sharp_{g_1}(D) = (D^1, k_1)$
and
$\sharp_{g_2}(D^1) = (D^{final}, k_2)$.
If $k_1 > k_2$,
then the two operators ``commute''.
That is:
\begin{enumerate}
\item[$\bullet$] $\sharp_{g_2}(D) = (D^2, k_2)$ 
for some $D^2 \in \KD(\alpha)$, and
\item[$\bullet$] $\sharp_{g_1}(D^2) = (D^{final}, k_1)$.
\end{enumerate}
\end{lemma}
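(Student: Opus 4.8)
The plan is to track the two operators through their concrete realization via the sharp algorithm and the Kohnert labeling $\Label_\alpha(\cdot)$, using Corollary~\ref{C: numbers fixed by sharp} to control which cells change. Write $D = (K_1, \emptyset, d)$. Since $g_1 < g_2$ and $\sharp_{g_1}(D) = (D^1,k_1)$, we have $k_1 \leqslant g_1 < g_2$; similarly $k_2 \leqslant g_2$. The hypothesis $k_1 > k_2$ together with $k_1 \leqslant g_1$ forces $k_2 < g_1$. Let $m_1$ be the entry of $\Label_\alpha(D)$ at $(1,k_1)$, i.e.\ the highest number $\geqslant g_1$ sitting weakly below row $g_1$ in column~1 of $\Label_\alpha(D)$; the first step of applying $\sharp_{g_1}$ is to slide this $m_1$ up to row $g_1$ and then percolate it upward by Step~5 swaps.

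First I would establish that $\sharp_{g_2}(D)$ is defined with second component $k_2$. By Lemma~\ref{L: sharp well-defined}, $\sharp_{g_2}(D^1)$ being defined gives $|[g_2,n]\cap\supp(\alpha)| > |(g_2,n]\cap K_1^1|$ where $K_1^1 = (K_1 - \{k_1\})\sqcup\{g_1\}$; since $g_1 < g_2$ this set agrees with $K_1$ above row $g_2$, so the same inequality holds for $K_1$ and $\sharp_{g_2}(D)$ is defined. To pin down its second component: the sharp algorithm for $\sharp_{g_2}$ on $D$ locates the highest number $m_2 \geqslant g_2$ weakly below row $g_2$ in column~1 of $\Label_\alpha(D)$. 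The key claim is that this $m_2$ sits at row $k_2$ in $\Label_\alpha(D)$, i.e.\ the same row it occupies in $\Label_\alpha(D^1)$. This uses Corollary~\ref{C: numbers fixed by sharp} applied to $\sharp_{g_1}$: since $k_2 < g_1 \leqslant g_1$ and $k_2 \neq k_1$, the cell $(1,k_2)$ has the same label in $\Label_\alpha(D)$ and $\Label_\alpha(D^1)$, and likewise all cells weakly below row $g_2$ except possibly those in the affected range (rows $g_1$ through $m_1$) are unchanged; one checks the number selected as $m_2$ lies outside that range or is unaffected, so $m_2$ and its row $k_2$ are the same whether computed from $D$ or $D^1$. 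Hence $\sharp_{g_2}(D) = (D^2,k_2)$.

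Next I would show $\sharp_{g_1}(D^2) = (D^{final}, k_1)$. Here $D^2 = (K_1^2,\emptyset,d)$ with $K_1^2 = (K_1 - \{k_2\})\sqcup\{g_2\}$. Definedness of $\sharp_{g_1}(D^2)$ follows from Lemma~\ref{L: sharp well-defined} since $K_1^2$ and $K_1$ agree above row $g_1$ (because $k_2 < g_1 \leqslant g_1 < g_2$ means both the removal at $k_2$ and the insertion at $g_2$ happen at or below... wait, $g_2 > g_1$, so the insertion at $g_2$ is below row $g_1$ in our bottom-up convention only if $g_2$... ) --- more carefully, $(g_1,n]\cap K_1^2 = (g_1,n]\cap K_1$ since $k_2 \leqslant g_1$ is removed from outside $(g_1,n]$ and $g_2 \in (g_1,n]$ is added while $k_1 \in (g_1,n]$... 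I must instead argue directly that the highest $m \geqslant g_1$ weakly below row $g_1$ in $\Label_\alpha(D^2)$ is still $m_1$ at row $k_1$: by Corollary~\ref{C: numbers fixed by sharp} applied to $\sharp_{g_2}$ on $D$, the labels at all cells with $r < g_2$, $r \neq k_2$ are unchanged from $\Label_\alpha(D)$, so in particular the cell $(1,k_1)$ (note $k_1 \leqslant g_1 < g_2$ and $k_1 \neq k_2$) still holds $m_1$, and the selection of $m_1$ as highest qualifying entry is unaffected. Then applying $\sharp_{g_1}$ to $D^2$ performs exactly the same sequence of moves on the relevant cells as applying $\sharp_{g_1}$ to $D$ followed by $\sharp_{g_2}$; comparing the resulting Kohnert sets directly, $(K_1^2 - \{k_1\})\sqcup\{g_1\} = (K_1 - \{k_1\} - \{k_2\})\sqcup\{g_1,g_2\} = (K_1^1 - \{k_2\})\sqcup\{g_2\}$, which is the Kohnert set of $D^{final}$, and the $d$-part is untouched throughout. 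Thus $\sharp_{g_1}(D^2) = (D^{final},k_1)$.

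The main obstacle is the bookkeeping in the two claims that the selected rows $m_1$-at-$k_1$ and $m_2$-at-$k_2$ are genuinely unaffected by the intervening operator: this is where the hypothesis $k_1 > k_2$ is essential, since it guarantees the two ``active'' regions of column~1 are nested in a way that the Step~5 percolation of one operator does not disturb the entry the other operator needs to locate. I would isolate this as the crux and verify it by a careful case analysis using Corollary~\ref{C: numbers fixed by sharp} on both ranges $[g_1,m_1]$ and $[g_2,m_2]$, checking that $k_2 < g_1$ places $k_2$ strictly below the first active range and that the entry $m_2$ either lies above row $m_1$ or coincides with an unaffected cell.
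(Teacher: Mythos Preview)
Your approach is the same as the paper's --- track column-1 labels through the sharp algorithm via Corollary~\ref{C: numbers fixed by sharp} --- but there is a genuine gap in the first half, where you argue that $\sharp_{g_2}(D)$ has second component $k_2$.

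Knowing that the cell $(1,k_2)$ carries the same label $m_2$ in $\Label_\alpha(D)$ and $\Label_\alpha(D^1)$ is not enough. You must also rule out any number $\geqslant g_2$ sitting at a row in $(k_2,g_2]$ in $\Label_\alpha(D)$; otherwise such a number would be selected instead of $m_2$. Your sentence ``one checks the number selected as $m_2$ lies outside that range or is unaffected'' addresses only where $m_2$ itself sits, not whether a rival candidate exists. Concretely, the entry $m_1$ sits at row $k_1 \in (k_2,g_1] \subset (k_2,g_2]$ in $\Label_\alpha(D)$, so if $m_1 \geqslant g_2$ then $\sharp_{g_2}$ acting on $D$ would pick row $k_1$ (or higher), not $k_2$, and the lemma would fail. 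Thus the crux is to prove $m_1 < g_2$, and this is exactly what your proposed case analysis omits.

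The paper supplies the missing step as follows. One first shows that after $\sharp_{g_1}$ the number $m_1$ lands strictly below row $g_2$ in $\Label_\alpha(D^1)$: if not, a counting argument produces some $u$ that was weakly above row $g_2$ in $\Label_\alpha(D)$ (hence $u \geqslant g_2$ by condition~(2) of Kohnert tableaux) but is now below row $g_2$ and above row $g_1 > k_2$ in $\Label_\alpha(D^1)$, contradicting the choice of $m_2$ at row $k_2$. Once $m_1$ is known to sit at a row in $(k_2,g_2)$ in $\Label_\alpha(D^1)$, the selection of $m_2$ there forces $m_1 < g_2$. With $m_1 < g_2$ in hand, the affected range $[g_1,m_1]$ lies entirely inside $(k_2,g_2)$, the multiset of labels there is merely permuted between $D$ and $D^1$, and your remaining bookkeeping goes through. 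Your argument for the second bullet (that $\sharp_{g_1}(D^2)$ picks $k_1$) is essentially correct once this is fixed.
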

\begin{proof}
Let $C$ be the first column 
of $\Label_\alpha(D)$.
Define $C^1$ and $C^{final}$ similarly. 
In $C$, 
let $m_1$ (resp. $m_2$) be the
number at row $k_1$ (resp. $k_2$).

First, we claim $m_1$ is below 
row $g_2$ in $C^1$.
If not, 
then we may find a number $u$ 
that is weakly above row $g_2$ in $C$
but below row $g_2$ in $C^1$.
By Corollary~\ref{C: numbers fixed by sharp},
the $u$ is above row $g_1$ in $C^1$,
so $u$ is higher than $m_2$.
By $u \geqslant g_2$,
we should pick $u$ rather than $m_2$
when computing $\sharp_{g_2}(D^1)$.
Contradiction. 
 
Now consider $C^1$.
By Corollary~\ref{C: numbers fixed by sharp},
$m_2$ is still at row $k_2$.
All numbers between $m_2$ and row $g_2$ 
will be less than $g_2$.
Thus, $m_1 < g_2$.
Since $C$ and $C^1$ only differ 
between row $g_1$ and row $m_1$,
all numbers between $m_2$ and row $g_2$ 
will be less than $g_2$ in $C$.
Thus, $\sharp_{g_2}$ will also pick $m_2$ 
when acting on $D$.
$\sharp_{g_2}(D) = (D^2, k_2)$.
Column 1 of $\Label_\alpha(D^2)$
agrees with $C$ between row $k_1$
and $g_1$.
Thus, $\sharp_{g_1}$ will pick $m_1$
when acting on $D^2$.
\end{proof}

\begin{lemma}
\label{L: sharp commute 2}
Take $D \in \KD(\alpha)$.
Take $g_1, g_2 \in [n]$ with $g_1 < g_2$.
Assume $\sharp_{g_2}(D) = (D^2, k_2)$
and
$\sharp_{g_1}(D_2) = (D^{final}, k_1)$.
If $k_1 > k_2$,
then the two operators ``commute''.
That is:
\begin{enumerate}
\item[$\bullet$] $\sharp_{g_1}(D) = (D^1, k_1)$ 
for some $D^1 \in \KD(\alpha)$, and
\item[$\bullet$] $\sharp_{g_2}(D^1) = (D^{final}, k_2)$.
\end{enumerate}
\end{lemma}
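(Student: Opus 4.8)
Since the statement is exactly the converse of Lemma~\ref{L: sharp commute 1} (there the order-$(g_1,g_2)$ path is given and the order-$(g_2,g_1)$ path is produced; here it is the other way round), the plan is to run the dual of that proof. Write $K_1$ for the set of rows occupied by the column-$1$ Kohnert cells of $D$ and $C$ for column~$1$ of $\Label_\alpha(D)$, and likewise $K_1^{(2)}$, $C^2$ for $D^2$. The hypotheses $k_2\in K_1\cap[1,g_2]$, $k_1\in K_1^{(2)}\cap[1,g_1]$, $k_1>k_2$, together with $g_1<g_2$, give the bookkeeping facts $k_2<k_1\le g_1<g_2$, $k_1\in K_1$, $k_1\ne k_2$, $g_2\notin K_1$, and $K_1^{(2)}=(K_1\setminus\{k_2\})\cup\{g_2\}$; moreover if $g_1\in K_1$ then $k_1=g_1$ and the claim is immediate (both sides collapse to $D^1=D$, $D^{final}=D^2$), so I assume $g_1\notin K_1$.

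First I would check that $\sharp_{g_1}(D)$ is defined. By Lemma~\ref{L: sharp well-defined} this is the inequality $|[g_1,n]\cap\supp(\alpha)|>|(g_1,n]\cap K_1|$, and since $\sharp_{g_1}(D^2)$ is defined the same inequality holds with $K_1^{(2)}$ in place of $K_1$; as $k_2<g_1$ and $g_2\notin K_1$ force $|(g_1,n]\cap K_1^{(2)}|=|(g_1,n]\cap K_1|+1$, the desired inequality follows. Next I would pin down $\sharp_{g_1}(D)$. Let $m_1$ be the entry of $C$ at row $k_1$. By Corollary~\ref{C: numbers fixed by sharp} applied to $\sharp_{g_2}(D)=(D^2,k_2)$, the columns $C$ and $C^2$ agree at every row $<g_2$ other than $k_2$; since $k_1<g_2$ and $k_1\ne k_2$, the entry of $C^2$ at row $k_1$ is also $m_1$, and because $\sharp_{g_1}(D^2)$ selects row $k_1$ we learn that $m_1\ge g_1$ and that no row of $(k_1,g_1]$ carries an entry $\ge g_1$ in $C^2$, hence none does in $C$ either. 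Therefore the sharp algorithm run on $(g_1,D)$ selects the entry $m_1$ at row $k_1$; by the properties of that algorithm proved above, its output is $\sharp_{g_1}(D)=(D^1,k_1)$, where $D^1$ is obtained from $D$ by moving the column-$1$ Kohnert cell at row $k_1$ to row $g_1$.

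It remains to show $\sharp_{g_2}(D^1)=(D^{final},k_2)$, which is where the real work lies. Write $K_1^{(1)}=(K_1\setminus\{k_1\})\cup\{g_1\}$ and $C^1$ for column~$1$ of $\Label_\alpha(D^1)$. Defined-ness is routine: $k_1\le g_1<g_2$ gives $(g_2,n]\cap K_1^{(1)}=(g_2,n]\cap K_1$, so Lemma~\ref{L: sharp well-defined} applies through $\sharp_{g_2}(D)$. The crucial sub-claim is that $m_1<g_2$: were $m_1\ge g_2$, then $C$ would carry an entry $\ge g_2$ at row $k_1$ with $k_2<k_1\le g_2$, contradicting the fact that $\sharp_{g_2}(D)$ selected row $k_2$. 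Granting $m_1<g_2$, Corollary~\ref{C: numbers fixed by sharp} applied to $\sharp_{g_1}(D)=(D^1,k_1)$ shows that $C$ and $C^1$ agree on all rows $>m_1$, hence on all rows $\ge g_2$, and on all rows $<g_1$ other than $k_1$, in particular on all rows $\le k_2$. Since column~$1$ of every Kohnert tableau of content $\alpha$ has entry-set exactly $\supp(\alpha)$, the columns $C$ and $C^1$ carry the same entry-set on the rows below $g_2$, and after removing the common entries on the rows $\le k_2$, the same entry-set on the rows of $(k_2,g_2)$. Because $\sharp_{g_2}(D)$ selected row $k_2$, every entry of $C$ on the rows of $(k_2,g_2)$ is $<g_2$, so the same is true of $C^1$; meanwhile row $g_2$ is empty in $C^1$, and row $k_2$ of $C^1$ still carries the entry $m_2$ that $C$ has there, which is $\ge g_2$ because $\sharp_{g_2}(D)$ selected row $k_2$. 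Hence the sharp algorithm on $(g_2,D^1)$ selects row $k_2$, so $\sharp_{g_2}(D^1)=(D',k_2)$ with $D'$ having column-$1$ rows $(K_1\setminus\{k_1,k_2\})\cup\{g_1,g_2\}$ and the same $d$; the identical description holds for $D^{final}=\sharp_{g_1}(D^2)$, so $D'=D^{final}$ and the proof is complete.

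I expect the main obstacle to be this last step: transferring the statement ``$\sharp_{g_2}$ selects row $k_2$'' from $D$ to $D^1$ even though $C$ and $C^1$ differ throughout the block of rows $[g_1,m_1]$. The clean way is the entry-set argument above, which relies both on the sub-claim $m_1<g_2$ and on the invariance of the column-$1$ entry-set of a Kohnert tableau; a direct row-by-row comparison through $[g_1,m_1]$ also works but is much more tedious.
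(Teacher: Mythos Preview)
Your proof is correct and follows essentially the same route as the paper: use Corollary~\ref{C: numbers fixed by sharp} to see that $C$ and $C^2$ agree on rows strictly between $k_2$ and $g_2$, deduce $\sharp_{g_1}(D)=(D^1,k_1)$, establish $m_1<g_2$, and then argue that all column-$1$ entries of $\Label_\alpha(D^1)$ on rows in $(k_2,g_2)$ remain $<g_2$ so that $\sharp_{g_2}(D^1)$ again selects row $k_2$. Your treatment is more explicit than the paper's (separating out the $g_1\in K_1$ case, checking defined-ness via Lemma~\ref{L: sharp well-defined}, and justifying the last step by the global entry-set invariance of column~$1$ rather than by directly tracking which rows the sharp algorithm touches), but the underlying argument is the same.
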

\begin{proof}
Let $C$ be the first column 
of $\Label_\alpha(D)$.
Define $C^2$ and $C^{final}$ similarly. 
In $C$, 
let $m_1$ (resp. $m_2$) be the
number at row $k_1$ (resp. $k_2$).

First, $C$ and $C^2$ agree
between row $k_2$ and row $g_2$.
Thus, $\sharp_{g_1}$ would also 
pick $m_1$ when acting on $D$,
so $\sharp_{g_1}(D) = (D^1, g_2)$.

Since $\sharp_{g_2}$ picks
$m_2$ when acting on $D$,
all numbers between row $k_2$ and row $g_2$
in $C$ are less than $g_2$.
In particular, $m_1 < g_2$.
We know column 1 of $D^1$ is obtained
from $C$ by changing cells 
between row $k_1$ and row $m_1$.
Thus, in column 1 of $D^1$,
all numbers between row $k_2$ and row $g_2$ are still less than $g_2$.
When acting on $D^1$, 
$\sharp_{g_2}$ would still pick $m_2$.
\end{proof}

\subsection{Introducing the $\flat_k$ operator}
Next, we define the operator $\flat_k$,
which can be viewed as the (partial) inverse of $\sharp_g$.

\begin{defn}
For each $k \in [n]$,
define $\flat_k: \KD(\alpha) \rightarrow 
\KD(\alpha) \times [n]$.
Take $D = (K_1, \emptyset, d) \in \KD(\alpha)$.
Find the smallest $g \in K_1 \cap [k, n]$ such that
$D' = ((K_1 - \{g\}) \sqcup \{k\}, \emptyset, d)$ is still 
in $\KD(\alpha)$.
If such $g$ exists, 
then $\flat_k(D) := (D', k)$.
Otherwise, $\flat_k(D)$ is undefined.
\end{defn}

\begin{lemma}
\label{L: sharp and flat are inverses}
Take $D \in \KD(\alpha)$.
\begin{enumerate}
\item[$\bullet$] Take $g \in [n]$. 
If $\sharp_g(D) = (D', k)$,
then $\flat_k(D') = (D, g)$.
\item[$\bullet$] Take $k \in [n]$. 
If $\flat_k(D) = (D', g)$,
then $\sharp_g(D') = (D, k)$.
\end{enumerate}
In other words,
$\flat_k$ and $\sharp_g$ are (partial) inverses of each other.
\end{lemma}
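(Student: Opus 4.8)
The plan is to prove the two bullet points of Lemma~\ref{L: sharp and flat are inverses} using the explicit structure of the sharp algorithm (and its obvious analogue for $\flat$), together with the commutation-free picture of how these operators affect only column~$1$ of the labelled diagram. I will treat the first bullet in detail; the second follows by a symmetric argument. So suppose $D = (K_1,\emptyset,d) \in \KD(\alpha)$ and $\sharp_g(D) = (D',k)$. By the sharp algorithm, $k$ is the row of the highest number $m \geqslant g$ lying weakly below row $g$ in column~$1$ of $\Label_\alpha(D)$, and $D' = ((K_1 - \{k\}) \sqcup \{g\},\emptyset,d)$; moreover $\Label_\alpha(D')$ is obtained from $\Label_\alpha(D)$ by moving $m$ from $(1,k)$ to $(1,g)$ and then performing the swaps of Step~5, so by Corollary~\ref{C: numbers fixed by sharp} the two labellings agree outside column~$1$ and outside the rows strictly between $g$ and $m$, plus the cell $(1,k)$.

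First I would check that $\flat_k(D')$ is defined and that the $g$ it selects is the original $g$. Since $\sharp_g(D)$ was defined, Lemma~\ref{L: sharp well-defined} gives $|[g,n] \cap \supp(\alpha)| > |(g,n] \cap K_1|$; translating this into the language of the $\flat_k$ operator (which requires the analogous inequality $|[k,n] \cap \supp(\alpha)| \geqslant$ something, obtainable from $D'$ having its column-$1$ cell pattern $(K_1 - \{k\}) \sqcup \{g\}$) shows $\flat_k(D')$ is defined. To pin down the output value, I would run the $\flat_k$ algorithm on $D'$: it looks for the smallest $g' \in ((K_1 - \{k\})\sqcup\{g\}) \cap [k,n]$ such that dropping row $g'$ to row $k$ stays in $\KD(\alpha)$. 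Clearly $g$ itself works, since dropping row $g$ of $D'$ to row $k$ returns exactly $D$, which lies in $\KD(\alpha)$. The content of the argument is that no row index strictly between $k$ and $g$ that is occupied in $D'$ can be dropped to $k$ while staying in $\KD(\alpha)$. This is the mirror image of the last lemma proved in the $\sharp_g$ discussion (the proof that $D'$ from the sharp algorithm is \emph{the} diagram produced by $\sharp_g$, i.e.\ that intermediate rows $r$ with $k < r < g$ do not work): there, one argued via the labelling algorithm that the numbers $s_1,\dots,s_p < g$ sitting in those intermediate rows force termination. Here, the same numbers obstruct dropping an intermediate occupied row to row $k$, since after such a drop there would be too few cells above row $k$ in column~$1$ to house all of $[k,n]\cap\supp(\alpha)$, or one of the small numbers would be forced above row $g$ and below $g$ in value—contradicting condition~(2) or~(4) of Definition~\ref{def: KT} for $\Label_\alpha$. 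So $\flat_k(D') = (D, g)$.

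For the second bullet, I would argue symmetrically: given $\flat_k(D) = (D',g)$, the ``flat algorithm'' picks $g$ as the row of the relevant number and produces $D' = ((K_1 - \{g\})\sqcup\{k\},\emptyset,d)$ with $\Label_\alpha(D')$ obtained by dropping that number from row $g$ to row $k$ and performing downward swaps. Then running $\sharp_g$ on $D'$: the value $k$ works since raising row $k$ back to row $g$ returns $D$, and no intermediate occupied row $r$ with $k < r < g$ can be raised to $g$ while staying in $\KD(\alpha)$, again because the labelling algorithm would be obstructed by the numbers forced into the wrong rows. Hence $\sharp_g(D') = (D,k)$.

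The main obstacle I anticipate is the bookkeeping in the ``no intermediate row works'' step: one must be careful that the set of occupied intermediate rows and the associated small labels behave the same way after the single move as before it, which requires invoking Corollary~\ref{C: numbers fixed by sharp} to control exactly which cells of $\Label_\alpha$ change. Once that stability is in hand, the argument is essentially the transcription of the uniqueness lemma for $\sharp_g$ (and its mirror) into the $\flat_k$ setting, so the two bullets are genuinely each other's reflections and no new idea beyond the labelling-algorithm obstruction is needed.
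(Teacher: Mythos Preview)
Your proposal is correct in spirit but dramatically overcomplicated, and it misses the one-line observation that makes the paper's proof trivial.

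The paper's proof uses only the definitions of $\sharp_g$ and $\flat_k$, with no appeal to the sharp algorithm, Kohnert labellings, or Corollary~\ref{C: numbers fixed by sharp}. The key point you overlook is this: for any $r \in K_1 \cap (k,g) = K_1' \cap (k,g)$, the diagram that $\flat_k$ would test on $D'$ at row $r$ is
\[
\bigl((K_1' - \{r\}) \sqcup \{k\},\ \emptyset,\ d\bigr)
= \bigl((K_1 - \{r\}) \sqcup \{g\},\ \emptyset,\ d\bigr),
\]
which is \emph{literally the same} diagram that $\sharp_g$ tested on $D$ at row $r$. Since $\sharp_g(D)$ chose $k$ as the \emph{largest} row that works, every such $r$ already failed; hence none of the intermediate rows work for $\flat_k$ either, and $g$ (which returns $D$) is the smallest row that does. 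That is the entire argument. The second bullet is identical with the roles reversed.

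By contrast, you propose to re-establish the obstruction for each intermediate $r$ by a fresh labelling argument (``too few cells above row $k$\ldots one of the small numbers would be forced above row $g$''). This is both unnecessary and, as written, not quite right: the obstruction you sketch is not the mirror of the one in the $\sharp_g$ analysis but rather the \emph{same} obstruction applied to the \emph{same} diagram, so there is nothing new to prove. Once you notice the set-theoretic identity above, all the machinery you invoke (Lemma~\ref{L: sharp well-defined}, Corollary~\ref{C: numbers fixed by sharp}, the stability of labels under Step~5) becomes irrelevant to this lemma.
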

\begin{proof}
Assume $D = (K_1, \emptyset, d)$.
Consider the first statement.
If $g \in K_1$, 
then $k = g$ and $\flat_k(D') = (D, g)$ trivially.
Now assume $g \notin K_1$. 
Then $((K_1 - \{r\}) \sqcup \{g\} , \emptyset, d)$
is not in $\KD(\alpha)$ for all $r \in K_1$ with $k < r < g$.
Thus, $\flat_k(D') = (D, g)$.
The second statement can be proved similarly.
\end{proof}

We would like to determine when $\flat_k(D)$
is well-defined.
This is answered by the following lemma:
\begin{lemma}
\label{L: flat well-defined}
Assume $D = (K_1, \emptyset, d) \in \KD(\alpha)$.
Let $K_2$ be the set of row indices for cells
in column 2 of $D$.
Then $\flat_k(D)$ is well-defined if and only if $k \in K_1$
or
$|K_1 \cap (k, n]| 
> |K_2 \cap (k, n]|$.
\end{lemma}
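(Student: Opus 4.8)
The plan is to reduce this statement to the already-established characterization of when $\sharp_g$ is well-defined (Lemma~\ref{L: sharp well-defined}) via the partial inverse relationship between $\flat_k$ and $\sharp_g$ from Lemma~\ref{L: sharp and flat are inverses}, supplemented by a direct argument using the $\Label_\alpha$ machinery and Kohnert tableaux. First I would dispose of the trivial case $k \in K_1$: then $\flat_k(D) = (D, k)$ by definition, so it is always well-defined, matching the ``or'' clause. Hence assume $k \notin K_1$; I must show $\flat_k(D)$ is defined if and only if $|K_1 \cap (k,n]| > |K_2 \cap (k,n]|$.

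For the forward direction, suppose $\flat_k(D) = (D', g)$ for some $g \in K_1 \cap (k, n]$ with $D' = ((K_1 - \{g\}) \sqcup \{k\}, \emptyset, d) \in \KD(\alpha)$. By Lemma~\ref{L: sharp and flat are inverses}, $\sharp_g(D') = (D, k)$, so $\sharp_g(D')$ is well-defined. Writing $D' = (K_1', \emptyset, d)$ with $K_1' = (K_1 - \{g\}) \sqcup \{k\}$, Lemma~\ref{L: sharp well-defined} gives $|[g,n] \cap \supp(\alpha)| > |(g,n] \cap K_1'|$. I would then translate this into the desired column-1 versus column-2 count. The cleanest route is to look at $\Label_\alpha(D)$ directly: the cells of column 1 strictly above row $k$ are filled with distinct numbers from $[k+1, n] \cap \supp(\alpha)$, and condition (3) of Definition~\ref{def: KT} forces each such number $i$ to have an $i$ in column 2 weakly higher; counting these forces $|K_1 \cap (k,n]| \le |K_2 \cap (k,n]|$ whenever no valid $g$ exists — I would phrase this contrapositively. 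More precisely, if $|K_1 \cap (k,n]| \le |K_2 \cap (k,n]|$, then dropping any cell of column 1 from a row $>k$ down to row $k$ destroys the Kohnert-tableau property (by the same labeling-obstruction argument used in the proof that the sharp algorithm is correct, i.e. the pigeonhole step showing a too-small number lands weakly above row $g$), so no valid $g$ exists and $\flat_k(D)$ is undefined.

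For the reverse direction, assume $|K_1 \cap (k,n]| > |K_2 \cap (k,n]|$; I must exhibit a valid $g \in K_1 \cap (k,n]$. The natural candidate is to mimic the sharp algorithm in reverse: run $\Label_\alpha(D)$, and among the rows $r > k$ of column 1 choose $g$ to be the row of the \emph{lowest} entry $m$ in column 1 with $m > k$ lying strictly above row $k$ — dually to Step 2 of the sharp algorithm. The strict inequality $|K_1 \cap (k,n]| > |K_2 \cap (k,n]|$ guarantees (again by pigeonhole against the column-2 obstructions from condition (3)) that such an entry exists whose descent to row $k$ can be completed to a Kohnert tableau by a sequence of swaps analogous to Step 5; I would either invoke the already-proven well-definedness of the sharp algorithm applied to $D'$ and the inverse relation, or re-run the swap argument verbatim in the downward direction. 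The main obstacle I anticipate is the bookkeeping in this reverse direction: verifying that the analogue of the sharp algorithm's Step 5 swaps terminates and preserves conditions (1)--(3) of Definition~\ref{def: KT} while fixing the unique condition-(4) violation, exactly as in the lemma establishing correctness of the sharp algorithm, but now with the roles of ``above'' and ``below'' exchanged. Once that dual algorithm is in hand, both directions follow, and the proof concludes by combining them with the trivial case $k \in K_1$.
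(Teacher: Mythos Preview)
Your forward direction is essentially the paper's argument, though your first articulation misreads condition~(3) of Definition~\ref{def: KT}: it says the column-1 occurrence of a number is \emph{weakly higher} than the column-2 occurrence, not the other way around, and it does not force every column-1 entry to reappear in column~2. What actually works (and what your ``more precisely'' clause gestures at) is the contrapositive applied to the candidate $D'$: if $|K_1 \cap (k,n]| \le |K_2 \cap (k,n]|$ and we drop any column-1 cell from above row~$k$ down to row~$k$, then column~1 of $D'$ has strictly fewer cells above row~$k$ than column~2 does, so the $|K_2 \cap (k,n]|$ distinct labels appearing above row~$k$ in column~2 of $\Label_\alpha(D')$ cannot all fit above row~$k$ in column~1, violating condition~(3). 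This is exactly the paper's pigeonhole.

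The real gap is your reverse direction. Your proposed ``dual of Step~2'' does not make sense: Step~2 of the sharp algorithm finds an $m \ge g$ sitting weakly below row~$g$, and the literal dual would seek an $m \le k$ sitting above row~$k$, which is forbidden by condition~(2). Your fallback, ``lowest entry $m>k$ above row~$k$,'' selects every cell above row~$k$ (they all satisfy $m>k$), and picking the positionally lowest one can break condition~(3) with column~2 in a way that column-1 swaps cannot repair. The paper's key observation is different and avoids all of this: by the hypothesis $|K_1\cap(k,n]| > |K_2\cap(k,n]|$ there exists an $m$ in column~1 above row~$k$ that has \emph{no occurrence in column~2 above row~$k$}; take the lowest such $m$ and move it straight to $(1,k)$. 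With this choice, conditions (1)--(4) of Definition~\ref{def: KT} hold immediately with no swap procedure at all, so the resulting diagram is already in $\KD(\alpha)$ and $\flat_k(D)$ is defined. You are missing this choice of $m$ and the fact that it makes the downward move a one-step affair.
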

\begin{proof}
First, assume the condition fails.
We show $\flat_k(D)$ is undefined.
Assume by contradiction that 
$\flat_k(D) = (D', g)$.
Define $K_1'$ and $K_2'$ similarly for $D'$.
Then $K_1' = (K_1 - \{g\}) \sqcup \{k\}$
and $K_2' = K_2$.
Thus, 
$$
|K_1' \cap (k, n]| < |K_1 \cap (k, n]|
\leqslant |K_2 \cap (k, n]| = |K_2' \cap (k, n]|.
$$
Then consider $\Label_\alpha(D')$.
There are $|K_2' \cap (k, n]|$ distinct
numbers above row $k$ in column 2.
They all must appear above row $k$ 
in column 1,
but there are not enough cells for them.
Contradiction.

Now assume the condition holds,
we show $\flat_k(D)$ is well-defined.
Clearly, we are done if $k \in K_1$.
Now assume $|K_1 \cap (k, n]| 
> |K_2 \cap (k, n]|$ and
consider $\Label_\alpha(D)$.
By our assumption,
we can find $m$ above row $k$ in column 1
such that there is no $m$ 
above row $k$ in column 2.
Pick the lowest such $m$ and move it to $(1, k)$.
Then the resulting filling is in $\KT(\alpha)$:
\begin{enumerate}
\item Condition 1 of $\KT(\alpha)$ is clear.
\item Since we moved a cell down, 
condition 2 is clear.
\item Condition 3 holds for $m$ since there is no $m$ 
above row $k$ in column 2.
\item 
Only need to check 
there is no violations of condition 4
in column 1. 
Let $i<j$ be a violation.
Then $m$ must be $i$ or $j$.
If $i = m$, 
then $m, j$ would have violated condition 4 before the move.
Now assume $j = m$.
If $i$ were above $m$ in $\Label_\alpha(D)$,
then $i, m$ would have violated condition 4 before the move.
On the other hand, if $i$ were below $m$ in $\Label_\alpha(D)$,
then there is an $i$ above row $k$
in column 2,
so $i, m$ cannot be a violation. 
\end{enumerate}

Thus, after moving one cell 
down to $(1,k)$ in $D$,
the resulting diagram is still a Kohnert diagram,
so $\flat_k(D)$ is well-defined.
\end{proof}

\begin{rem}
In the previous proof of well-definedness, 
we choose the cell containing $m$ and move it down to row $k$.
The resulting diagram is still in $\KD(\alpha)$.
Notice that this might not be the lowest cell
that can do this job. 
See the following example. 
\end{rem}

\begin{exa} Following Example~\ref{eg_sharp}.
We would like to compute $\flat_1(D')$.
In $D'$, there are 4 cells in column 1
above row 1 and there are 2 cells in column 2
above row 1. 
Thus, the condition in Lemma~\ref{L: flat well-defined}.
is satisfied.
We want to
check $\flat_1(D')$ is well-defined.
The proof of well-definedness gives $m = 6$.
After moving the 6 to row 1, 
the resulting filling is in $\KT(\alpha)$,
which implies the underlying diagram is 
in $\KD(\alpha)$.
\[
D' = 
\raisebox{2cm}{
\begin{ytableau}
\cdot \\
\none \\
\cdot\\
\cdot \\
\cdot & \cdot \\
\none & \cdot \\
\none 
\end{ytableau}}\,,  \quad
\Label_\alpha(D') = \raisebox{2cm}{
\begin{ytableau}
7 \\
\none \\
*(yellow)6\\
5 \\
4 & 5 \\
\none & 4 \\
\none 
\end{ytableau}}\,\xrightarrow[]{}
\raisebox{2cm}{
\begin{ytableau}
7 \\
\none \\
\none \\
5\\
4 & 5 \\
\none & 4 \\
6
\end{ytableau}}
\]
However, moving the cell $(1,3)$ 
to $(1,1)$ in $D'$ will also make the resulting
diagram in $\KD(\alpha)$.
\end{exa}

\bigskip

Similar to $\sharp_g$, 
the $\flat_k$ operator can commute under certain condition:
\begin{lemma}
\label{L: flat commute}
Take $D \in \KD(\alpha)$.
Take $k_1, k_2 \in [n]$ with $k_1 > k_2$.
Assume $\flat_{k_1}(D) = (D^1, g_1)$ and
$\flat_{k_2}(D^1) = (D^{final}, g_2)$.
If $g_1 < g_2$,
then the two operators ``commute''.
That is:
\begin{enumerate}
\item[$\bullet$] $\flat_{k_2}(D) = (D^2, g_2)$ 
for some $D^2 \in \KD(\alpha)$, and
\item[$\bullet$] $\flat_{k_1}(D^2) = (D^{final}, g_1)$.
\end{enumerate}
\end{lemma}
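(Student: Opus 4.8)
The plan is to mirror the strategy used in Lemmas~\ref{L: sharp commute 1} and~\ref{L: sharp commute 2}, working entirely inside the first column of the Kohnert labelings, and to exploit that $\flat$ and $\sharp$ are partial inverses (Lemma~\ref{L: sharp and flat are inverses}). Write $C$ for the first column of $\Label_\alpha(D)$, and let $m_1$ (resp. $m_2$) be the number sitting at row $k_1$ (resp. row $k_2$) of $C$; since $k_1 > k_2$, the cell at row $k_1$ is weakly below the cell at row $k_2$, and by the semistandardness of $\Label_\alpha(D)$ we expect $m_1 \leqslant m_2$. The hypothesis $\flat_{k_1}(D) = (D^1, g_1)$ tells us $g_1 \in K_1 \cap [k_1, n]$ is the smallest row such that dropping cell $(1,g_1)$ to $(1,k_1)$ stays in $\KD(\alpha)$; similarly $\flat_{k_2}(D^1) = (D^{final}, g_2)$ identifies $g_2$ as the analogous smallest admissible row for $D^1$ above $k_2$.

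First I would record what $\flat_{k_1}$ does to the labeling: by the sharp/flat inverse relation, $\sharp_{g_1}(D^1) = (D, k_1)$, so by Corollary~\ref{C: numbers fixed by sharp} the labelings $\Label_\alpha(D)$ and $\Label_\alpha(D^1)$ differ only in column $1$, between rows $k_1$ and $m_1'$, where $m_1'$ is the number that $\sharp_{g_1}$ moves. Crucially, this means $C$ and the first column of $\Label_\alpha(D^1)$ \emph{agree at rows $\leqslant k_2$ and between rows $k_2$ and $g_2$} provided $g_2 > m_1'$ — and I would argue $g_1 < g_2$ together with the minimality defining $g_2$ forces the relevant cells to be untouched, so $\flat_{k_2}$ acting on $D$ sees the same column-$1$ data near row $k_2$ that it sees on $D^1$. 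Hence $\flat_{k_2}(D)$ picks the same source row $g_2$: that is, $\flat_{k_2}(D) = (D^2, g_2)$ for some $D^2 \in \KD(\alpha)$. This is the first bullet. (To make ``sees the same data'' precise, I would use the characterization of $\flat_k$ from the proof of Lemma~\ref{L: flat well-defined}: the chosen source is governed by which numbers $\geqslant$ the target row in column $1$ fail to reappear in column $2$ above that row, and column $2$ is untouched throughout.)

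For the second bullet, I would run the symmetric argument: having established $\flat_{k_2}(D) = (D^2,g_2)$, apply $\sharp_{g_2}(D^2) = (D, k_2)$ and Corollary~\ref{C: numbers fixed by sharp} to see $\Label_\alpha(D)$ and $\Label_\alpha(D^2)$ differ only in column $1$ between rows $k_2$ and $m_2'$; since $k_1 > k_2$ and $g_1 < g_2$, the cells of column $1$ at and above row $k_1$, and from row $k_1$ up to row $g_1$, coincide in $C$ and in $\Label_\alpha(D^2)$, so $\flat_{k_1}$ acting on $D^2$ again picks source row $g_1$, giving $\flat_{k_1}(D^2) = (D', g_1)$ for some $D'$. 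It then remains to check $D' = D^{final}$: both are obtained from the common diagram $D$ by removing cells at rows $g_1, g_2$ of column $1$ and adding cells at rows $k_1, k_2$ — the underlying \emph{diagrams} are literally equal since these four row-moves commute as set operations, and the component $d$ is never altered by a column-$1$ operator. So the two composites produce the same diagram pair.

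The main obstacle I anticipate is the bookkeeping in the middle step: verifying that when $\flat_{k_2}$ acts on $D$ rather than on $D^1$, it genuinely makes the \emph{same} choice of source row $g_2$ (and not merely that some valid choice exists). The danger is that the modification $\flat_{k_1}$ performed — which rearranges column-$1$ entries between rows $k_1$ and $m_1'$ — could overlap the range $[k_2, g_2]$ that $\flat_{k_2}$ scans, potentially changing which entries are ``available.'' The hypothesis $g_1 < g_2$ is exactly what should rule this out, since it forces $\flat_{k_1}$'s source row $g_1$ (and hence, I'd argue, the whole perturbed range) to lie strictly below $g_2$; but pinning down that the perturbed range is contained in $[k_1, g_1]$ — equivalently, that $m_1' \leqslant g_1$ — may require a short separate argument using the minimality in the definition of $\flat_{k_1}$ and the fact that $\sharp_{g_1}$ cannot raise a number above its own value. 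Once that containment is in hand, the rest is the routine symmetry exploited in Lemmas~\ref{L: sharp commute 1} and~\ref{L: sharp commute 2}.
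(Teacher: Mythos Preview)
Your plan takes a genuinely different and much harder route than the paper. The paper's proof is a one-liner: by Lemma~\ref{L: sharp and flat are inverses}, the hypotheses $\flat_{k_1}(D)=(D^1,g_1)$ and $\flat_{k_2}(D^1)=(D^{final},g_2)$ translate to $\sharp_{g_1}(D^1)=(D,k_1)$ and $\sharp_{g_2}(D^{final})=(D^1,k_2)$; with $g_1<g_2$ and $k_1>k_2$ this is exactly the hypothesis of Lemma~\ref{L: sharp commute 2} (applied with $D^{final}$ playing the role of the starting diagram), which yields $\sharp_{g_1}(D^{final})=(D^2,k_1)$ and $\sharp_{g_2}(D^2)=(D,k_2)$ for some $D^2$; translating back via Lemma~\ref{L: sharp and flat are inverses} gives precisely the two bullets. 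You invoke the inverse relationship only to extract labeling information, never to reduce the whole statement to Lemma~\ref{L: sharp commute 2}, so you end up re-proving that lemma's content from scratch.

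Your direct approach also has a setup error that would need fixing before it could go through. You write ``let $m_1$ (resp.\ $m_2$) be the number sitting at row $k_1$ (resp.\ row $k_2$) of $C$,'' but in the $\flat$ setting $k_1,k_2$ are the \emph{target} rows of the drops and are typically empty in $D$; the cells that move sit at rows $g_1,g_2$, not $k_1,k_2$. (Relatedly, Kohnert tableaux are not semistandard, so the inference ``$m_1\leqslant m_2$ by semistandardness'' is not available.) The bookkeeping obstacle you flag---showing the perturbed range from $\flat_{k_1}$ stays inside $[k_1,g_1]$ so that $\flat_{k_2}$ makes the same choice on $D$ as on $D^1$---is real, and resolving it would essentially reproduce the content of Lemma~\ref{L: sharp commute 2}. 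The paper sidesteps all of this by reusing that lemma directly.
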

\begin{proof}
It follows directly 
from Lemma~\ref{L: sharp commute 2} and 
Lemma~\ref{L: sharp and flat are inverses}.
\end{proof}

\subsection{Relations between $\sharp_g$ and $\flat_e$}
In this section, 
we investigate the relationship 
between the two operators introduced above.
We already know the effect of $\sharp_g$ 
can be reversed by the $\flat_e$ operator, and vice versa. 
Next, we show that a sequence of $\sharp_g$
can also be reversed by a sequence of $\flat_e$, and vice versa.

\begin{lemma}
\label{L: many flat after many sharp}
Let $D^0 = (K_1, \emptyset, d) \in \KD(\alpha)$,
and $1 < g_1 < g_2 < \dots < g_m\leqslant n$
with $g_i \notin K_1$.
For $i = 1, 2, \dots, m$,
compute $\sharp_{g_i}(D^{i-1}) = (D^i, k_i)$.
Assume $D^1, \dots, D^m$ are all well-defined.

Find the permutation $\sigma$
such that $k_{\sigma(1)} < \cdots < k_{\sigma(m)}$ and define $T^m = D^m$.
For $i = m, m-1, \dots, 1$,
compute $\flat_{k_{\sigma(i)}}(T^i) 
= (T^{i - 1}, g_i')$.
Then $T^0 = D^0$ and $\{g_1, \dots, g_m\} = 
\{ g_1', \dots, g_m'\}$.
\end{lemma}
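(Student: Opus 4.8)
The plan is to induct on $m$, peeling off the operator applied last on each side and using the commutation lemmas to line up the two sequences. For the base case $m=1$, the claim is exactly Lemma~\ref{L: sharp and flat are inverses}: $\sharp_{g_1}(D^0)=(D^1,k_1)$ gives $\flat_{k_1}(D^1)=(D^0,g_1)$, so $T^0=D^0$ and $\{g_1\}=\{g_1'\}$. For the inductive step, consider the last sharp move $\sharp_{g_m}(D^{m-1})=(D^m,k_m)$ and, on the $\flat$-side, the first flat move applied, namely $\flat_{k_{\sigma(m)}}(T^m)$, where $k_{\sigma(m)}=\max\{k_1,\dots,k_m\}$. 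If $k_m$ happens to be this maximum, then $\flat_{k_m}(D^m)=(D^{m-1},g_m)$ directly by Lemma~\ref{L: sharp and flat are inverses}, and after stripping $g_m$ we are left with the $m-1$ sharp moves $\sharp_{g_1},\dots,\sharp_{g_{m-1}}$ applied to $D^0$ followed by the $\flat$-moves indexed by the remaining $k_i$'s in increasing order; the inductive hypothesis finishes the argument, and $\{g_1',\dots,g_m'\}=\{g_1,\dots,g_{m-1}\}\cup\{g_m\}$.

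The substantive case is when $k_m$ is \emph{not} the largest of the $k_i$. Here I would use Lemma~\ref{L: sharp commute 1} (and its partner Lemma~\ref{L: sharp commute 2}) to bubble the $\sharp_{g_m}$ move past the preceding sharp moves with larger $k$-values: whenever $\sharp_{g_{j}}$ produces $k_{j}$ and the subsequent $\sharp_{g_m}$ produces $k_m$ with $g_j<g_m$ and $k_j>k_m$, Lemma~\ref{L: sharp commute 1} lets us swap their order, and symmetrically when the inequality goes the other way we use Lemma~\ref{L: sharp commute 2}. Repeatedly applying these swaps, I would rewrite the composition $\sharp_{g_1}\circ\cdots\circ\sharp_{g_m}$ so that the operator producing $k_{\sigma(m)}$ (the largest $k$-value) is performed last — call the reordered sequence of $g$'s $g_{\tau(1)}<\cdots<g_{\tau(m)}$, noting the $g$-indices stay sorted because commuting only swaps adjacent operators whose $g$'s are out of their natural order, and one checks the final Kohnert diagram $D^m$ is unchanged. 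Now the operator applied last has $k$-value equal to $\max_i k_i=k_{\sigma(m)}$, so we are back in the easy case: $\flat_{k_{\sigma(m)}}$ inverts it, and the inductive hypothesis applies to the remaining $m-1$ sharp moves and $m-1$ flat moves.

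The main obstacle I anticipate is the bookkeeping needed to justify the reordering rigorously: I need to verify that the hypotheses of Lemmas~\ref{L: sharp commute 1} and~\ref{L: sharp commute 2} are actually met at each swap (in particular the ``$k_1>k_2$'' condition that makes the two $\sharp$'s commute), that after reordering all intermediate diagrams are still well-defined elements of $\KD(\alpha)$, and that the multiset $\{k_i\}$ and the final diagram $D^m$ are genuinely invariant under the reordering so that $\sigma$ is computed from the same data. A clean way to organize this is to prove a sublemma: \emph{if $\sharp_{g_1},\dots,\sharp_{g_m}$ are applied in increasing order of the $g$'s starting from $D^0$, all well-defined, and if $p$ is the index with $k_p$ maximal, then the same multiset of moves can be reapplied in the order $g_1<\cdots<\widehat{g_p}<\cdots<g_m$ followed by $g_p$ last, producing the same $D^m$ and the same $k$-values.} This reduces to finitely many adjacent transpositions, each handled by Lemma~\ref{L: sharp commute 1} or~\ref{L: sharp commute 2}, and one tracks that each transposition swaps two consecutive $g$'s with $k$-values in decreasing order, so a standard bubble-sort argument terminates. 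With the sublemma in hand, the inductive step above goes through, and combined with Lemma~\ref{L: sharp and flat are inverses} at each peeling step we obtain $T^0=D^0$ and the equality of the two sets $\{g_i\}=\{g_i'\}$.
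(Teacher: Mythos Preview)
Your approach is essentially the paper's: use the commutation lemma to reorder the $\sharp$'s so the $k$-values come out in increasing order, then invert each arrow via Lemma~\ref{L: sharp and flat are inverses}. The paper does this in one pass---a straight bubble sort on the $k$-sequence, swapping adjacent positions whenever $k_i>k_{i+1}$, and then reversing every arrow at once---rather than by your induction that peels off the largest $k$ at each step. Both organizations work.

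Two small points on your writeup. First, only Lemma~\ref{L: sharp commute 1} is needed, never Lemma~\ref{L: sharp commute 2}: at every adjacent swap you perform, the pair $(g_a,k_a)$ sitting to the left of $(g_b,k_b)$ has $k_a>k_b$ (that is why you swap) and necessarily $g_a<g_b$, because the $(g,k)$ pairs travel together and two pairs can only have crossed earlier if the left one had the larger $k$---so if they are now adjacent with the larger $k$ on the left, they have not yet crossed and the $g$'s are still in their original increasing order. Second, your remark that ``the $g$-indices stay sorted'' after commuting is false as stated: swapping $\sharp_{g_i}$ past $\sharp_{g_{i+1}}$ leaves them in the order $g_{i+1},g_i$. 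What \emph{is} true, and what your sublemma actually needs, is that after bubbling $\sharp_{g_p}$ to the far right the remaining $m-1$ operators are $\sharp_{g_1},\dots,\widehat{\sharp_{g_p}},\dots,\sharp_{g_m}$ in their original increasing order with the same $k$-outputs $k_1,\dots,\widehat{k_p},\dots,k_m$; this is exactly what repeated application of Lemma~\ref{L: sharp commute 1} gives, and it is what licenses the inductive hypothesis.
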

\begin{proof}
We may represent $D^0, \dots, D^m$
using the following diagram:
\[
D^0 \xrightarrow[k_1]{\sharp_{g_1}} D^1 \xrightarrow[k_2]{\sharp_{g_2}} D^2 \xrightarrow[k_3]{\sharp_{g_3}} \dots \xrightarrow[k_m]{\sharp_{g_m}} D^m\,.
\]
We put the operator above the arrow
and put the second output under the arrow.

Suppose we find $k_i > k_{i+1}$.
By Lemma~\ref{L: sharp commute 1},
we can swap the order of $\sharp_{g_i}$
and $\sharp_{g_{i+1}}$,
not affecting the last diagram $D^m$.
Thus, after sorting the output numbers
into increasing order, we have

\[
D^0 \xrightarrow[k_{\sigma(1)}]{\sharp_{g_{\sigma(1)}}} \tilde{D}^1 \xrightarrow[k_{\sigma(2)}]{\sharp_{g_{\sigma(2)}}} \tilde{D}^2 \xrightarrow[k_{\sigma(3)}]{\sharp_{g_{\sigma(3)}}} \dots \xrightarrow[k_{\sigma(m)}]{\sharp_{g_{\sigma(m)}}} D^m\,,
\]
where $\tilde{D}^i$ are some diagrams
in $\KD(\alpha)$.
Finally, we have 
\[
D^0 \xleftarrow[g_{\sigma(1)}]{\flat_{k_{\sigma(1)}}} \tilde{D}^1 \xleftarrow[g_{\sigma(2)}]{\flat_{k_{\sigma(2)}}} \tilde{D}^2 \xleftarrow[g_{\sigma(3)}]{\flat_{k_{\sigma(3)}}} \dots \xleftarrow[g_{\sigma(m)}]{\flat_{k_{\sigma(m)}}} D^m\,.
\]

By $T^m = D^m$, we have $D^0 = T^0$.
\end{proof}

\begin{exa} Consider $\alpha = (0,0,2,0,3,1,2)$, $g_1=3, g_2 = 5, g_3 = 6$ and $D^0 \in \KD(\alpha)$. Starting with $D^0$, we compute $\sharp_{g_1}, \sharp_{g_2}$ and then $\sharp_{g_3}$ to obtain $D^3$.
\[
D^0 = 
\raisebox{1cm}{
\begin{ytableau}
\cdot \\
\none & \cdot \\
\none \\
\cdot & \cdot & \cdot\\
\none \\
*(yellow)\cdot & \cdot \\
\cdot
\end{ytableau}}\xrightarrow[2]{\sharp_{3}}
\raisebox{1cm}{
\begin{ytableau}
\cdot \\
\none & \cdot \\
\none \\
*(yellow)\cdot & \cdot & \cdot\\
\cdot \\
\none & \cdot \\
\cdot
\end{ytableau}}\xrightarrow[4]{\sharp_{5}}
\raisebox{1cm}{
\begin{ytableau}
\cdot \\
\none & \cdot \\
\cdot \\
\none & \cdot & \cdot\\
\cdot \\
\none & \cdot \\
*(yellow)\cdot
\end{ytableau}} \xrightarrow[1]{\sharp_{6}}
\raisebox{1cm}{
\begin{ytableau}
\cdot \\
\cdot & \cdot \\
\cdot \\
\none & \cdot & \cdot\\
\cdot \\
\none & \cdot \\
\none
\end{ytableau}}
= D^3.
\]
We obtain $k_1 = 2, k_2 = 4$ and $k_3 = 1$, which are highlighted in the above figure. We can pick the permutation $\sigma$ with one-line notation $312$ and obtain $k_{\sigma(1)} = 1, k_{\sigma(2)} = 2$ and $k_{\sigma(3)} = 4$. Then we have $g_{\sigma(1)} = 6, g_{\sigma(2)} = 3$ and $g_{\sigma(3)} = 5$, which yields the following sequence of operation with the same $D^3$ as the final output.
\[
D^0 = 
\raisebox{1cm}{
\begin{ytableau}
\cdot \\
\none & \cdot \\
\none \\
\cdot & \cdot & \cdot\\
\none \\
\cdot & \cdot \\
*(yellow)\cdot
\end{ytableau}}\xrightarrow[1]{\sharp_{6}}
\raisebox{1cm}{
\begin{ytableau}
\cdot \\
\cdot & \cdot \\
\none \\
\cdot & \cdot & \cdot\\
\none \\
*(yellow)\cdot & \cdot \\
\none
\end{ytableau}}\xrightarrow[2]{\sharp_{3}}
\raisebox{1cm}{
\begin{ytableau}
\cdot \\
\cdot & \cdot \\
\none \\
*(yellow)\cdot & \cdot & \cdot\\
\cdot \\
\none & \cdot \\
\none
\end{ytableau}}\xrightarrow[4]{\sharp_{5}}
\raisebox{1cm}{
\begin{ytableau}
\cdot \\
\cdot & \cdot \\
\cdot \\
\none & \cdot & \cdot\\
\cdot \\
\none & \cdot \\
\none
\end{ytableau}} = D^3.
\]
Applying $\flat_4, \flat_2$ then $\flat_1$ on $D^3$, we will recover $D^0$.
\end{exa}
\begin{lemma}
\label{L: many sharp after many flat}
Let $D^0 = (K_1, \emptyset, d) \in \KD(\alpha)$, 
and $n >  e_1 > e_2 > \dots > e_m\geqslant 1$,
with $e_i \notin K_1$.
For $i = 1, 2, \dots, m$,
compute $\flat_{e_i}(D^{i-1}) = (D^i, k_i)$.
Assume $D^1, \dots, D^m$ are all well-defined.

Find the permutation $\sigma$
such that $k_{\sigma(1)} > \cdots > k_{\sigma(m)}$.
Now define $T^m = D^m$.
For $i = m, m-1, \dots, 1$,
compute $\sharp_{k_{\sigma(i)}}(T^i) 
= (T^{i - 1}, e_i')$.
Then $T^0 = D^0$ and $\{e_1, \dots, e_m\} =
\{ e_1', \dots, e_m'\}$.
\end{lemma}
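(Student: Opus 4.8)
\textbf{Proof proposal for Lemma~\ref{L: many sharp after many flat}.}

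The plan is to mirror the proof of Lemma~\ref{L: many flat after many sharp}, invoking the already-established commutation and inversion lemmas for $\flat$ and $\sharp$ rather than re-running any labeling-algorithm arguments. We represent the given data by the diagram
\[
D^0 \xrightarrow[k_1]{\flat_{e_1}} D^1 \xrightarrow[k_2]{\flat_{e_2}} D^2 \xrightarrow[k_3]{\flat_{e_3}} \cdots \xrightarrow[k_m]{\flat_{e_m}} D^m\,,
\]
with the operator written above each arrow and its second output below. The goal is to sort the sequence so that the output numbers $k_i$ appear in \emph{decreasing} order, then reverse each $\flat$ by the corresponding $\sharp$.

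First I would establish the sorting step. Suppose somewhere in the chain we see $k_i < k_{i+1}$, i.e.\ the output of $\flat_{e_i}$ is below the output of $\flat_{e_{i+1}}$ applied to $D^i$. Since $e_1 > e_2 > \dots > e_m$, we have $e_i > e_{i+1}$, so with the naming $k_1 = k_i$, $k_2 = k_{i+1}$ (after a harmless relabel) we are exactly in the hypothesis of Lemma~\ref{L: flat commute} — there $k_1 > k_2$ and $g_1 < g_2$ forces commutation; applying it with the roles $(k_1,k_2)\mapsto(e_{i+1},e_i)$ and $(g_1,g_2)\mapsto(k_{i+1},k_i)$ lets us swap $\flat_{e_i}$ and $\flat_{e_{i+1}}$ without changing $D^m$ and without changing the multiset of output numbers. (I would double-check the direction of the inequalities here: Lemma~\ref{L: flat commute} wants the \emph{larger} subscript first and the \emph{smaller} output first, so a swap is available precisely when the current order has a smaller subscript paired with a smaller-or-equal output appearing later, which is the case $k_i < k_{i+1}$ since $e_i > e_{i+1}$.) Finitely many such adjacent transpositions (a bubble sort) reorders the chain into
\[
D^0 \xrightarrow[k_{\sigma(1)}]{\flat_{e_{\sigma(1)}}} \tilde{D}^1 \xrightarrow[k_{\sigma(2)}]{\flat_{e_{\sigma(2)}}} \tilde{D}^2 \xrightarrow[]{} \cdots \xrightarrow[k_{\sigma(m)}]{\flat_{e_{\sigma(m)}}} D^m\,,
\]
where $k_{\sigma(1)} > \dots > k_{\sigma(m)}$ and the $\tilde{D}^i \in \KD(\alpha)$ are the intermediate diagrams, with final diagram still $D^m$.

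Next I would reverse the chain. By Lemma~\ref{L: sharp and flat are inverses}, each step $\tilde{D}^{i-1} \xrightarrow[k_{\sigma(i)}]{\flat_{e_{\sigma(i)}}} \tilde{D}^i$ is inverted by $\sharp_{k_{\sigma(i)}}(\tilde{D}^i) = (\tilde{D}^{i-1}, e_{\sigma(i)})$, giving
\[
D^0 \xleftarrow[e_{\sigma(1)}]{\sharp_{k_{\sigma(1)}}} \tilde{D}^1 \xleftarrow[e_{\sigma(2)}]{\sharp_{k_{\sigma(2)}}} \tilde{D}^2 \xleftarrow[]{} \cdots \xleftarrow[e_{\sigma(m)}]{\sharp_{k_{\sigma(m)}}} D^m\,.
\]
Since we applied $\sharp$ with subscripts $k_{\sigma(1)} > \dots > k_{\sigma(m)}$ in the order $i = m, m-1, \dots, 1$, this is exactly the recipe in the statement: $T^m = D^m$, $\sharp_{k_{\sigma(i)}}(T^i) = (T^{i-1}, e_i')$ yields $T^0 = D^0$, and the outputs $e_i'$ run through $\{e_{\sigma(1)}, \dots, e_{\sigma(m)}\} = \{e_1, \dots, e_m\}$ as required. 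I expect the main obstacle to be purely bookkeeping: matching the inequality conventions of Lemma~\ref{L: flat commute} and Lemma~\ref{L: sharp commute 2} to the present ordering $e_1 > \dots > e_m$ and verifying that a swap is always \emph{available} whenever the output numbers are out of the desired (decreasing) order — i.e.\ confirming that one never gets stuck with $k_i = k_{i+1}$ (impossible, since distinct $\flat$ operations on column~1 produce distinct output rows) or with the commutation hypothesis failing. Once the sorting step is justified, the inversion step is immediate from the pairwise inverse lemma, exactly as in Lemma~\ref{L: many flat after many sharp}.
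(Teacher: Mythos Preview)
Your proposal is correct and follows precisely the approach the paper intends: the paper's proof consists of the single sentence ``The proof is the same as the previous proof, using Lemma~\ref{L: flat commute} instead of Lemma~\ref{L: sharp commute 1},'' and that is exactly what you do. One bookkeeping slip: in matching variables to Lemma~\ref{L: flat commute} you write $(k_1,k_2)\mapsto(e_{i+1},e_i)$ and $(g_1,g_2)\mapsto(k_{i+1},k_i)$, but the correct assignment is $(k_1,k_2)\mapsto(e_i,e_{i+1})$ and $(g_1,g_2)\mapsto(k_i,k_{i+1})$ --- your subsequent parenthetical (``larger subscript first, smaller output first'') gets this right, so the argument itself is sound.
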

\begin{proof}
The proof is the same as the previous proof, using Lemma~\ref{L: flat commute}
instead of Lemma~\ref{L: sharp commute 1}.
\end{proof}

\section{Recursive descriptions of the maps}\label{section.recursive}
We have described our maps $\Psi_\alpha$ and $\Phi_\alpha$ 
via $\sharp_G(K)$ and $\flat_E(L)$ 
in section~\ref{section.maps}. These descriptions are  simple to state but 
hard to work with.
Now, we will describe the $\Psi_\alpha$ 
and $\Phi_\alpha$ recursively,
involving definitions from section~\ref{section.flat.sharp}
and section~\ref{section.recursive}.
Using the new alternative descriptions,
we can establish Lemma~\ref{L: Psi well-defined}, 
Lemma~\ref{L: Phi well-defined} and Theorem~\ref{T: Main}.

\subsection{Recursive description
of $\Psi_\alpha$}

Let $G$ be an arbitrary diagram.
First, we can recursively describe the operator $\sharp_G(\cdot)$.
If $D$ is empty, then $\sharp_G(D)$ is also 
empty if $G = \emptyset$,
or undefined otherwise.
If $D$ is not empty, 
write $D$ as $(K_1, \emptyset, d)$.
Let $G_{\geqslant 2}$ be the diagram $\{ (c-1, r): (c,r) \in G, c \geqslant 2\}$.
View $d$ as an element of $\KD(\overline{M(\alpha, K_1)})$
and find $d' = \sharp_{G_{\geqslant 2}}(d)$ recursively.
Let $G_1$ be the set $\{r: (1, r) \in G\}$
and assume $G_1 = \{ g_1 < \cdots < g_{|G_1|}\}$.
Let $D^0$ be the Kohnert diagram $(K_1, \emptyset, d')$.
Then compute $\sharp_{g_i}(D^{i-1}) = (D^i, k_i)$ for $1\leqslant i \leqslant |G_1|$.
The final output is $D^{|G_1|}$.

\begin{lemma}
The description is equivalent to the description of $\sharp_G(K)$ 
in subsection~\ref{prelim.kkd.to.rsvt}.
\end{lemma}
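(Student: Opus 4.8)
The plan is to show that both descriptions of $\sharp_G(D)$ produce the same Kohnert diagram by induction on the number of columns of $D$ (equivalently, on $\max(\alpha)$), with the recursive structure $D = (K_1, \emptyset, d)$ serving as the inductive unwinding. The original definition in subsection~\ref{prelim.kkd.to.rsvt} iterates through all cells of $G$ from right to left, and within a column from bottom to top; the recursive definition processes columns $\geqslant 2$ first (via $\sharp_{G_{\geqslant 2}}$ on $d$) and then column $1$ (via the composition $\sharp_{g_1}, \dots, \sharp_{g_{|G_1|}}$). So the core task is to argue that these two orders of processing the cells of $G$ yield the same result.

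First I would observe that the $\sharp_g$ operator from subsection~\ref{prelim.kkd.to.rsvt}, when restricted to a single cell $(c,r) \in G$, is exactly the operator $\sharp_g$ (with $g = r$) acting on the $c$-th column, in the sense that it searches for the largest $r' \leqslant r$ with $(c,r')$ a Kohnert cell such that raising it keeps the diagram in $\KD(\alpha)$. The key structural fact is that a $\sharp$-move on column $c \geqslant 2$ and a $\sharp$-move on column $1$ act on disjoint columns, and — crucially — the well-definedness condition for a $\sharp$-move on column $c$ of $D = (K_1, \emptyset, d)$ depends only on $d$ as an element of $\KD(\overline{M(\alpha, K_1)})$, by the recursive description of $\KD(\alpha)$ in Theorem~\ref{T: KKD image} (specialized to no ghost cells). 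Thus moves in columns $\geqslant 2$ never interact with moves in column $1$: performing the column-$1$ moves first and then the rest, or vice versa, gives the same answer, provided we also check that within each block the order matches. Within column $1$, both descriptions process $g_1 < \cdots < g_{|G_1|}$ in increasing order of row, so these agree directly. Within columns $\geqslant 2$, the recursive call $\sharp_{G_{\geqslant 2}}(d)$ is, by the inductive hypothesis, equal to the ``flat'' original description applied to $d$, which processes the cells of $G$ in columns $\geqslant 2$ from right to left and bottom to top — exactly matching the order the original description would use on $D$ for those cells.

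The one subtlety to address carefully is that the original description processes \emph{all} of $G$ in a single right-to-left, bottom-to-top sweep, so a column-$1$ cell at a low row is processed \emph{before} a column-$\geqslant 2$ cell at a high row. I would handle this by noting that a $\sharp$-move in column $1$ changes only the set $K_1$ (and hence which recursive target $\overline{M(\alpha, K_1)}$ the tail $d$ lives in), but does not change $d$ itself, while a $\sharp$-move in column $c \geqslant 2$ changes only $d$, not $K_1$. Hence the two families of moves genuinely commute as operations on the pair $(K_1, d)$, so any interleaving — in particular the single-sweep order of the original description versus the ``all columns $\geqslant 2$, then column $1$'' order of the recursive description — yields the same final diagram, as long as the relative order within each column is preserved, which it is. I would also verify the degenerate case where $D$ is empty: then $\sharp_G(D)$ is empty if $G = \emptyset$ and undefined otherwise, under both conventions, which is the base case of the induction.

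The main obstacle I expect is bookkeeping around well-definedness: I must ensure that the \emph{partial} nature of $\sharp_g$ is respected, i.e. that the original description returns ``undefined'' on exactly the same inputs as the recursive one. This requires checking that no reordering of the commuting moves can make an intermediate $\sharp_g$ fail in one order but succeed in the other. The cleanest way is to appeal to the fact that a column-$1$ move and a column-$\geqslant 2$ move act on disjoint data $(K_1$ versus $d)$ and neither affects the feasibility of the other — a column-$1$ move alters the ambient composition for $d$ from $\overline{M(\alpha, K_1)}$ to $\overline{M(\alpha, K_1')}$, but by Lemma~\ref{L: RSVT in another set} (and its $\KD$ analogue, the corollary $\RSSYT(\alpha) = \KD(\alpha)$ together with Corollary~\ref{C: KD RSVT order}) membership of $d$ in the relevant $\KD$ is monotone in the expected way — so I would isolate this into a short commutativity observation and then conclude by induction.
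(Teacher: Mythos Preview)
Your core reduction --- that a move at $(c,r)$ with $c\geqslant 2$ on $D=(K_1,\emptyset,d)\in\KD(\alpha)$ corresponds exactly to a move at $(c-1,r)$ on $d\in\KD(\overline{M(\alpha,K_1)})$, and that the column-$1$ moves are precisely the operators $\sharp_{g_1},\dots,\sharp_{g_{|G_1|}}$ --- is correct and is exactly what the paper does. But you have misread the iteration order in the original description. It says ``from right to left,'' and \emph{within each column} from bottom to top: column is the primary key. Hence every cell of $G$ in a column $c\geqslant 2$ is processed \emph{before} any cell of $G$ in column $1$. Your ``subtlety'' that ``a column-$1$ cell at a low row is processed before a column-$\geqslant 2$ cell at a high row'' is simply false; there is no interleaving to worry about, and the two descriptions process the cells of $G$ in the \emph{same} order.

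Consequently the commutativity argument you build is unnecessary, and as stated it also has a gap. A column-$1$ move changes $K_1$ to some $K_1'$, which changes the ambient target for $d$ from $\overline{M(\alpha,K_1)}$ to $\overline{M(\alpha,K_1')}$; a subsequent column-$\geqslant 2$ move then selects the highest cell whose raise keeps $d$ in $\KD(\overline{M(\alpha,K_1')})$, not $\KD(\overline{M(\alpha,K_1)})$. To get genuine commutativity you would need these two criteria to pick the \emph{same} cell, not merely that feasibility is monotone; the appeal to Lemma~\ref{L: RSVT in another set} and Corollary~\ref{C: KD RSVT order} does not give that. Once you correct the reading of the iteration order, this whole detour disappears and the proof collapses to the paper's two-line argument.
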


\begin{proof}
Recall that $\sharp_G(K)$ iterates over cells of $G$
from right to left. 
Within each column, it goes from bottom to top.
For a cell $(c,r) \in G$,
it picks the highest cell weakly below $(c,r)$ such that once 
this cell is raised to $(c,r)$, 
the diagram is still in $\KD(\alpha)$.
Then it moves the chosen cell to $(c,r)$.

Let $D = (K_1, \emptyset, d)$ be the Kohnert diagram
at the beginning of the iteration of $(c,r) \in G$.
Assume $c \geqslant 2$.
By the recursive description of $\KD(\alpha)$,
the following two statements are equivalent:
\begin{enumerate}
\item[$\bullet$] $(c, r')$ is a cell in $D$ such that if we move 
it to $(c,r)$,
the diagram is still in $\KD(\alpha)$.
\item[$\bullet$] $(c-1, r')$ is a cell in $d$ such that if we move 
it to $(c-1,r)$,
the diagram is still in $\KD(\overline{M(\alpha, K_1)})$.
\end{enumerate}
Thus, iterations of $(c,r) \in G$ with $c \geqslant 2$
will behave the same as if $\sharp_{G_\geqslant 2}$ acts 
on $d \in \KD(\overline{M(\alpha, K_1)})$.
Then iterations of $(1,g) \in G$
can be characterized by the $\sharp_g$ operator. 
\end{proof}

Now we can recursively describe the map $\Psi_\alpha$.
To make our description concise, 
we extend $\sharp_g(\cdot)$ to diagram pairs
$(K, G)$ such that $(K, \emptyset) \in \KD(\alpha)$ 
and $G$ has no cells in column 1.
The operator $\sharp_g(\cdot)$ acts
as if acting on $(K, \emptyset)$.

Now take $D = (K_1, G_1, d) \in \KKD(\alpha)$.
If $D$ is the empty pair, 
we have $\Psi_\alpha(D) = D$.  
Otherwise, 
let $t = \Psi_{\gamma}(d)$, 
where $\gamma = \overline{M(\alpha, K_1)}$.
Assume $G_1 = \{g_1 <  \dots < g_{|G_1|}\}$.
Let $D^0$ be the diagram pair $(K_1, \emptyset, t)$.
Then compute $\sharp_{g_i}(D^{i-1}) = (D^i, k_i)$
and write $D^i$ as $(K^i_1, \emptyset, t)$.
Finally, $\Psi_\alpha(D)$ is 
$(K^{|G_1|}_1, \{k_1, \dots, k_{|G_1|}\}, t)$.

\begin{exa} Consider $\alpha = (0,0,2,0,3,1,2)$.
Let $D = (K, G)$ be the following element in $\KKD(\alpha)$.

\[
D = 
\raisebox{1cm}{
\begin{ytableau}
\cdot \\
\X & \X \\
\X & \cdot\\
\cdot & \cdot & \cdot\\
\X \\
\cdot & \cdot \\
\cdot
\end{ytableau}}
\]
If we compute $\Psi_\alpha(D)$
using the description in subsection~\ref{prelim.kkd.to.rsvt},
we would go through the following iterations. 
\[
K = 
\raisebox{1cm}{
\begin{ytableau}
\cdot \\
\none \\
\none & *(yellow)\cdot\\
\cdot & \cdot & \cdot\\
\none \\
\cdot & \cdot \\
\cdot
\end{ytableau}}\xrightarrow[]{(2,6)}
\raisebox{1cm}{
\begin{ytableau}
\cdot \\
\none & \cdot \\
\none \\
\cdot & \cdot & \cdot\\
\none \\
*(yellow)\cdot & \cdot \\
\cdot
\end{ytableau}}\xrightarrow[]{(1,3)}
\raisebox{1cm}{
\begin{ytableau}
\cdot \\
\none & \cdot \\
\none \\
*(yellow)\cdot & \cdot & \cdot\\
\cdot \\
\none & \cdot \\
\cdot
\end{ytableau}}\xrightarrow[]{(1,5)}
\raisebox{1cm}{
\begin{ytableau}
\cdot \\
\none & \cdot \\
\cdot \\
\none & \cdot & \cdot\\
\cdot \\
\none & \cdot \\
*(yellow)\cdot
\end{ytableau}} \xrightarrow[]{(1,6)}
\raisebox{1cm}{
\begin{ytableau}
\cdot \\
\cdot & \cdot \\
\cdot \\
\none & \cdot & \cdot\\
\cdot \\
\none & \cdot \\
\none
\end{ytableau}}
= L.
\]
Thus, we have
\[
\Psi_\alpha(D) =
(L, (K \sqcup G) - L) = 
\raisebox{1cm}{
\begin{ytableau}
\cdot \\
\cdot & \cdot \\
\cdot & \X \\
\X & \cdot & \cdot\\
\cdot \\
\X & \cdot \\
\X
\end{ytableau}}
\]
Now we try our new recursive description. 
We may write $D$ as $(K_1, G_1, d)$,
where $K_1 = \{1,2,4,7\}, G_1 = \{3,5,6\}$
and $d$ is illustrated below.
Our new description would first
view $d$ as an element of $\KKD(\overline{M(\alpha, K_1)}) = 
\KKD((0,1,0,2,0,0,1))$
and send it to $t$:
\[
d = 
\raisebox{1cm}{
\begin{ytableau}
\none \\
\X \\
\cdot\\
\cdot & \cdot\\
\none \\
\cdot \\
\none 
\end{ytableau}}\xrightarrow[]{}
\raisebox{1cm}{
\begin{ytableau}
\none \\
\cdot \\
\X \\
\cdot & \cdot\\
\none \\
\cdot \\
\none 
\end{ytableau}} = t.
\]

It remains to perform $\sharp_3$, $\sharp_5$
and $\sharp_6$.

\[
D^0 = 
\raisebox{1cm}{
\begin{ytableau}
\cdot \\
\none & \cdot \\
\none \\
\cdot & \cdot & \cdot\\
\none \\
*(yellow)\cdot & \cdot \\
\cdot
\end{ytableau}}\xrightarrow[2]{\sharp_{3}}
\raisebox{1cm}{
\begin{ytableau}
\cdot \\
\none & \cdot \\
\none \\
*(yellow)\cdot & \cdot & \cdot\\
\cdot \\
\none & \cdot \\
\cdot
\end{ytableau}}\xrightarrow[4]{\sharp_{5}}
\raisebox{1cm}{
\begin{ytableau}
\cdot \\
\none & \cdot \\
\cdot \\
\none & \cdot & \cdot\\
\cdot \\
\none & \cdot \\
*(yellow)\cdot
\end{ytableau}} \xrightarrow[1]{\sharp_{6}}
\raisebox{1cm}{
\begin{ytableau}
\cdot \\
\cdot & \cdot \\
\cdot \\
\none & \cdot & \cdot\\
\cdot \\
\none & \cdot \\
\none
\end{ytableau}}
= D^3.
\]
Finally, the image is just 
$(\{3,5,6,7\}, \{1,2,4\}, t)$,
which agrees with the computation above.
\end{exa}

It is clear that this recursive description
agrees with the original description of $\Psi_\alpha$.
To prove Lemma~\ref{L: Psi well-defined},
we need to show $t, D^0, \dots, D^{|G_1|}$
exist and satisfy our assumptions. 
Besides, we need to check the final output
is a diagram pair in $\RSVT(\alpha)$.

\begin{proof}[Proof of Lemma~\ref{L: Psi well-defined}]
Prove by induction on $\max(\alpha)$.
We may assume $\Psi_\gamma$ is a well-defined map
from $\KKD(\gamma)$ to $\RSVT(\gamma)$,
where $\gamma = \overline{M(\alpha, K_1)}$
Thus, we know $t \in \RSVT(\gamma)$.

Then clearly if we ignore ghost cells in $D^0$,
it is in $\KD(\alpha)$. 
Moreover, $D^0$ has no ghost cells in column 1. 
Next, we need to show the diagram pairs $D^i$ are well-defined.
By Theorem~\ref{T: KKD image},
we know for each $g_i$, 
$|[g_i, n] \cap \supp(\alpha)| > |[g_i, n] \cap K_1|$.
Notice that the first $i-1$ iterations will not
move any cells above row $g_{i-1}$.
Thus, $[g_i, n] \cap K_1 = [g_i, n] \cap K^{i-1}_1$.
By Lemma~\ref{L: sharp well-defined},
$D^i$ exists. 

Next, we need to check 
the image is in $\RSVT(\alpha)$.
In other words, 
$T = (K^{|G_1|}, \{k_1, \dots, k_{|G_1|}\}, t)$
should satisfy all four conditions in Theorem~\ref{T: RSVT image}. 
Let $L_1'$ be the set of 
row indices of Kohnert cells in column 1 of $t$.
\begin{enumerate}
    \item The first condition is immediate.
    \item Since Kohnert cells of 
$D^{|G_1|} = (K_1^{|G_1|}, \emptyset, t)$ 
is in $\KD(\alpha)$,
we have $K_1^{|G_1|} \leqslant \supp(\alpha)$.
    \item For each $k_i$, we show
$|(k_i, n] \cap K_1^{|G_1|}| 
> |(k_i, n] \cap L_1'|$.
Since Kohnert cells of $D^{i-1}
= (K_1^{i-1}, \emptyset, t)$ 
is in $\KD(\alpha)$,
we know the Kohnert cells of $t$
is in $\KD(\overline{M(\alpha, K_1^{i-1}})$.
Thus, $L_1' \leqslant \supp(\overline{M(\alpha, K_1^{i-1}}) 
\subseteq K_1^{i-1}$.
We have
$$
|(k_i, n] \cap K_1^{i-1}| 
\geqslant |(k_i, n] \cap L_1'|.
$$
Since $K_1^{i}$ is obtained from $K_1^{i-1}$
by replacing $k_i$ with a larger number, 
we have $|(k_i, n] \cap K_1^{i}| =|(k_i, n] \cap K_1^{i-1}|+1
> |(k_i, n] \cap L_1'|$.
To obtain $K_1^{|G_1|}$,
we replace each of $k_{i+1}, \dots, k_{|G_1|}$
in $K_1^{i}$ with a larger number.
Therefore, $|(k_i, n] \cap K_1^{|G_1|}| 
> |(k_i, n] \cap L_1'|$.
\item Kohnert cells of $t$
is in $\KD(\overline{M(\alpha, K_1^{|G_1|})})$ and 
$t \in \RSVT(\overline{M(\alpha, K_1)})$.
By Lemma~\ref{L: RSVT in another set},
$t \in \RSVT(\overline{M(\alpha, K_1^{|G_1|})})$.
\end{enumerate}
\end{proof}

\subsection{Recursive description of $\Phi_\alpha$}

Let $E$ be an arbitrary diagram.
We can recursively describe the operator $\flat_E(\cdot)$.
If $D$ is empty, then $\flat_E(D)$ is also 
empty if $E = \emptyset$, or undefined otherwise.
If $D$ is not empty, 
write $D$ as $(L_1, \emptyset, t)$. Let $E_{\geqslant 2}$ be the diagram $\{ (c-1, r): (c,r) \in E, c \geqslant 2\}$.
Let $E_1$ be the set $\{r: (1, r) \in E\}$. Assume $E_1 = \{ e_1 > \cdots > e_{|E_1|}\}$.
Let $D^0$ be the Kohnert diagram $(L_1, \emptyset, t)$.
Then compute $\flat_{e_i}(D^{i-1}) = (D^i, g_i)$ for $1\leqslant i \leqslant |E_1|$ and write $D^i$ as $(L^i_1, \emptyset, t)$.
View $t$ as an element of 
$\KD(\overline{M(\alpha, L_1^{|E_1|})})$
and find $t' = \flat_{E_{\geqslant 2}}(t)$ recursively.
Finally, $\flat_E(D) = (L_1^{|E_1|}, \emptyset, t')$.

\begin{lemma}
The description is equivalent to the description of $\flat_E(L)$ 
in subsection~\ref{prelim.rsvt.to.kkd}.
\end{lemma}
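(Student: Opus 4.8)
The statement to prove is that the recursive description of $\flat_E(\cdot)$ just given agrees with the original column-by-column description of $\flat_E(L)$ from subsection~\ref{prelim.rsvt.to.kkd}. This is the mirror image of the lemma proved just above for $\sharp_G$, so the plan is to mimic that proof.

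\textbf{Overall approach.} Recall the original description: $\flat_E(L)$ iterates over cells of $E$ from left to right, and within each column from top to bottom; for a cell $(c,r) \in E$ it picks the lowest Kohnert cell weakly above $(c,r)$ in column $c$ such that dropping that cell to $(c,r)$ keeps the diagram in $\KD(\alpha)$, then performs that move. I would prove equivalence by tracking the state of the diagram through the iteration and showing the two procedures make identical moves in the same order. The key structural fact I would invoke is the recursive description of $\KD(\alpha)$ from Theorem~\ref{T: KKD image} (specialized to the no-ghost case, i.e.\ $\KD(\alpha)$), which says $(L_1, \emptyset, t) \in \KD(\alpha)$ iff $L_1 \le \supp(\alpha)$ and $t \in \KD(\overline{M(\alpha, L_1)})$.

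\textbf{Key steps in order.} First I would note that since the original procedure iterates over columns of $E$ from left to right, all moves in column $1$ happen before any move in columns $\ge 2$; this matches the recursive description, which performs the $\flat_{e_i}$ operations on column $1$ first and only then recurses into $t$ via $\flat_{E_{\ge 2}}$. Second, within column $1$: the original procedure processes the cells $(1, e_1), (1, e_2), \dots$ of $E_1$ from top to bottom, i.e.\ in the order $e_1 > e_2 > \dots > e_{|E_1|}$, and for each it drops the lowest admissible Kohnert cell of column $1$ down to that row --- which is exactly the definition of the $\flat_{e_i}$ operator applied to the current diagram. Here ``admissible'' means the resulting diagram stays in $\KD(\alpha)$, and this is precisely the condition built into $\flat_{e_i}$. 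Thus the sequence of column-$1$ moves of the original procedure is literally $\flat_{e_1}$, then $\flat_{e_2}$ on the result, and so on, matching $D^0 \to D^1 \to \cdots \to D^{|E_1|}$. Third, for the moves in columns $c \ge 2$: let $D = (L_1, \emptyset, t)$ be the diagram after all column-$1$ moves (so $L_1 = L_1^{|E_1|}$). I would argue, exactly as in the $\sharp_G$ lemma, that for a cell $(c, r') $ in $D$ with $c \ge 2$, dropping it to $(c,r)$ keeps the diagram in $\KD(\alpha)$ if and only if dropping the corresponding cell $(c-1, r')$ of $t$ to $(c-1, r)$ keeps $t$ in $\KD(\overline{M(\alpha, L_1)})$; this is immediate from the recursive description of $\KD$, since column-$1$ data $L_1$ is unchanged by moves in later columns. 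Hence the original procedure's behavior on cells of $E$ in columns $\ge 2$ coincides with running $\flat_{E_{\ge 2}}$ on $t \in \KD(\overline{M(\alpha, L_1^{|E_1|})})$, which is the recursive call. Assembling these three observations gives the equivalence, and an induction on the number of columns (or on $\max(\alpha)$) closes the recursion.

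\textbf{Main obstacle.} The only genuinely delicate point is the ``if and only if'' in the third step --- that admissibility of a column-$c$ move (for $c \ge 2$) in $D$ is equivalent to admissibility of the shifted move in $t$ relative to the \emph{smaller} composition $\overline{M(\alpha, L_1)}$. One must be careful that the moves already performed in column $1$ have fixed $L_1$, so the ambient composition governing columns $\ge 2$ is stable throughout the column-$\ge 2$ phase; and one must also confirm that a move confined to columns $\ge 2$ never changes which cells sit in column $1$, so it cannot retroactively invalidate the recursive hypothesis. Both facts follow directly from the definition of a $K$-Kohnert/Kohnert move (the moved cell stays in its own column) together with Theorem~\ref{T: KKD image}, so the argument is routine once set up --- indeed it is verbatim the proof of the corresponding $\sharp_G$ lemma with ``highest/above/raise'' replaced by ``lowest/below/drop'' and the iteration order reversed.
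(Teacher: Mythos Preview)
Your proposal is correct and follows essentially the same approach as the paper's own proof: both argue that the column-$1$ iterations are literally the $\flat_{e_i}$ operators applied in order, and then use the recursive description of $\KD(\alpha)$ (Theorem~\ref{T: KKD image} specialized to no ghosts) to identify admissibility of a move in column $c\ge 2$ of $D$ with admissibility of the shifted move in $t$ relative to $\KD(\overline{M(\alpha, L_1^{|E_1|})})$. Your explicit remark that $L_1$ is frozen throughout the column-$\ge 2$ phase is a point the paper leaves implicit, but otherwise the arguments coincide.
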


\begin{proof}
Recall that $\flat_E(L)$ iterates over cells of $E$
from left to right.  Within each column, it goes from top to bottom.
For a cell $(c,r) \in E$,
it picks the lowest cell weakly above $(c,r)$ such that once 
this cell is lowered to $(c,r)$, 
the diagram is still in $\KD(\alpha)$.
Then it moves the chosen cell to $(c,r)$.

Let $D = (L_1, \emptyset, t)$ be the Kohnert diagram
at the beginning of the iteration of $(c,r) \in E$.
The iterations of $(1,e) \in E$
can be characterized by the $\flat_e$ operator.
Assume $c \geqslant 2$.
By the recursive description of $\KD(\alpha)$,
the following two statements are equivalent for any $r'<r$:
\begin{enumerate}
\item[$\bullet$] $(c, r')$ is a cell in $D$ such that if we move 
it to $(c,r)$,
the diagram is still in $\KD(\alpha)$.
\item[$\bullet$] $(c-1, r')$ is a cell in $t$ such that if we move 
it to $(c-1,r)$,
the diagram is still in $\KD(\overline{M(\alpha, L_1^{|E_1|})})$.
\end{enumerate}
Thus, iterations of $(c,r) \in E$ with $c \geqslant 2$
will behave the same as if $\flat_{E_\geqslant 2}$ acts 
on $t \in \KD(\overline{M(\alpha, L_1^{|E_1|})})$.
\end{proof}

Now we can recursively describe the map $\Phi_\alpha$.
To make our description concise, 
we extend $\flat_e(\cdot)$ to diagram pairs
$(K, G)$ such that $(K, \emptyset) \in \KD(\alpha)$ 
and $G$ has no cells in column 1.
The operator $\flat_e(\cdot)$ acts
as if acting on $(K, \emptyset)$.

Now take $D = (L_1, E_1, t) \in \RSVT(\alpha)$.
If $D$ is the empty diagram pair, 
we have $\Phi_\alpha(D) = D$.  
Otherwise, assume $E_1 = \{e_1 >  \dots > e_{|E_1|}\}$.
Let $D^0$ be the diagram pair $(L_1, \emptyset, t)$.
Then compute $\flat_{e_i}(D^{i-1}) = (D^i, g_i)$
and write $D^i$ as $(L^i_1, \emptyset, t)$. 
Notice that $t \in \RSVT(\overline{M(\alpha, L_1)})$
and its Kohnert cells is in $\KD(\overline{M(\alpha, L^{|E_1|}_1)})$.
Thus, by Lemma~\ref{L: RSVT in another set}, 
we may view $t$ as an element of 
$\RSVT(\gamma)$, 
where $\gamma = \overline{M(\alpha, L^{|E_1|}_1)}$.
Let $d = \Phi_\gamma(t)$. 
Finally, $\Phi_\gamma(T) = (L^{|E_1|}_1, \{g_1, \dots, g_{|E_1|}\}, d)$.

\begin{exa} Consider $\alpha = (0,0,2,0,3,1,2)$.
Let $D = (L, E)$ be the following element in $\RSVT(\alpha)$.

\[
D = 
\raisebox{1cm}{
\begin{ytableau}
\cdot \\
\cdot & \cdot \\
\cdot & \X \\
\X & \cdot & \cdot\\
\cdot \\
\X & \cdot \\
\X
\end{ytableau}}
\]
If we want to compute $\Phi_\alpha(D)$
using the description in subsection~\ref{prelim.rsvt.to.kkd},
we would go through the following iterations. 
\[
L = 
\raisebox{1cm}{
\begin{ytableau}
\cdot \\
\cdot & \cdot \\
*(yellow)\cdot \\
\none & \cdot & \cdot\\
\cdot \\
\none & \cdot \\
\none
\end{ytableau}}
\xrightarrow[]{(1,4)}
\raisebox{1cm}{
\begin{ytableau}
\cdot \\
\cdot & \cdot \\
\none \\
\cdot & \cdot & \cdot\\
*(yellow)\cdot \\
\none & \cdot \\
\none
\end{ytableau}}
\xrightarrow[]{(1,2)}
\raisebox{1cm}{
\begin{ytableau}
\cdot \\
*(yellow) \cdot & \cdot \\
\none \\
\cdot & \cdot & \cdot\\
\none \\
\cdot & \cdot \\
\none
\end{ytableau}}
\xrightarrow[]{(1,1)}
\raisebox{1cm}{
\begin{ytableau}
\cdot \\
\none & *(yellow) \cdot \\
\none \\
\cdot & \cdot & \cdot\\
\none \\
\cdot & \cdot \\
\cdot
\end{ytableau}}
\xrightarrow[]{(2,5)}
\raisebox{1cm}{
\begin{ytableau}
\cdot \\
\none \\
\none & \cdot\\
\cdot & \cdot & \cdot\\
\none \\
\cdot & \cdot \\
\cdot
\end{ytableau}} = K.
\]
Thus, $\Phi_\alpha(D)$ is:
\[
(K, (L \sqcup E) - K) = 
\raisebox{1cm}{
\begin{ytableau}
\cdot \\
\X & \X \\
\X & \cdot\\
\cdot & \cdot & \cdot\\
\X \\
\cdot & \cdot \\
\cdot
\end{ytableau}}
\]
Now we try our new recursive description. 
We may write $D$ as $(L_1, E_1, t)$,
where $L_1 = \{3,5,6,7\}, E_1 = \{1,2,4\}$
and $t$ is illustrated below.
Our new description would first
perform $\flat_4, \flat_2$ 
and $\flat_1$ on $D^0$:

\[
D^0 = 
\raisebox{1cm}{
\begin{ytableau}
\cdot \\
\cdot & \cdot \\
*(yellow)\cdot & \X \\
\none & \cdot & \cdot\\
\cdot \\
\none & \cdot \\
\none
\end{ytableau}}
\xrightarrow[5]{\flat_4}
\raisebox{1cm}{
\begin{ytableau}
\cdot \\
\cdot & \cdot \\
\none & \X \\
\cdot & \cdot & \cdot\\
*(yellow)\cdot \\
\none & \cdot \\
\none
\end{ytableau}}
\xrightarrow[3]{\flat_2}
\raisebox{1cm}{
\begin{ytableau}
\cdot \\
*(yellow) \cdot & \cdot \\
\none & \X \\
\cdot & \cdot & \cdot\\
\none \\
\cdot & \cdot \\
\none
\end{ytableau}}
\xrightarrow[6]{\flat_1}
\raisebox{1cm}{
\begin{ytableau}
\cdot \\
\none & \cdot \\
\none & \X \\
\cdot & \cdot & \cdot\\
\none \\
\cdot & \cdot \\
\cdot
\end{ytableau}}
= D^3.
\]

Now, view $t$ as an element of $\KKD(\overline{M(\alpha, L_1^3)}) = 
\KKD((0,1,0,2,0,0,1))$
and send it to $d$:
\[
t = 
\raisebox{1cm}{
\begin{ytableau}
\none \\
\cdot \\
\X \\
\cdot & \cdot\\
\none \\
\cdot \\
\none 
\end{ytableau}}
\xrightarrow[]{}
\raisebox{1cm}{
\begin{ytableau}
\none \\
\X \\
\cdot\\
\cdot & \cdot\\
\none \\
\cdot \\
\none 
\end{ytableau}}= d.
\]
Finally, $\Phi_\alpha(D) =  
(\{1,2,4,7\}, \{3,5,6\}, d)$,
which agrees with the computation above.
\end{exa}

It is clear that this recursive description
agrees with the original description of $\Phi_\alpha$.
To prove Lemma~\ref{L: Phi well-defined},
we need to show $d, D^0, \dots, D^{|E_1|}$
exist and satisfy our assumptions. 
Moreover, we need to check the final output
is a diagram pair in $\KKD(\alpha)$.

\begin{proof}[Proof of Lemma~\ref{L: Phi well-defined}]
Prove by induction on $\max(\alpha)$.
We may assume $\Phi_\gamma$ a well-defined
map from $\RSVT(\gamma)$ to $\KKD(\gamma)$
for any $\gamma$ with
$\max(\gamma) < \max(\alpha)$.

Clearly $D^0$ is a diagram pair
whose Kohnert cells form a diagram in 
$\KD(\alpha)$
and has no ghost cells in column 1. 
Next, we need to show $D^i$ is well-defined for an arbitrary 
$1 \leqslant i \leqslant |E_1|$.
Notice that the first $i-1$ iterations will not
move any cells weakly below row $e_{i-1}$. 
Let $L_1'$ consists of row indices of Kohnert cells
in column 1 of $t$.
Thus,
\begin{align*}
|(e_{i-1},n] \cap L_1^{i-1}| = |(e_{i-1},n] \cap  L_1| 
 > |(e_{i-1},n] \cap  L_1'|,
\end{align*}
where the inequality
follows from Theorem~\ref{T: RSVT image}.
By Lemma~\ref{L: flat well-defined},
$D^i$ exists. 
Finally, by the inductive hypothesis, 
$d \in \KKD(\overline{M(\alpha, L^{|E_1|}_1)})$.

Next, we need to check 
the final image is in $\KKD(\alpha)$.
In other words, 
$(L^{|E_1|}, \{g_1, \dots, g_{|E_1|}\}, d)$
should satisfy all four conditions in Theorem~\ref{T: KKD image}.
\begin{enumerate}
    \item The first condition is immediate.
    \item Since Kohnert cells of 
$D^{|E_1|} = (L_1^{|E_1|}, \emptyset, t)$ 
is in $\KD(\alpha)$,
we have $L_1^{|E_1|} \leqslant \supp(\alpha)$.
    \item  For each $g_i$, we show
$|[g_i, n] \cap \,\supp(\alpha)| 
> |[g_i, n] \cap\, L^{|E_1|}_1|$. Since $(L_1,E_1,t)\in \RSVT(\alpha)$, we have $L_1 \leqslant \supp(\alpha)$. 
Since $L_1^{j}$ is obtained from $L^{j-1}_1$ by replacing $g_j$ with a smaller number,
$L_1^{|E_1|} \leqslant \cdots \leqslant L_1^0 \leqslant \supp(\alpha)$.
By Lemma~\ref{lem:subset_order}, 
$$|[g_i,n]\cap L_1^{|E_1|}| \leqslant \cdots \leqslant|[g_i,n]\cap L_1^0| \leqslant |[g_i,n]\cap \,\supp(\alpha)|.$$
Notice that
$|[g_i,n]\cap L^{i}_1| = |[g_i,n]\cap L^{i-1}_1|-1 $. 
Thus, $|[g_i, n] \cap\, L^{|E_1|}_1| 
< |[g_i, n] \cap \,\supp(\alpha)|$.
    \item This is checked above.
\end{enumerate}
\end{proof}

\subsection{Proof of Theorem~\ref{T: Main}}

In this subsection, we prove Theorem~\ref{T: Main}.

\begin{proof}[Proof of Theorem~\ref{T: Main}]
The maps clearly preserve $\wt(\cdot)$ and $\ex(\cdot)$.
To show they are mutually inverses, 
we only need to check the following two statements. 
\begin{enumerate}
    \item Take $D \in \KKD(\alpha)$.
    Let $T = \Psi_\alpha(D) \in \RSVT(\alpha)$.
    Then $\Phi_\alpha(T) = D$.
    \item Take $T \in \RSVT(\alpha)$.
    Let $D = \Phi_\alpha(D) \in \KKD(\alpha)$.
    Then $\Psi_\alpha(D) = T$.
\end{enumerate}

We only establish the first statement 
using Lemma~\ref{L: many flat after many sharp}. 
The second statement can be proved similarly
using Lemma~\ref{L: many sharp after many flat} instead.

We prove by induction on $\max(\alpha)$.
When $\max(\alpha) = 0$,
$D = (\emptyset, \emptyset)$ and our claim is immediate. 

Now assume $\max(\alpha) > 0$.
Let $D = (K_1, G_1, d)$. 
First, we compute $\Psi_\alpha(D)$
using our recursive description.
Let $t = \Psi_\gamma(d)$ where $\gamma = \overline{M(\alpha,K_1)}$.
Assume $G_1 = \{g_1< g_2< \dots <g_{|G_1|}\}$.
Let $D^0 = (K_1,\emptyset,t)$ and $\sharp_{g_i}(D^{i-1}) = (D^i,k_i)=((K_1^{i},\emptyset,t),k_i)$ 
for $i = 1, \dots, |G_1|$.
Then we know $D$ is sent to 
$T = (K_1^{|G_1|},E_1, t\})\in \RSVT(\alpha)$,
where $E_1 = \{k_1,\dots, k_{|G_1|}\}$.

Now we compute $\Phi_\alpha(T)$ 
using our recursive description. 
Now write $E_1$ as $\{e_1 > \dots > e_{|G_1|}\}$.
After applying $\flat_{e_1}, \dots, \flat_{e_{|G_1|}}$
on $(K_1^{|G_1|}, \emptyset, t)$,
by Lemma~\ref{L: many flat after many sharp}, 
the resulting diagram pair is $(K_1, \emptyset, t)$
and $G_1$ consists of the output numbers.
Finally, by the inductive hypothesis, 
$\Phi_\gamma(t) = d$.
Thus, $\Phi_\alpha(T) = (K_1, G_1, d) = D$.
\end{proof}

Now we have the desired weight-preserving bijection between $\KKD(\alpha)$ and $\RSVT(\alpha)$.
We can claim the Ross-Yong conjecture is correct.

\begin{cor}
The Lascoux polynomials indexed by $\alpha$, has a combinatorial formula with $\KKD(\alpha)$, i.e.,
\[
\fL^{(\beta)}_\alpha = \sum_{D \in \KKD(\alpha)}\beta^{\ex(D)}\bm{x}^{\wt(D)}\,.
\]
\end{cor}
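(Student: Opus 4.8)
The plan is to deduce the corollary directly from Theorem~\ref{T: Main} together with the tableau formula for $\fL^{(\beta)}_\alpha$ recorded in subsection~\ref{background.tableaux}. Recall that, by definition,
\[
\fL^{(\beta)}_\alpha = \sum_{T \in \RSVT(\alpha)} \beta^{\ex(T)} \bm{x}^{\wt(T)}.
\]
First I would invoke Theorem~\ref{T: Main}, which asserts that $\Psi_\alpha \colon \KKD(\alpha) \to \RSVT(\alpha)$ and $\Phi_\alpha \colon \RSVT(\alpha) \to \KKD(\alpha)$ are mutually inverse bijections and that both maps preserve $\wt(\cdot)$ and $\ex(\cdot)$. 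In particular, $\Psi_\alpha$ is a weight- and excess-preserving bijection from $\KKD(\alpha)$ onto $\RSVT(\alpha)$; equivalently, so is $\Phi_\alpha$ in the other direction.

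Next I would reindex the displayed sum over $\RSVT(\alpha)$ by the substitution $T = \Psi_\alpha(D)$, with $D$ ranging over $\KKD(\alpha)$. Since $\wt(\Psi_\alpha(D)) = \wt(D)$ and $\ex(\Psi_\alpha(D)) = \ex(D)$ by Theorem~\ref{T: Main}, the summand $\beta^{\ex(T)} \bm{x}^{\wt(T)}$ equals $\beta^{\ex(D)} \bm{x}^{\wt(D)}$, and bijectivity of $\Psi_\alpha$ ensures each $D \in \KKD(\alpha)$ contributes exactly once. This gives
\[
\fL^{(\beta)}_\alpha = \sum_{D \in \KKD(\alpha)} \beta^{\ex(D)} \bm{x}^{\wt(D)},
\]
which is precisely the asserted identity, i.e.\ the statement of Conjecture~\ref{Main conjecture}.

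I do not expect any genuine obstacle at this stage: all of the combinatorial substance has already been absorbed into Theorem~\ref{T: Main}, whose proof constitutes the bulk of the paper (the recursive descriptions of $\Psi_\alpha$ and $\Phi_\alpha$ in terms of the operators $\sharp_g$ and $\flat_e$, their well-definedness in Lemmas~\ref{L: Psi well-defined} and~\ref{L: Phi well-defined}, and the commutation Lemmas~\ref{L: many flat after many sharp} and~\ref{L: many sharp after many flat} used to verify that the two maps are inverse). Granting that theorem, the corollary is a one-line reindexing of a generating-function sum, so the write-up should be short; the only care needed is to cite the weight- and excess-preservation explicitly when matching the two sums term by term.
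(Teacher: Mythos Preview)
Your proposal is correct and mirrors the paper's own reasoning: the paper does not even write out a proof for this corollary, simply noting that the weight- and excess-preserving bijection of Theorem~\ref{T: Main} immediately yields the claimed formula. Your reindexing argument via $T = \Psi_\alpha(D)$ is exactly the intended one-line deduction.
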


\section{Acknowledgments}
The authors thank Sami Assaf, Brendon Rhoades 
and Mark Shimozono for helpful
conversations. 
%%%%%%%%%%%%%%%%%%%%%%%%%%%%%%%%%%%%%%%%%%%%%%%%%%%
\bibliographystyle{alpha}
\bibliography{citation}{}
\end{document}